\newtheorem{thm}{Theorem}[section]
\newtheorem{lem}[thm]{Lemma}
\newtheorem{cor}[thm]{Corollary}
\newtheorem{prob}[thm]{Problem}
\newtheorem{claim}[thm]{Claim}
\theoremstyle{definition}
\newtheorem{defin}[thm]{Definition}
\newtheorem{eg}[thm]{Example}
\newtheorem*{conventions}{Conventions}
\newtheorem*{notation}{Notation}
\newtheorem*{acknowledgment}{Acknowledgment}
\theoremstyle{remark}
\newtheorem{rem}[thm]{Remark}
\numberwithin{equation}{section}
\newcommand{\bC}{\mathbb{C}}
\newcommand{\bF}{\mathbb{F}}
\newcommand{\sO}{\mathscr{O}}
\newcommand{\bP}{\mathbb{P}}
\newcommand{\bQ}{\mathbb{Q}}
\newcommand{\bZ}{\mathbb{Z}}
\newcommand{\Sing}{\mathrm{Sing}}
\newcommand{\Supp}{\mathrm{Supp}}
\begin{document}

\title{Minimal compactifications of the affine plane with nef canonical divisors}


\author{}
\address{}
\curraddr{}
\email{}
\thanks{}

\author{Masatomo Sawahara}
\address{Faculty of Education, Hirosaki University, Bunkyocho 1, Hirosaki-shi, Aomori 036-8560, JAPAN}
\curraddr{}
\email{sawahara.masatomo@gmail.com}
\thanks{}

\subjclass[2020]{14J17, 14J26, 14R10, 15A06. }

\keywords{normal analytic surface, minimal compactification of the affine plane, numerically trivial canonical divisor, rational singularity. }

\date{}

\dedicatory{}
\begin{abstract}
We shall consider minimal analytic compactifications of the affine plane with singularities.
In previous work, Kojima and Takahashi proved that any minimal analytic compactification of the affine plane, which has at worse log canonical singularities, is a numerical del Pezzo surface (i.e., a normal complete algebraic surface with the numerically ample anti-canonical divisor) and has only rational singularities. 
In this article, we show that any minimal analytic compactification of the affine plane with the nef canonical divisor has an irrational singularity. 
\end{abstract}
\maketitle
\setcounter{tocdepth}{1}
Throughout this article, we work over the complex number field $\bC$. 
\section{Introduction}\label{1}
Let $X$ be a normal compact analytic surface and let $\Gamma$ be a closed (analytic) curve on $X$. 
Then we say that $(X,\Gamma )$ is a {\it minimal compactification} of the affine plane $\bC ^2$ if $X \backslash \Gamma$ is biholomorphic to $\bC ^2$ and $\Gamma$ is irreducible (see also Definition \ref{def cpt'n}). 
This article is devoted to a discussion of minimal compactifications of the affine plane. 
{\cite{RV60}} proved that if $(X,\Gamma )$ is a minimal compactificaiton of $\bC ^2$ and $X$ is smooth then $X$ is the projective plane $\bP ^2$ and $\Gamma$ is a line on $\bP ^2$. 
Hence, we consider minimal compactifications of the affine plane with singular points in what follows. 

In previous works, minimal compactifications of $\bC ^2$, which are normal hypersurfaces in $\bP ^3$, have been studied by {\cite{Fur97}} and {\cite{Oht01}}. 
Moreover, minimal compactifications of $\bC ^2$ with at most log canonical singularities have been studied. 
More precisely, minimal compactifications of $\bC ^2$ with at most Du Val singularities (resp.\ log terminal singularities, log canonical singularities) were studied by {\cite{MZ88}, etc. (resp.\ {\cite{Koj01b,KT18}}, {\cite{KT09}}) (see also {\cite{Saw22}} for the related work). 
In particular, the following result is summarized in {\cite{KT09}}: 
\begin{thm}[{\cite[Theorem 1.1 (1) and (2)]{KT09}}]\label{KT}
Let $(X,\Gamma )$ be a minimal compactification of $\bC ^2$. 
Assume that $X$ has at worse log canonical singularities and $\Sing (X) \not= \emptyset$. 
Then: 
\begin{enumerate}
\item $X$ is a numerical del Pezzo surface (see \S \S \ref{2-2}, for this definition) of Picard rank one. In particular, $K_X$ is not nef. 
\item All singularities of $X$ are rational. 
\end{enumerate}
\end{thm}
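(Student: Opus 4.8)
The plan is to pass to the minimal resolution $\sigma \colon V \to X$ and study the induced smooth compactification of the affine plane. Since $X \setminus \Gamma \cong \bC^2$ is smooth, every singular point lies on $\Gamma$, so the exceptional locus of $\sigma$ lies over $\Gamma$; writing $D := \sigma^{-1}(\Gamma)_{\red}$ one gets $V \setminus D \cong \bC^2$, and after blowing up so that $D$ has simple normal crossings I would record the classical structure of such a boundary: $D$ is connected, each component is a rational curve, and its dual graph is a tree. The underlying input is that $\bC^2$ is contractible, so a Mayer--Vietoris / Euler-characteristic computation gives $\chi(D) = r+1$, where $r$ is the number of components of $D$, which is exactly the tree condition. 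I would also note at the outset that $X$ is projective: it is Moishezon, since it contains $\bC^2$ and hence carries two algebraically independent meromorphic functions.

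For part (2) I would argue cohomologically rather than through the local classification of singularities, since a tree of rational curves in the resolution does not by itself force rationality (minimally elliptic singularities furnish tree-shaped but non-rational examples). As $V$ is a rational surface, $H^1(V,\sO_V) = H^2(V,\sO_V) = 0$, so the Leray spectral sequence for $\sigma$ yields $H^0\!\big(X, R^1\sigma_*\sO_V\big) \cong H^2(X,\sO_X)$. The sheaf $R^1\sigma_*\sO_V$ is supported on $\Sing(X)$ and vanishes exactly at the rational singularities, so it suffices to prove $H^2(X,\sO_X) = 0$. By Serre duality on the normal projective surface $X$ this group is $H^0(X,\omega_X)^\vee$, and a global section of the dualizing sheaf $\omega_X$ restricts to a holomorphic $2$-form on $X\setminus\Gamma\cong\bC^2$, i.e.\ to $f\,dx\wedge dy$ with $f$ a polynomial; but such a form acquires a pole along $\Gamma$ at infinity (already $dx\wedge dy$ does, as on $\bP^2$), so it cannot extend. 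Hence $H^0(X,\omega_X)=0$, every singularity is rational, and in particular $X$ is $\bQ$-factorial (rational surface singularities have finite local class group).

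For part (1) I would first obtain $\rho(X)=1$. By excision $\mathrm{Cl}(X)_\bQ$ is generated by the class of the single boundary curve $\Gamma$ (as $\mathrm{Cl}(\bC^2)=0$), so its rank is at most one; because $X$ is $\bQ$-factorial we have $\mathrm{Num}(X)_\bQ = \mathrm{Cl}(X)_\bQ$, and since $X$ is projective the rank is exactly one. Thus $\mathrm{Num}(X)_\bQ = \bQ[\Gamma]$ with $\Gamma$ an effective curve of positive self-intersection, so $\Gamma$ spans the ample ray.

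It remains to prove that $-K_X$ is numerically ample, i.e.\ $K_X\cdot\Gamma<0$. Since $X$ is rational we have $\kappa(X)=-\infty$, so $K_X$ is not pseudo-effective; as the pseudo-effective cone of $X$ is the ray $\bR_{\ge 0}[\Gamma]$ and $K_X$ is $\bQ$-Cartier, writing $K_X\equiv t\,\Gamma$ forces $t<0$, whence $-K_X$ is numerically ample and in particular not nef. I expect this final step to be the main obstacle: on the singular surface $X$ one must justify non-pseudo-effectivity of $K_X$ and, crucially, exclude the borderline case $K_X\equiv 0$, and doing so cleanly requires more than $\rho(X)=1$. The natural tool is the affine ruling of $\bC^2$: the projection $\bC^2\to\bC$ closes up to a $\bP^1$-fibration on a model of $V$, producing a covering family of rational curves $C$ with $K_X\cdot C<0$, which is proportional to $\Gamma$. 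Translating this soft picture into the strict numerical inequality, while keeping track of the discrepancies in $K_V=\sigma^*K_X+\sum d_i E_i$, is where the finer structure theory of open rational surfaces genuinely enters.
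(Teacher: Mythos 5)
First, a point of comparison: the paper does not prove this statement at all --- it is quoted from {\cite{KT09}} --- so your proposal can only be judged on its own terms, and it has a fatal gap in part (2). The clearest symptom is that your argument for (2) never uses the log canonical hypothesis: if it were correct it would show that \emph{every} minimal compactification of $\bC^2$ has only rational singularities, contradicting Theorem \ref{main} of this paper and the explicit Examples \ref{eg(5-1)(1)} and \ref{eg(5-1)(2)}. The false step is the assertion that a global section of $\omega_X$ restricts on $X\setminus\Gamma\cong\bC^2$ to a form $f\,dx\wedge dy$ which ``cannot extend'' across $\Gamma$. Since $\omega_X=j_*\omega_{X_{\mathrm{sm}}}$, regularity of such a section is a condition only on the smooth locus; pulled back to the minimal resolution $V$ it becomes a rational $2$-form that may have poles of arbitrary order along the exceptional curves (all of which lie over points of $\Gamma$), and these poles absorb the pole that $dx\wedge dy$ would otherwise be forced to have along the proper transform of $\Gamma$. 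Concretely, in Example \ref{eg(5-1)(1)} one has $K_V+D^{\sharp}\equiv 0$ with $D^{\sharp}$ an effective integral divisor supported on the exceptional locus (Claim \ref{claim(0)}); since $V$ is rational this gives $K_V+D^{\sharp}\sim 0$, hence a nonzero section of $\omega_X$ restricting to a nonzero multiple of $dx\wedge dy$, so $H^0(X,\omega_X)\neq 0$, $H^2(X,\sO_X)\neq 0$, and the singularity is irrational --- exactly as your (correct) Leray reduction predicts. The log canonical hypothesis is precisely what must forbid such a $D^{\sharp}$, and your argument gives it no role.

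Part (1) has the same structural problem at the step you yourself flag as the main obstacle. The inference ``$X$ rational $\Rightarrow\kappa(X)=-\infty\Rightarrow K_X$ not pseudo-effective'' fails for normal singular surfaces: plurigenera formed from the reflexive powers $\omega_X^{[m]}$ are not birational invariants, and the surfaces of Examples \ref{eg(5-1)(1)} and \ref{eg(5-1)(2)} are rational with $K_X\equiv 0$, hence $\kappa(X)=0$. So neither the borderline case $K_X\equiv 0$ nor $K_X$ ample can be excluded by soft arguments; one has to run the boundary of the minimal resolution through the classification of minimal normal compactifications ({\cite{Mor73}}) and the structure of the dual graphs as in Section \ref{3}, which is how {\cite{KT09}} actually proceeds and where the log canonical condition finally bites. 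The sound pieces of your proposal (projectivity of $X$, the tree structure of the boundary, $\rho(X)=1$ via excision, and the reduction of rationality of singularities to $H^2(X,\sO_X)=0$) are all correct, but the two decisive steps are not, so as it stands this is not a proof.
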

In this article, we consider minimal compactifications of $\bC ^2$ with singularities worse than log canonical singularities. 
Note that for such a minimal compactification of $\bC ^2$ either the anti-canonical divisor is numerically ample or the canonical divisor is nef; indeed, its Picard rank is equal to one. 
Our main result is stated as follows: 
\begin{thm}\label{main}
Let $(X,\Gamma )$ be a minimal compactification of the affine plane $\bC ^2$. 
If the canonical divisor $K_X$ is nef, then $X$ has an irrational singular point. 
\end{thm}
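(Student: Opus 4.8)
The plan is to produce the irrational singularity explicitly from a ruling on the compactification. First I would record the standard structural reductions. Since $(X,\Gamma)$ is a minimal compactification, $X$ has Picard rank one, so $\Gamma$ is ample and $K_X\equiv\lambda\Gamma$ for some $\lambda\in\bQ$; here $K_X$ nef means precisely $\lambda\ge0$, and all singular points of $X$ lie on $\Gamma$ because $X\setminus\Gamma\cong\bC^2$ is smooth. Let $\pi\colon V\to X$ be the minimal resolution. Then $V$ is a smooth projective rational surface containing $\bC^2$, and the reduced total transform $D=\tilde\Gamma+\sum_iE_i$ of $\Gamma$ is a simple normal crossing divisor whose dual graph is a tree of smooth rational curves (as $\bC^2$ is contractible). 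In particular every exceptional curve is rational, so an irrational singularity of $X$ can only be detected through Artin's criterion, i.e.\ through the positivity of the arithmetic genus $p_a(Z)$ of the fundamental cycle $Z$, and not through the presence of a non-rational exceptional component.

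Next I would exploit $\bar\kappa(\bC^2)=-\infty$. This yields an $\bA^1$-fibration on $\bC^2=\bA^2$ which, on a suitable smooth model $\sigma\colon W\to X$ dominating $V$, extends to a $\bP^1$-fibration $f\colon W\to\bP^1$; I would normalize the choice so that a general fibre $F$ meets $\sigma^{-1}(\Gamma)$ in a single point. Then $\sigma^{-1}(\Gamma)$ decomposes into one horizontal section $H$ (with $F\cdot H=1$) together with vertical curves, all contained in the single reducible fibre $f^{-1}(\infty)$; in particular $\tilde\Gamma$ is vertical and there are no reducible fibres over $\bA^1$. Writing $K_W=\sigma^*K_X+\sum_ja_jE_j$ and using that every $\sigma$-exceptional curve lies in $\sigma^{-1}(\Gamma)$, so that $F$ meets the exceptional locus only along $H$, the nef hypothesis gives $0\le\sigma^*K_X\cdot F=-2-\sum_ja_j(E_j\cdot F)=-2-a_H$, whence $a_H\le-2$. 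Thus $H$ is contracted by $\sigma$ to a singular point $p\in\Gamma$ of discrepancy at most $-2$; in particular $p$ is not log canonical. Note that only nef-ness is used here, so the cases $K_X\equiv0$ and $K_X$ ample are handled uniformly, and this step is consistent with Theorem \ref{KT}, which forbids $\lambda\ge0$ under only log canonical singularities.

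It remains to upgrade ``$p$ is non-log-canonical'' to ``$p$ is irrational'', and this is the step I expect to be the main obstacle. The exceptional set $\mathcal E_p=\sigma^{-1}(p)$ consists of the section $H$ together with vertical rational curves lying in $f^{-1}(\infty)$ and attached to $H$ at the point $H\cap f^{-1}(\infty)$. The heart of the argument is to compute the fundamental cycle $Z_p$ of $\mathcal E_p$ and to show $p_a(Z_p)>0$, so that Artin's criterion certifies $p$ as an irrational singularity. Here I would impose three constraints simultaneously: the absence of reducible fibres over $\bA^1$ (which confines every vertical component of $D$ to $f^{-1}(\infty)$ and rigidly controls the fibre at infinity), the simple normal crossing tree shape of $D$, and the inequality $a_H\le-2$, which forces $H$ to be sufficiently negative. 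Concretely I would pass to a relatively minimal model of $f$ by contracting superfluous vertical $(-1)$-curves, carefully tracking the effect on $H$, on $\tilde\Gamma$, and on the discrepancies, thereby reducing $f^{-1}(\infty)$ to a short list of admissible configurations; in each case I would then verify $p_a(Z_p)>0$ by a direct adjunction computation. The delicate point throughout is that a tree of rational curves need not be rational in Artin's sense, so the combinatorial bookkeeping of the multiplicities in $Z_p$ against the negativity supplied by $a_H\le-2$ is exactly what must be carried out, rather than being inferred from the failure of log canonicity alone.
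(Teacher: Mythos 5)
Your first two steps are sound and give a clean, independent derivation that some singular point of $X$ fails to be log canonical: the general fibre $F$ of the extended $\bP^1$-fibration satisfies $0\le(\sigma^{\ast}K_X\cdot F)=-2-a_H$, so the unique horizontal boundary component $H$ must be $\sigma$-exceptional with discrepancy $a_H\le-2$. This recovers (essentially the contrapositive of) Theorem \ref{KT}~(1), and it is a genuinely different route from the paper, which never introduces a fibration. However, as you yourself flag, non-log-canonical does not imply irrational, and the entire content of Theorem \ref{main} lies in producing an effective cycle $Z$ supported on the exceptional set with $p_a(Z)>0$. That step is only announced in your proposal, not carried out, and the plan you sketch for it --- pass to a relatively minimal model of $f$ and reduce $f^{-1}(\infty)$ to ``a short list of admissible configurations'' to be checked case by case --- is not viable: by Lemma \ref{lem(3-3)} the boundary dual graphs form an infinite family with an unbounded number of branch points and twigs of unbounded length (see Examples \ref{eg(5-1)(1)} and \ref{eg(5-1)(2)}, which already depend on a parameter $m$), so no finite enumeration of fibre configurations closes the argument.

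What actually fills this gap in the paper is an arithmetic mechanism your proposal does not touch. When $K_X\equiv0$ one has $K_V+D^{\sharp}\equiv0$, and the candidate cycle is $Z=D^{\sharp}|_E$ itself; the work is to prove that the coefficients $\alpha_i$ are \emph{integers}, which is done by showing that the determinants $d(A_i)$, $d(B_i)$ of complementary subgraphs are coprime (Lemma \ref{lem(2-3-2)}, resting on $d=-1$ for boundaries of compactifications of $\bC^2$) and then propagating integrality along the graph (Claims \ref{claim(5-1)}--\ref{claim(5-4)}); adjunction then gives $p_a(Z)=\tfrac12(Z\cdot K_V+D^{\sharp})+1=1$. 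When $K_X$ is numerically ample there is no such identity available on $V$, and the paper instead proves a monotonicity statement for $(C\cdot D^{\sharp})$ under the contractions of Section \ref{4} (Lemmas \ref{lem(4-1)}--\ref{lem(4-3)}) to reach a model with $K_{\widetilde X}\equiv0$, and transports the integral cycle back via the explicit formula (\ref{Z}). Your proposal contains no substitute for either the integrality argument or the reduction from the ample case to the trivial one, so as it stands it proves only the weaker statement that $X$ has a non-log-canonical singular point.
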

For a normal complete rational algebraic surface $X$, all singularities of $X$ are rational provided that $-K_X$ is numerically ample (see {\cite{Gri98}} or {\cite{Koj01a}}). 
Hence, we obtain the following corollary: 
\begin{cor}
Let $(X,\Gamma )$ be a minimal compactification of the affine plane $\bC ^2$ such that $X$ is a normal complete algebraic surface. 
Then all singularities of $X$ are rational if and only if $X$ is a numerical del Pezzo surface. 
\end{cor}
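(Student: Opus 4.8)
The plan is to deduce the corollary from Theorem \ref{main} together with the cited rationality result and the fact that a minimal compactification of $\bC ^2$ has Picard rank one. The substantive input is Theorem \ref{main}; the corollary merely repackages it into a clean equivalence, so I expect the argument to be short once the relevant dichotomy is set up carefully.

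First I would record two structural facts. Since $X$ is a normal complete algebraic surface containing $\bC ^2 = X \backslash \Gamma$ as a dense open subset, $X$ is a rational surface. Moreover, as recalled in the introduction, such a minimal compactification has Picard rank one; hence $\mathrm{NS}(X) \otimes \bR$ is one-dimensional and spanned by the class of a numerically ample divisor, so $K_X$ is numerically proportional to that generator. Consequently exactly one of the following holds: either $-K_X$ is numerically ample, i.e.\ $X$ is a numerical del Pezzo surface, or $K_X$ is nef. These two cases are mutually exclusive, since numerical ampleness of $-K_X$ forces $K_X \cdot H < 0$ for an ample $H$, whereas nefness of $K_X$ gives $K_X \cdot H \geq 0$. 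This dichotomy is the backbone of the proof.

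For the implication that a numerical del Pezzo surface has only rational singularities, I would invoke the cited result ({\cite{Gri98}} or {\cite{Koj01a}}): as $X$ is rational, normal, complete, and $-K_X$ is numerically ample, all of its singularities are rational. For the converse I would argue by contraposition. If $X$ is not a numerical del Pezzo surface, then by the dichotomy $K_X$ is nef, and Theorem \ref{main} produces an irrational singular point; hence $X$ does not have only rational singularities. Combining the two implications yields the stated equivalence.

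The only real obstacle is justifying the dichotomy rigorously, namely that Picard rank one genuinely forces $K_X$ to be either nef or numerically anti-ample with no intermediate possibility; this rests on the one-dimensionality of the numerical class group together with the fact that the ample cone is then a half-line. Everything else reduces to direct citations, so no further computation is required.
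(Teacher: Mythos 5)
Your argument is correct and is essentially the paper's own (the paper derives the corollary immediately from Theorem \ref{main}, the cited rationality result of {\cite{Gri98}}/{\cite{Koj01a}}, and the Picard-rank-one dichotomy stated in the introduction). The only point worth noting is that the dichotomy you flag as the "only real obstacle" is exactly the one the paper already asserts and uses, and your justification of it via the one-dimensionality of the numerical class group is the intended one.
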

\subsection*{Organization of article}
In Section \ref{2}, we review definitions of compactifications of the affine plane and prepare some basic notations and facts in order to prove the main result. 
In Section \ref{3}, we deal with some results of minimal compactifications of $\bC ^2$. 
We then refer to {\cite{KT09}} (see also {\cite{Koj01b}}). 
In Section \ref{4}, we observe birational transformations of a compactification $(V,C+D)$ of $\bC ^2$, whose $C$ is a $(-1)$-curve and $D$ contains a branching component, induced by a minimal compactification $(X,\Gamma )$ of $\bC ^2$ by the minimal resolution. 
Especially, assuming that $K_X$ is numerically ample, we show that there exists a birational morphism $\widetilde{f}:V \to \widetilde{V}$ between smooth projective surfaces such that $(\widetilde{V}, \widetilde{C} + \widetilde{D})$ is a compactification of $\bC ^2$ and corresponds to a minimal compactification $(\widetilde{X},\widetilde{\Gamma})$ of $\bC ^2$ with $K_{\widetilde{X}} \equiv 0$ by the contraction of $\widetilde{D}$, where $\widetilde{C}$ is a unique $(-1)$-curve on $\widetilde{f}_{\ast}(D)$ and $\widetilde{D} := \widetilde{f}_{\ast}(D) - \widetilde{C}$. 
In Section \ref{5}, we will prove Theorem \ref{main}. 
In other words, letting $(X,\Gamma )$ be a minimal compactification of $\bC ^2$ with the nef canonical divisor $K_X$, we show that $X$ has an irrational singular point by using arguments in Section \ref{3} and Section \ref{4}. 
We shall briefly outline this proof. 
Let $(V,C+D)$ be a compactification of $\bC ^2$ induced by $(X,\Gamma )$ by the minimal resolution. 
Note that there is a unique connected component $E$ of $D$ with a branching component (see Lemma \ref{lem(3-3)}). 
At first, assuming that $K_X$ is numerically trivial, we find an effective $\bZ$-divisor $Z$ on $V$ such that $\Supp (Z) = \Supp (E)$ and $K_V + Z \equiv 0$; in particular, $p_a(Z) = 1$. 
Meanwhile, assuming that $K_X$ is numerically ample, we construct an effective $\bZ$-divisor $Z$ on $V$ satisfying $\Supp (Z) = \Supp (E)$ and $p_a(Z) = 1$ by using a birational morphism $\widetilde{f}:V \to \widetilde{V}$ as above. 
By these considerations combined with Artin's result ({\cite{Art62}}), we obtain a connected component of $D$, which is contracted to an irrational singular point. 
Finally, Section \ref{6} presents some problems. 
\begin{conventions}
A reduced effective divisor $D$ on a smooth projective surface is called an {\it SNC-divisor} if $D$ has only simple normal crossings. 

For any weighted dual graph, a vertex $\circ$ with the weight $m$ corresponds to an $m$-curve (see also the following Notation).  Exceptionally, we omit this weight (resp.\ we omit this weight and use the vertex $\bullet$ instead of $\circ$) if $m=-2$ (resp.\ $m=-1$). 
\end{conventions}
\begin{notation}
We will use the following notations: 
\begin{itemize}
\item $K_X$: the canonical divisor of a normal surface $X$. 
\item $\varphi ^{\ast}(D)$: the total transform of a divisor $D$ by a morphism $\varphi$. 
\item $\psi _{\ast}(D)$: the direct image of a divisor $D$ by a morphism $\psi$. 
\item $(D \cdot D')$: the intersection number of two divisors $D$ and $D'$ (with Mumford's rational intersection number (cf. {\cite{Mum61,Sak84}}). 
\item $(D)^2$: the self-intersection number of a divisor $D$ (with Mumford's rational intersection number (cf. {\cite{Mum61,Sak84}}). 
\item $D_1 \equiv D_2$: $D_1$ and $D_2$ are numerically equivalent. 
\item $\bF _m$: the Hirzebruch surface of degree $m$, i.e., $\bF _m \simeq \bP (\sO _{\bP ^1} \oplus \sO _{\bP ^1}(m))$. 
\item $m$-curve: a smooth projective rational curve with seif-intersection number $m$. 
\end{itemize}
\end{notation}
\begin{acknowledgment}
The author would like to deeply thank Professor Hideo Kojima for suggesting the problem and for his valuable comments. 
Also, he is grateful to the referee for suggesting many useful comments.
The author is supported by the Foundation of Research Fellows, The Mathematical Society of Japan. 
\end{acknowledgment}
\section{Preliminaries}\label{2}
\subsection{Contractility of divisors on smooth projective surfaces}
Let $V$ be a smooth projective surface, and let $D$ be a connected reduced divisor on $V$. 
We quickly review the contractility condition by analytic means of $D$. 
\begin{defin}
Let the notation be the same as above. 
Then $D$ can be {\it contracted} if there exists a proper surjective morphism $\pi :V \to X$ to a normal analytic surface such that $\pi (D)$ is a point and $V \backslash \Supp (D)$ is isomorphic to $X \backslash \{ \pi (D)\}$ as an analytic space. 
\end{defin}
\begin{lem}\label{cont}
With the same notation as above, $D$ can be contracted if and only if the intersection matrix of $D$ is negative definite. 
\end{lem}
\begin{proof}
This assertion follows from {\cite{Mum61}} and {\cite{Gra62}}. 
\end{proof}
\begin{lem}\label{rational sing}
With the same notation as above, assume further that $D$ can be contracted. 
Then $D$ is contracted to a rational singular point if and only if for every non-zero effective $\bZ$-divisor $Z$ on $V$ with $\Supp (Z) = \Supp (D)$, then $p_a(Z) \le 0$. 
\end{lem}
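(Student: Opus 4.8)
The plan is to reduce the statement to Artin's cohomological criterion and then prove the resulting numerical equivalence by hand. Write $\pi\colon V\to X$ for the contraction of $D$ (which exists, with $D$ of negative definite intersection matrix, by Lemma~\ref{cont}), and set $x=\pi(D)$. By definition $x$ is a rational singular point precisely when $R^1\pi_{\ast}\mathcal{O}_V=0$. For an effective $\bZ$-divisor $Z$ supported on $D$ I will use the adjunction identity $p_a(Z)=1-\chi(\mathcal{O}_Z)=1-h^0(\mathcal{O}_Z)+h^1(\mathcal{O}_Z)$; since any such $Z$ with $\Supp(Z)=\Supp(D)$ is connected, $h^0(\mathcal{O}_Z)\ge 1$, so $p_a(Z)\le 0$ is equivalent to $h^1(\mathcal{O}_Z)\le h^0(\mathcal{O}_Z)-1$, and in particular is implied by $h^1(\mathcal{O}_Z)=0$. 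Thus everything comes down to comparing the vanishing of $R^1\pi_{\ast}\mathcal{O}_V$ with that of $h^1(\mathcal{O}_Z)$.

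For the ``only if'' direction I would show that rationality forces $h^1(\mathcal{O}_Z)=0$ for every effective $Z$ supported on $D$. By the theorem on formal functions the completed stalk of $R^1\pi_{\ast}\mathcal{O}_V$ at $x$ is the inverse limit over $n$ of the groups $H^1(\mathcal{O}_{Z_n})$, where $Z_n$ runs through the infinitesimal neighbourhoods cut out by the $n$-th power of the ideal of $D$; these satisfy $\Supp(Z_n)=\Supp(D)$ and increase cofinally. The restriction maps $H^1(\mathcal{O}_{Z_{n+1}})\to H^1(\mathcal{O}_{Z_n})$ are surjective (their kernel sheaf is supported on a curve, hence has no $H^2$), and $R^1\pi_{\ast}\mathcal{O}_V$ is coherent of finite length; so the dimensions stabilise and $R^1\pi_{\ast}\mathcal{O}_V=0$ gives $H^1(\mathcal{O}_{Z_n})=0$ for $n\gg 0$. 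For an arbitrary $Z$ I choose $n$ with $Z\le Z_n$ and use the surjection $H^1(\mathcal{O}_{Z_n})\to H^1(\mathcal{O}_Z)$ to conclude $h^1(\mathcal{O}_Z)=0$, whence $p_a(Z)=1-h^0(\mathcal{O}_Z)\le 0$.

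For the ``if'' direction, assume $p_a(Z)\le 0$ for all $Z$ with $\Supp(Z)=\Supp(D)$. First I would upgrade this to $p_a(Y)\le 0$ for every nonzero effective $Y$ supported on $D$: if some $Y$ had $p_a(Y)\ge 1$, then, using connectedness of $D$ to attach a component $D_j\not\le Y$ meeting $\Supp(Y)$ (so $Y\cdot D_j\ge 1$) together with $p_a(Y+D_j)=p_a(Y)+p_a(D_j)+Y\cdot D_j-1\ge p_a(Y)$ (as $p_a(D_j)\ge 0$), one reaches a full-support cycle with $p_a\ge 1$, a contradiction. The heart of the matter is then the following peeling lemma: if $p_a(Z)\le 0$ there is a component $D_i\le Z$ with $(Z-D_i)\cdot D_i\le 1$. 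Indeed each component now has $p_a(D_i)=0$, hence $D_i\cong\bP^1$ and $K_V\cdot D_i=-2-(D_i)^2$, so $D_i\cdot(K_V+Z)=(Z-D_i)\cdot D_i-2$; were $(Z-D_i)\cdot D_i\ge 2$ for every $D_i\le Z$, then $2p_a(Z)-2=(K_V+Z)\cdot Z=\sum_i m_i\,D_i\cdot(K_V+Z)\ge 0$ would force $p_a(Z)\ge 1$. With such a $D_i$ fixed and $Z':=Z-D_i$, the sequence $0\to\mathcal{O}_{D_i}(-Z')\to\mathcal{O}_Z\to\mathcal{O}_{Z'}\to 0$, combined with $h^1(\mathcal{O}_{Z'})=0$ (induction on $\sum_i m_i$, the base case $Z=D_i$ being $h^1(\mathcal{O}_{\bP^1})=0$) and $H^1(\bP^1,\mathcal{O}(d))=0$ for $d=-(Z-D_i)\cdot D_i\ge -1$, yields $h^1(\mathcal{O}_Z)=0$. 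Running this for all $Z$ and invoking formal functions once more gives $R^1\pi_{\ast}\mathcal{O}_V=0$, i.e.\ rationality.

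The step I expect to be the main obstacle is the peeling lemma and the bookkeeping around it: one must guarantee at every stage of the induction that a component can be removed without producing a line bundle of degree below $-1$ on it, and it is precisely the hypothesis $p_a\le 0$, through the identity $D_i\cdot(K_V+Z)=(Z-D_i)\cdot D_i-2$, that makes this possible. The remaining ingredient, passing between $R^1\pi_{\ast}\mathcal{O}_V$ and the finite-level groups $H^1(\mathcal{O}_Z)$, is standard but needs the finiteness and surjectivity remarks above. Both ingredients are essentially Artin's numerical criteria \cite{Art62}, which is the reference I would ultimately cite.
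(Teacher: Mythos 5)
Your proposal is correct. The paper offers no argument of its own here --- its proof is literally ``See \cite{Art62}'' --- so what you have written is a self-contained reconstruction of Artin's criterion, and it is the right one: formal functions plus surjectivity of the restriction maps $H^1(\sO_{Z_{n+1}})\to H^1(\sO_{Z_n})$ for the ``only if'' direction, and the peeling/computation-sequence induction driven by the identity $D_i\cdot(K_V+Z)=(Z-D_i)\cdot D_i-2$ for the ``if'' direction. The one place where the lemma as stated genuinely deviates from Artin's usual formulation is that it quantifies only over cycles $Z$ with \emph{full} support $\Supp(Z)=\Supp(D)$, whereas the induction needs $p_a(Y)\le 0$ for every nonzero effective $Y$ supported on $D$; you noticed this and your augmentation step (adding an adjacent component $D_j$ and using $p_a(Y+D_j)=p_a(Y)+p_a(D_j)+Y\cdot D_j-1\ge p_a(Y)$, with connectedness of $D$ guaranteeing $Y\cdot D_j\ge 1$) correctly bridges the gap. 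I see no errors; the only cosmetic remark is that in the ``only if'' direction the Mittag--Leffler surjectivity in fact gives $H^1(\sO_{Z_n})=0$ for every $n$, not merely for $n\gg 0$, though the weaker claim already suffices for your purposes.
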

\begin{proof}
See {\cite{Art62}}. 
\end{proof}
In what follows, assume that every irreducible component of $D$ is a smooth rational curve with self-intersection number $\le -2$ and the intersection matrix of $D$ is negative definite. 
Let $D=\sum _iD_i$ be the decomposition of $D$ into irreducible components. 
Since the intersection matrix of $D$ is negative definite, there exists uniquely a $\bQ$-divisor $D^{\sharp}=\sum _i \alpha _iD_i$ on $V$ such that $(D_i \cdot K_V+D^{\sharp})=0$ for every irreducible component $D_i$ of $D$. 
By {\cite[Lemma 7.1]{Zar62}}, we note that $D^{\sharp}$ is effective. 
\begin{lem}\label{alpha}
With the same notation and the assumptions as above, we have the following assertions: 
\begin{enumerate}
\item Let $D_0$ and $D_1$ be irreducible components of $D$ such that $(D_0 \cdot D_1)=1$. 
If $\alpha _0>0$, then $\alpha _1>0$. 
\item Let $r$ be a positive integer, and let $D_0,D_1,\dots ,D_r$ be irreducible components of $D$ such that $(D_0 \cdot D-D_0)=r$ and $(D_0 \cdot D_i)=1$ for $i=1,\dots ,r$. 
If $\alpha _0,\alpha _1,\dots ,\alpha _{r-1} \in \bZ$, then $\alpha _r \in \bZ$. 
\end{enumerate}
\end{lem}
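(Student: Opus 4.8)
The plan is to work directly with the defining linear system for the coefficients $\alpha_i$ and to combine it with the adjunction formula for each component. Writing $D = \sum_i D_i$, recall that $D^{\sharp} = \sum_i \alpha_i D_i$ is characterized by $(D_i \cdot K_V + D^{\sharp}) = 0$ for every $i$. Since each $D_i$ is a smooth rational curve, adjunction gives $(D_i \cdot K_V) = -2 - (D_i)^2$; writing $(D_i)^2 = -d_i$ with $d_i \ge 2$, this reads $(D_i \cdot K_V) = d_i - 2 \ge 0$. Substituting into the defining equation at the index $i$ and isolating the diagonal term yields the master relation
\[
\alpha_i d_i = (d_i - 2) + \sum_{j \ne i} \alpha_j (D_i \cdot D_j),
\]
which is the identity I would use for both parts.

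For part (1), I would apply the master relation with $i = 1$. By the effectiveness of $D^{\sharp}$ quoted above (Zariski's lemma), every $\alpha_j \ge 0$, and every cross term $(D_1 \cdot D_j)$ with $j \ne 1$ is nonnegative because distinct irreducible curves meet nonnegatively. Since $d_1 - 2 \ge 0$ and, by hypothesis, the single term $\alpha_0 (D_1 \cdot D_0) = \alpha_0 (D_0 \cdot D_1) = \alpha_0 > 0$ already appears in the sum, the right-hand side is strictly positive. As $d_1 > 0$, this forces $\alpha_1 > 0$.

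For part (2), I would apply the master relation with $i = 0$. The hypotheses $(D_0 \cdot (D - D_0)) = r$ and $(D_0 \cdot D_i) = 1$ for $i = 1, \dots, r$ together say that $D_0$ meets exactly $D_1, \dots, D_r$ among the components of $D$, each with intersection number $1$, and meets no other component. Hence the sum collapses to $\sum_{i=1}^{r} \alpha_i$, and the relation becomes
\[
\alpha_0 d_0 = (d_0 - 2) + \sum_{i=1}^{r} \alpha_i .
\]
Solving for $\alpha_r$ gives $\alpha_r = \alpha_0 d_0 - (d_0 - 2) - \sum_{i=1}^{r-1} \alpha_i$. Since $d_0 = -(D_0)^2 \in \bZ$ and $\alpha_0, \alpha_1, \dots, \alpha_{r-1} \in \bZ$ by hypothesis, the right-hand side is an integer, so $\alpha_r \in \bZ$.

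The computations are short, so I do not expect a serious obstacle; the only points requiring care are invoking the effectiveness of $D^{\sharp}$ at the right moment in part (1), and, in part (2), correctly reading the combinatorial hypotheses to conclude that $D_1, \dots, D_r$ exhaust the neighbors of $D_0$, which is precisely what allows the unknown coefficients outside $\{\alpha_0, \dots, \alpha_r\}$ to drop out of the relation.
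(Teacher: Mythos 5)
Your proposal is correct and follows essentially the same route as the paper: both parts rest on the defining relation $(D_i\cdot K_V+D^{\sharp})=0$ rewritten via adjunction as $d_i\alpha_i=(d_i-2)+\sum_{j\ne i}\alpha_j(D_i\cdot D_j)$, with effectiveness of $D^{\sharp}$ giving positivity in (1) and integrality of the surviving terms giving (2). No substantive differences from the paper's argument.
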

\begin{proof}
In (1), for simplicity, we put $m_1 := -(D_1)^2 \ge 2$. 
Since $-(D_1 \cdot K_V)=(D_1 \cdot D^{\sharp})$, we then have $-(m_1-2) \ge -m_1\alpha _1 + \alpha _0 > -m_1\alpha _1$. 
Thus, we obtain $\alpha _1 > \frac{m_1-2}{m_1} \ge 0$ by virtue of $m_1 \ge 2$. 

In (2), for simplicity, we put $m_0 := -(D_0)^2$. 
Since $-(D_0 \cdot K_V)=(D_0 \cdot D^{\sharp})$, we then have $-(m_0-2) = -m_0\alpha _0 + \sum _{i=1}^{r}\alpha _i$. 
Here, $\alpha _0,\alpha _1,\dots ,\alpha _{r-1}$ and $m_0$ are integers by the assumption. 
Thus, we know that $\alpha _r \in \bZ$. 
\end{proof}
\subsection{Some notations and properties of compactifications of the affine plane}\label{2-2}
A normal completed surface $X$ is a {\it numerical del Pezzo surface} if the anti-canonical divisor $-K_X$ is numerically ample. 
In other words, $(-K_X)^2>0$ and $(-K_X \cdot \Gamma )>0$ for every curve $\Gamma$ on $X$ with Mumford's rational intersection number
(cf. {\cite{Mum61,Sak84}}). 
Our goal in this article is to show that any minimal compactification of the affine plane with at worse rational singularities is a numerical del Pezzo surface. 
For this purpose, we shall review some definitions and properties of compactifications of $\bC ^2$. 
\begin{defin}\label{def cpt'n}
Let $X$ be a normal compact analytic surface and let $\Gamma$ be a compact analytic curve on $X$. Then: 
\begin{enumerate}
\item The pair $(X,\Gamma )$ is a {\it compactificaion} of the affine plane $\bC ^2$ if $X \backslash \Gamma$ is biholomorphic to $\bC ^2$. 
\item A compactification $(X,\Gamma )$ of the affine plane $\bC ^2$ is {\it minimal} if $\Gamma$ is irreducible. 
\end{enumerate}
\end{defin}
We also recall minimal normal compactifications of $\bC ^2$. 
\begin{defin}
Let $(V,\Delta )$ be a compactification of the affine plane $\bC ^2$ such that $V$ is a smooth projective surface and $\Delta$ is an SNC-divisor on $V$. 
Then we say that $(V,\Delta )$ is a {\it minimal normal compactification} of $\bC ^2$ if $(C \cdot \Delta -C) \ge 3$ for any $(-1)$-curve $C$ on $\Supp (\Delta )$.  
\end{defin}
It is known that minimal normal compactifications of $\bC ^2$ are completely classified by Morrow ({\cite{Mor73}}). 
However, this article will not go into detail because there is no direct use for this classification. 

At the end of this subsection, we shall discuss the canonical divisors of minimal compactifications of the affine plane. 
Let $(X,\Gamma )$ be a minimal compactification of $\bC ^2$, let $\pi :V \to X$ be the minimal resolution, let $D$ be the reduced exceptional divisor of $\pi$, and let $C$ be the proper transform of $\Gamma$ by $\pi$. 
We notice that $(V,C+D)$ is a compactification of $\bC ^2$. 
Let $D=\sum _{i=1}^nD_i$ be the decomposition of $D$ into irreducible components. 
Since the intersection matrix of $D$ is negative definite, there exists uniquely an effective $\bQ$-divisor $D^{\sharp}=\sum _{i=1}^n \alpha _iD_i$ on $V$ such that $(D_i \cdot K_V+D^{\sharp})=0$ for every irreducible component $D_i$ of $D$. 
Then we obtain Lemma \ref{numerical}: 
\begin{lem}\label{numerical}
With the same notation as above, we assume further that $C$ is a $(-1)$-curve. 
Then the following assertions hold: 
\begin{enumerate}
\item The anti-canonical divisor $-K_X$ is numerically ample if and only if $(D^{\sharp} \cdot C)<1$. 
\item The canonical divisor $K_X$ is numerically trivial if and only if $(D^{\sharp} \cdot C)=1$. 
\item The canonical divisor $K_X$ is numerically ample if and only if $(D^{\sharp} \cdot C)>1$. 
\end{enumerate}
\end{lem}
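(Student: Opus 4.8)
The plan is to reduce all three equivalences to a single computation of $(K_X \cdot \Gamma)$ together with the fact, recalled in the introduction, that $X$ has Picard rank one. Granting this, the group $N^1(X)_{\bR}$ of numerical classes is one-dimensional, its intersection form is positive definite (so $(\Gamma)^2 > 0$), and every curve on $X$ is numerically a positive multiple of $\Gamma$. Writing $K_X \equiv \lambda \Gamma$ with $\lambda = (K_X \cdot \Gamma)/(\Gamma)^2 \in \bQ$, this yields a trichotomy: since $(-K_X)^2 = \lambda^2 (\Gamma)^2$ is automatically positive once $\lambda \neq 0$, and since the quantifier ``for every curve $\Gamma'$'' in the definition of numerical ampleness collapses to the single curve $\Gamma$ by proportionality, one gets $-K_X$ numerically ample $\iff \lambda < 0$, $\;K_X \equiv 0 \iff \lambda = 0$, and $K_X$ numerically ample $\iff \lambda > 0$. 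As $\lambda$ has the same sign as $(K_X \cdot \Gamma)$, it suffices to express $(K_X \cdot \Gamma)$ in terms of $(D^{\sharp} \cdot C)$.

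The crucial identity is $\pi^{\ast} K_X = K_V + D^{\sharp}$. I would obtain it from Mumford's numerical pullback: since $\pi_{\ast} K_V = K_X$, the two divisors $\pi^{\ast} K_X$ and $K_V$ differ only by an exceptional divisor, say $\pi^{\ast} K_X = K_V + \sum_i \beta_i D_i$; the defining property $(\pi^{\ast} K_X \cdot D_j) = 0$ for all $j$ then reads $((K_V + \sum_i \beta_i D_i) \cdot D_j) = 0$, which is precisely the linear system characterizing $D^{\sharp}$. Because the intersection matrix of $D$ is negative definite this system has a unique solution, so $\sum_i \beta_i D_i = D^{\sharp}$ and hence $\pi^{\ast} K_X = K_V + D^{\sharp}$.

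It remains to compute. As $C$ is the proper transform of $\Gamma$ we have $\pi_{\ast} C = \Gamma$, so the projection formula gives $(K_X \cdot \Gamma) = (\pi^{\ast} K_X \cdot C) = ((K_V + D^{\sharp}) \cdot C)$. Since $C$ is a $(-1)$-curve, adjunction gives $(K_V \cdot C) = -2 - (C)^2 = -1$, whence $(K_X \cdot \Gamma) = -1 + (D^{\sharp} \cdot C)$. Feeding this into the trichotomy above proves (1), (2) and (3) simultaneously, the three cases corresponding to $(D^{\sharp} \cdot C) < 1$, $= 1$, and $> 1$. I expect the only real obstacle to be conceptual rather than computational: one must make sure the numerical arguments are legitimate on the possibly non-algebraic surface $X$ --- namely that $\rho(X) = 1$ with positive-definite intersection form, and that Mumford's rational intersection theory supplies both the projection formula and the numerical pullback used above --- after which the adjunction calculation is entirely routine.
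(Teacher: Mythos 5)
Your proposal is correct and follows essentially the same route as the paper: use $\rho(X)=1$ to reduce numerical ampleness/triviality of $\pm K_X$ to the sign of $(K_X\cdot\Gamma)$, identify $\pi^{\ast}K_X\equiv K_V+D^{\sharp}$ via the defining system for $D^{\sharp}$, and compute $(K_X\cdot\Gamma)=(K_V+D^{\sharp}\cdot C)=-1+(D^{\sharp}\cdot C)$ by adjunction on the $(-1)$-curve $C$. The only difference is cosmetic: you package all three cases in one trichotomy $K_X\equiv\lambda\Gamma$ (which cleanly handles the case $K_X\equiv 0$), whereas the paper proves (1) by writing $\Gamma\equiv -aK_X$ and leaves (2) and (3) to the reader.
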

\begin{proof}
We shall prove the assertion (1). 
Since the Picard rank of $X$ is equal to one, we can write $\Gamma \equiv -aK_X$ for some $a \in \bQ _{>0}$. 
Hence, $-K_X$ is numerically ample if and only if $-(K_X \cdot \Gamma )>0$ with Mumford's rational intersection number (see {\cite{Mum61,Sak84}}). 
Moreover, by the definition of Mumford's rational intersection number, we can compute $(K_X \cdot \Gamma ) = (\pi ^{\ast}(K_X) \cdot \pi  _{\ast}^{-1}(\Gamma)) = (K_V+D^{\sharp} \cdot C) = -1+(D^{\sharp} \cdot C)$ because $\pi ^{\ast}(K_X) \equiv K_V + D^{\sharp}$. 
Thus, we obtain the assertion (1). 
Other assertions are the same as (1) and are left to the reader. 
\end{proof}
\begin{rem}
We mainly consider the situation $\rho (V)>2$ in this article. 
Then we will know that $C$ is a $(-1)$-curve by Lemma \ref{lem(3-1)} (3) below. 
\end{rem}
\subsection{Determinants of weighted dual graphs}\label{2-3}
Let $V$ be a smooth projective surface and let $D$ be a reduced divisor on $V$. 
We consider the intersection matrix $I(D)$ of $D$. 
In other words, letting $D=\sum _{i=1}^nD_i$ be the decomposition of $D$ into irreducible components, $I(D) = [(D_i \cdot D_j)]_{1 \le i,\,j \le n}$. 
In this article, for the weighted dual graph $A$ of $D$, we define the {\it determinant} $d(A)$ of $A$ by the determinant of the matrix $-I(D)$. 
Then we shall show the following lemmas: 
\begin{lem}\label{lem(2-3-1)}
Let $(V,\Delta )$ be a compactification of $\bC ^2$ such that $V$ is smooth, and let $A$ be the weighted dual graph of $\Delta$. 
Then we have $d(A)=-1$. 
\end{lem}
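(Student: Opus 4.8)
The plan is to identify the irreducible components of $\Delta$ with an \emph{integral} basis of $H^2(V,\bZ)$ and then to read off $\det\big(-I(\Delta)\big)$ from unimodularity of the intersection form together with the Hodge index theorem. Write $\Delta=\sum_{i=1}^n D_i$ for the decomposition into irreducible components, so that $I(\Delta)=[(D_i\cdot D_j)]_{1\le i,j\le n}$ and $d(A)=\det\big(-I(\Delta)\big)$. Since $V$ contains $\bC^2$ as a Zariski-dense open subset, $V$ is a smooth projective rational surface; in particular $V$ is simply connected with $q=p_g=0$, and hence $\mathrm{Pic}(V)=\mathrm{NS}(V)=H^2(V,\bZ)$ is a torsion-free abelian group of rank $\rho(V)=b_2(V)$.

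The first step is to show that $\{[D_i]\}_{i=1}^n$ is a $\bZ$-basis of $\mathrm{Pic}(V)$. I would invoke the standard right-exact localization sequence $\bigoplus_{i=1}^n \bZ\,[D_i]\xrightarrow{\ \phi\ }\mathrm{Pic}(V)\to\mathrm{Pic}(V\setminus\Delta)\to 0$ for the smooth surface $V$ and its open complement $V\setminus\Delta\cong\bC^2$. Because $\mathrm{Pic}(\bC^2)=0$, the map $\phi$ is surjective, so the $[D_i]$ generate $\mathrm{Pic}(V)$. For injectivity, a relation $\sum_i a_i D_i\sim 0$ means $\sum_i a_i D_i=\mathrm{div}(f)$ for a rational function $f$ on $V$; its restriction to $V\setminus\Delta\cong\bC^2$ has empty divisor, hence is a unit of $\sO(\bC^2)=\bC[x,y]$, i.e.\ a nonzero constant. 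As $\bC^2$ is dense, $f$ is then constant on $V$, so $\sum_i a_i D_i=0$ and all $a_i=0$. Thus $\phi$ is an isomorphism, $\{[D_i]\}$ is an integral basis of $\mathrm{Pic}(V)=H^2(V,\bZ)$, and $n=b_2(V)$.

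For the second step I would pin down the determinant and its sign. Since $V$ is a closed oriented $4$-manifold with $H^2(V,\bZ)$ torsion-free, Poincar\'e duality makes the cup-product (= intersection) form on $H^2(V,\bZ)$ unimodular; because $\{[D_i]\}$ is an integral basis, its Gram matrix satisfies $\det I(\Delta)=\pm1$. To fix the sign I would apply the Hodge index theorem to $\mathrm{NS}(V)\otimes\bR$: this form has signature $(1,n-1)$, so the product of its eigenvalues, and hence $\det I(\Delta)$, has sign $(-1)^{n-1}$. Being $\pm1$, it equals $(-1)^{n-1}$, and therefore $d(A)=\det\big(-I(\Delta)\big)=(-1)^n\det I(\Delta)=(-1)^n(-1)^{n-1}=-1$.

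The one delicate point, and the main obstacle, is integrality: one must argue that the components of $\Delta$ span $H^2(V,\bZ)$ over $\bZ$ and not merely over $\bQ$, since it is precisely this that upgrades $\det I(\Delta)\neq 0$ to $\det I(\Delta)=\pm1$. This is where the hypothesis $V\setminus\Delta\cong\bC^2$ is used essentially, through $\mathrm{Pic}(\bC^2)=0$ and $\sO(\bC^2)^{\times}=\bC^{\times}$, together with the identification $\mathrm{Pic}(V)=H^2(V,\bZ)$ valid for a rational surface. A more combinatorial alternative would be to contract $(-1)$-curves contained in $\Delta$ down to a minimal normal compactification and track the behaviour of $\det(-I)$ under each blow-down, but this would require Morrow's classification of the base cases and seems less economical than the lattice-theoretic argument above.
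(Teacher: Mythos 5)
Your proof is correct, but it follows a genuinely different route from the paper's. The paper proves this lemma by connecting $(V,\Delta)$ to $(\bP^2_{\bC},\text{line})$ through birational morphisms $\mu:\bar V\to V$ and $\nu:\bar V\to\bP^2_{\bC}$ preserving the complement $\bC^2$, and then invoking Fujita's result that the determinant $d$ of the boundary graph is invariant under such transformations; the computation reduces to $d=-|(1)|=-1$ for a line. You instead argue intrinsically on $V$: the components of $\Delta$ form an integral basis of $\mathrm{Pic}(V)=H^2(V,\bZ)$, so unimodularity of the intersection form gives $\det I(\Delta)=\pm 1$, and the Hodge index theorem fixes the sign. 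Your argument is self-contained (no appeal to Fujita's invariance or to the existence of the dominating resolution $\bar V$) and explains conceptually \emph{why} the answer is $-1$, whereas the paper's argument is shorter given the machinery it cites and sets up the determinant bookkeeping reused in Lemma \ref{lem(2-3-2)}. One delicate point in your version: the injectivity step, where you conclude that a rational function with $\mathrm{div}(f)$ supported on $\Delta$ restricts to a \emph{constant} on $V\setminus\Delta$, uses $\sO(\bC^2)^{\times}=\bC^{\times}$ for the \emph{algebraic} structure on $V\setminus\Delta$; since the compactification is a priori only analytic, this tacitly invokes the standard (but nontrivial) fact that $V$ is projective rational and $V\setminus\Supp(\Delta)$ is algebraically $\bA^2$ -- a fact the paper also relies on implicitly. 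If you want to avoid it entirely, the same integral-basis statement follows purely topologically from the exact sequence of the pair $(V,V\setminus\Delta)$ together with Lefschetz duality $H^2(V,V\setminus\Delta;\bZ)\cong H_2(\Delta;\bZ)\cong\bigoplus_i\bZ[D_i]$, using only that $V\setminus\Delta$ is homeomorphic to $\bR^4$.
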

\begin{proof}
There exist two birational morphisms $\mu :\bar{V} \to V$ and $\nu :\bar{V} \to \bP ^2_{\bC}$ between smooth projective surfaces such that $V \backslash \Supp (\Delta ) \simeq \bP ^2_{\bC} \backslash \Supp ((\nu \circ \mu ^{-1})_{\ast}(\Delta )) \simeq \bC ^2$. 
Let $B$ be the weighted dual graph of $(\nu \circ \mu ^{-1})_{\ast}(\Delta )$. 
Since $(\nu \circ \mu ^{-1})_{\ast}(\Delta )$ is a line on $\bP ^2_{\bC}$, we obtain $d(B)=-1$. 
On the other hand, since $d(A)=d(B)$ (see\ {\cite[(3.4)]{Fuj82}}), we obtain $d(A)=-1$. 
\end{proof}
\begin{lem}\label{lem(2-3-2)}
Let $D$ and $E$ be two connected reduced divisors on a smooth projective surface $V$ and let $A$ and $B$ be two weighted dual graphs of $D$ and $E$, respectively. 
Assume that there exists a $(-1)$-curve $C$ on $V$ such that $(C \cdot D) = (C \cdot E)=1$ and $C+D+E$ can be contracted to a single $(-1)$-curve (on a smooth projective surface). 
Then $d(A)$ and $d(B)$ are prime to each other. 
\end{lem}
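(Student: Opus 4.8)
The plan is to compute the determinant $d$ of the weighted dual graph of the whole curve $C+D+E$ in two different ways and then compare the answers.

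First I would pin down this determinant using the contraction hypothesis. Since $C+D+E$ can be contracted to a single $(-1)$-curve and the determinant of a weighted dual graph is preserved under blowing up and down (this is \cite[(3.4)]{Fuj82}, already used in the proof of Lemma \ref{lem(2-3-1)}), the determinant of the weighted dual graph of $C+D+E$ equals that of a single $(-1)$-curve, which is $1$. Concretely, each contraction of a $(-1)$-curve occurring inside the configuration leaves the determinant of the remaining configuration unchanged, and at the end only one $(-1)$-curve survives.

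Second I would read off the same determinant from the block structure of $-I(C+D+E)$. Because the configuration is obtained from a single $(-1)$-curve by a sequence of blow-ups, its dual graph is a tree; as $C$ is joined both to $D$ and to $E$, this forces $D\cap E=\emptyset$, so $(D_i\cdot E_j)=0$ for all $i,j$. Writing $D_1$ (resp.\ $E_1$) for the component of $D$ (resp.\ $E$) met by $C$, and using $(C)^2=-1$ together with $(C\cdot D)=(C\cdot E)=1$, the matrix $-I(C+D+E)$ is $\mathrm{diag}\big({-I(D)},{-I(E)}\big)$ bordered by the row and column of $C$, whose only nonzero off-diagonal entries are $-1$ in the $D_1$- and $E_1$-slots. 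A cofactor expansion along the row of $C$ then gives
\[
\det\big({-I(C+D+E)}\big)=d(A)\,d(B)-d(A')\,d(B)-d(A)\,d(B'),
\]
where $A'$ (resp.\ $B'$) is the graph obtained from $A$ (resp.\ $B$) by deleting the vertex $D_1$ (resp.\ $E_1$), the empty graph being assigned determinant $1$. This identity is a pure cofactor expansion, so it needs no positivity or invertibility hypothesis on $-I(D)$ or $-I(E)$.

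Comparing the two computations yields $d(A)\,d(B)-d(A')\,d(B)-d(A)\,d(B')=1$. Setting $g:=\gcd\big(d(A),d(B)\big)$, each of the three terms on the left is divisible by $g$ — the first and third through the factor $d(A)$, the second through the factor $d(B)$ — so $g$ divides $1$, i.e.\ $d(A)$ and $d(B)$ are prime to each other. I expect the only real work to be in the first step: checking carefully that the determinant is preserved under each blow-down used to reach the final $(-1)$-curve (so that it equals exactly $1$), and justifying the tree structure that gives $D\cap E=\emptyset$. Once these are in place, the determinant expansion and the divisibility conclusion are routine.
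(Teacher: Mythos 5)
Your proposal is correct and follows essentially the same route as the paper: both use the invariance of the determinant under the contraction to a single $(-1)$-curve (via \cite[(3.4)]{Fuj82}) to get the value $1$, expand the determinant along the vertex $C$ to obtain $d(A)d(B)-a\,d(B)-b\,d(A)=1$ with $a,b\in\bZ$ (your $d(A')$, $d(B')$ are exactly the paper's integers $a$, $b$), and conclude by a B\'ezout-type divisibility argument.
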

\begin{proof}
By the assumption and {\cite[(3.4)]{Fuj82}}, there exist two integers $a$ and $b$ such that we have: 
\begin{align}\label{det}
\begin{split}
1 = \left| \begin{array}{ccc}
d(A) & -a & 0 \\
-1 & 1 & -1 \\
0 & -b & d(B) 
\end{array} \right|
&= d(A)d(B) - b \cdot d(A) -a \cdot d(B) \\
&= (d(B)-b) d(A) + (-a) d(B).
\end{split}
\end{align}
This implies that $d(A)$ and $d(B)$ are prime to each other. 
\end{proof}
\section{Boundaries of minimal compactifications of the affine plane}\label{3}
In this section, we summarize some results on minimal compactifications of $\bC ^2$, mainly based on {\cite{KT09}} (see also {\cite{Koj01b}}). 

Let $(X,\Gamma )$ be a minimal compactification of $\bC ^2$ such that $\Sing (X) \not= \emptyset$, let $\pi :V \to X$ be the minimal resolution of $X$, let $D$ be the reduced exceptional divisor of $\pi$, and let $C$ be the proper transform of $\Gamma$ by $\pi$. 
Then we obtain a compactification $(V,C+D)$ of $\bC ^2$. 

The following facts (Lemmas \ref{lem(3-1)}, \ref{lem(3-2)} and \ref{lem(3-3)}) refer to {\cite[Lemmas 4.1--4.6]{KT09}}. 
They are shown by the computation of birational transformations between smooth projective surfaces and the classification of minimal normal compactifications of the affine plane (see {\cite{Mor73}}). 
As a remark, {\cite{KT09}} assumes that $X$ has at most log canonical singularities, but results {\cite[Lemmas 4.1--4.6]{KT09}} do not use this assumption. 
Hence, the following Lemma \ref{lem(3-1)} be shown immediately by referring to the proof of {\cite[Lemmas 4.1--4.4]{KT09}}, so we omit the proof of this lemma in this article: 
\begin{lem}[{\cite[Lemmas 4.1--4.4]{KT09}}]\label{lem(3-1)}
With the same notation as above, we have: 
\begin{enumerate}
\item $\sharp \Sing (X) \le 2$. In other words, $D$ consists of at most two connected components. 
\item $C+D$ is an SNC-divisor. 
\item If $C$ is not a $(-1)$-curve, then $V=\bF _m$ for some $m \ge 2$. 
\end{enumerate}
\end{lem}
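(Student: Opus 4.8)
The plan is to compare the given model $(V,C+D)$ with a minimal normal compactification of $\bC ^2$ and to extract the three assertions from Morrow's classification \cite{Mor73}, while keeping careful track of the intervening birational maps. First I would record the ambient geometry: since $V \backslash (C+D) \simeq \bC ^2$ is a smooth rational affine surface, $V$ is a smooth projective rational surface; the complement of an affine open set in a projective variety of dimension $\ge 2$ is connected, so $\Delta := C+D$ is connected; and because $\bC ^2$ is contractible, the boundary of any \emph{SNC} compactification of $\bC ^2$ is a tree of smooth rational curves. A further basic principle is that any birational map between two compactifications of $\bC ^2$ is biregular over $\bC ^2$, so every blow-up or blow-down relating such models is centred on the successive boundaries.

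For assertion (2) I would show that this tree structure descends to the minimal resolution. Blowing up the finitely many non-normal-crossing points of $C+D$ yields an SNC model $(\widetilde V, \widetilde\Delta )$ dominating $V$, and $\widetilde\Delta$ is then a tree of smooth rational curves whose contraction back to $V$ only collapses curves lying over $\Supp (C+D)$. The decisive input is the minimality of $\pi :V \to X$, which forbids $(-1)$-curves inside $D$: this prevents the contraction $\widetilde V \to V$ from forcing two branches of the tree together or from producing a triple point on $C+D$. Tracing the contraction I would conclude that each component of $D$ is already a smooth rational curve and that $C+D$ has only transversal double points, i.e.\ that $C+D$ is an SNC-divisor.

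Granting (2), the reduction to the classification becomes available. Contracting boundary $(-1)$-curves that meet the rest of $C+D$ in at most two points, until none such remains, produces a minimal normal compactification $(\bar V,\bar\Delta )$, which by \cite{Mor73} is either $(\bP ^2,\text{line})$ or $(\bF _m, \text{section}+\text{fibre})$; in particular its boundary has at most two components. This settles (3) directly: if $C$ is not a $(-1)$-curve then, since $D$ contains no $(-1)$-curve by minimality of $\pi$, the whole of $C+D$ contains no $(-1)$-curve and is therefore \emph{itself} minimal normal. The case $\bP ^2$ is excluded by $\Sing (X)\neq\emptyset$, so $V=\bF _m$; and as $D=(C+D)-C$ must be negative definite (being exceptional for $\pi$) it cannot be the fibre, hence $C$ is the fibre and $D$ the negative section of self-intersection $-m$, whereupon singularity of $X$ forces $m \ge 2$. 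For (1), I would run the reverse process: starting from $\bar\Delta$ (a tree with at most two vertices) and blowing up only along the boundary to recover $(V,C+D)$, one counts that $C$ remains incident to at most two connected pieces of $D$; since $\Delta$ is a connected tree, $D=\Delta -C$ then has at most two connected components, i.e.\ $\sharp\Sing (X)\le 2$. Throughout I would use $d(A)=-1$ (Lemma \ref{lem(2-3-1)}) and the coprimality of determinants (Lemma \ref{lem(2-3-2)}) to restrict the admissible weighted graphs.

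I expect the main obstacle to lie in assertion (1), precisely in the bookkeeping of the reverse blow-up process: one must verify that no sequence of boundary blow-ups recovering the minimal resolution can raise the valency of $C$ in the dual graph above two without violating connectedness, the SNC property, or the contractibility of $D$. Controlling how self-intersections and incidences along $C$ evolve under these modifications—rather than the global rationality, connectedness, and tree facts, which are comparatively formal—is where the genuine work resides, and it is exactly the combinatorial content distilled in \cite[Lemmas 4.1--4.4]{KT09}.
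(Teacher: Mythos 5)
The paper itself offers no proof of this lemma: it states that Lemma \ref{lem(3-1)} follows ``immediately by referring to the proof of [KT09, Lemmas 4.1--4.4]'', which the paper describes as a computation of birational transformations combined with Morrow's classification of minimal normal compactifications. Your strategy --- pass to a minimal normal model, invoke \cite{Mor73}, and track incidences and self-intersections through the intervening boundary blow-ups using the tree structure of the boundary of $\bC ^2$ and the determinant lemmas --- is exactly that strategy, and you are right to locate the irreducible combinatorial work in assertion (1). So in outline this is the proof the paper intends, with the hard core deferred to the same reference.

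Two steps, however, would not survive as written. First, in your argument for (2) the minimality of $\pi$ is asked to do work it cannot do: the exceptional curves of the log resolution $\widetilde{V} \to V$ of the non-SNC points of $C+D$ are \emph{new} curves, not components of $D$, so the absence of $(-1)$-curves in $D$ places no constraint whatever on the contraction $\widetilde{V} \to V$, and does not by itself prevent a valency-$\ge 3$ vertex of the tree $\widetilde{\Delta}$ (or a longer contracted subtree) from producing a triple point or a tangency on $C+D$. Ruling this out genuinely requires the global bookkeeping --- comparing the configuration with the linear-chain boundary of the minimal normal model and with the determinant constraints --- which is precisely the content of \cite{KT09} that you are trying to reproduce; the one-line appeal to ``no $(-1)$-curves in $D$'' is a step that fails. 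Second, Morrow's list is strictly larger than $\{ (\bP ^2,L),\ (\bF _m,M+F)\}$: it contains linear chains of length $\ge 3$ (with weights of the shape $[b,0,-a_1,\dots ,-a_k]$), and these must be excluded before the dichotomy you use in (3) is available. The exclusion is easy --- every such chain carries both a $0$-curve and a curve of positive self-intersection, and deleting the single component $C$ cannot remove both from the negative definite divisor $D=\Delta -C$ --- but it is a genuine step, and it is exactly what \cite[Lemma 4.5]{KT09} supplies and what the paper cites separately in the proof of Lemma \ref{lem(3-2)}; quoting \cite{Mor73} alone does not yield the two-case statement on which your proof of (3), and your count in (1), both rest.
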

If $X$ has at most log canonical singularities, then the minimal compactification $(X,\Gamma )$ of $\bC ^2$ has already been studied by {\cite{Koj01b,KT09}}. 
In what follows, assume that $X$ has a singular point worse than log canonical singularities. 
Then $C$ is a $(-1)$-curve by Lemma \ref{lem(3-1)} (3) because the Picard rank of $V$ is greater than two. 
Let $\nu :V \to W$ be a sequence of contractions of $(-1)$-curves and subsequently (smoothly) contractible curves in $\Supp (D)$, starting with the contraction of $C$, such that the pair $(W,D_W)$ is a minimal normal compactification of $\bC ^2$, where $D_W := \nu _{\ast}(C+D)$.
\begin{lem}\label{lem(3-2)}
With the same notation and assumptions as above, we may assume the following two conditions: 
\begin{itemize}
\item $(W,D_W) = (\bF _m,M+F)$ for some $m \ge 2$, where $M$ and $F$ are the minimal section and a fiber of the $\bP ^1$-bundle $\bF _m \to \bP ^1_{\bC}$, respectively. 
\item The point $\nu (C)$ does not lie on $M$. 
\end{itemize}
\end{lem}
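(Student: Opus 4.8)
The plan is to combine Morrow's classification of minimal normal compactifications of $\bC ^2$ (\cite{Mor73}) with the freedom available in the contraction $\nu$ and with elementary transformations of Hirzebruch surfaces, following \cite[Lemmas 4.5--4.6]{KT09}. First I would record that $C+D$ is connected: since $X \backslash \Gamma \simeq \bC ^2$ is smooth we have $\Sing (X) \subseteq \Gamma$, so the proper transform $C$ of $\Gamma$ meets every connected component of the exceptional divisor $D$. By Lemma \ref{lem(3-1)} (1) there are at most two such components, hence $C$ joins them and $C+D$ is connected; as $\nu$ is birational, $D_W = \nu _{\ast}(C+D)$ is connected as well. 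Applying Morrow's classification to the minimal normal compactification $(W,D_W)$, a connected boundary forces $(W,D_W)$ to be either $(\bP ^2,L)$ with $L$ a line, or $(\bF _m, M+F)$ with $M$ the minimal section, $F$ a fiber, and $m=0$ or $m \ge 2$ (the case $m=1$ is excluded, since there $M$ is a $(-1)$-curve with $(M \cdot F)=1<3$).

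Next I would exploit the freedom in the contraction process to avoid the degenerate endpoints. Since $\rho (V)>2$, the divisor $D$ has at least two irreducible components, so $C+D$ has at least three; depending on which contractible curves one chooses to contract, the process can terminate either at $(\bP ^2,L)$ (Picard rank one) or at a Hirzebruch surface $(\bF _m,M+F)$ (Picard rank two). I would stop the contraction one step before reaching Picard rank one, so that two boundary components $M+F$ survive; this lets us assume $W=\bF _m$. If the degree obtained is $m=0$, an elementary transformation (blow up a point of $M$, then contract the fiber through it) replaces $\bF _m$ by $\bF _{m+1}$, preserves the property of being a compactification of $\bC ^2$ with boundary of type $M+F$, and can be iterated until $m \ge 2$. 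Once $m \ge 2$, the pair $(\bF _m,M+F)$ is automatically minimal normal, since $M^2=-m \le -2$ and $F^2=0$ leave no $(-1)$-curve in the boundary. Because both the order of contractions in $\nu$ and these elementary transformations are at our disposal, this is exactly what justifies the phrase ``we may assume''.

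Finally I would arrange $\nu (C) \notin M$. The image $\nu (C) \in W$ of the first contracted curve $C$ lies on $D_W=M+F$; if it lies on $M$, one further elementary transformation centered at $\nu (C)$ raises $m$ by one and displaces the minimal section, so that the image of $C$ can be taken to lie on a fiber but off the new minimal section. The main obstacle is the simultaneous bookkeeping: one must verify that throughout all of these elementary transformations the complement of the boundary remains biholomorphic to $\bC ^2$, that the degree stabilizes at some $m \ge 2$, and that the position of $\nu (C)$ relative to $M$ is controlled at the same time as the degree condition, so that both normalizations can be imposed together. This verification is precisely the content of \cite[Lemmas 4.5--4.6]{KT09}, whose argument I would follow.
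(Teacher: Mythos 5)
Your outline is the same as the paper's: invoke the Morrow/\cite{KT09} classification to see that $(W,D_W)$ is $(\bP^2,L)$ or $(\bF_m,M+F)$, and then use an elementary transformation at the tail of the contraction $\nu$ to force $m\ge 2$ and $\nu(C)\notin M$. But two of your steps do not work as stated. First, ``stopping the contraction one step before reaching Picard rank one'' lands on $(\bF_1,M_1+F_1)$, where the minimal section $M_1$ is a $(-1)$-curve meeting the rest of the boundary in a single point; this is \emph{not} a minimal normal compactification, so it cannot serve as $(W,D_W)$, whose definition requires minimality. The paper instead goes back \emph{two} steps: it first proves that $\nu_{\ell-1}(C)$ must lie on $M_1\cap F_1$ (otherwise $C+D$ would contain two rational curves of self-intersection $\ge -1$, a contradiction), identifies $V_{\ell-2}$ as a weak del Pezzo surface of degree $7$ carrying exactly two boundary $(-1)$-curves, and re-routes the last two contractions through the other $(-1)$-curve to land on $(\bF_2,M+F)$ with $\nu(C)\notin M$.

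Second, you treat the elementary transformations as operations performed freely on $W$ (``blow up a point of $M$, then contract the fiber through it''), but the modified surface must still be the target of a contraction $\nu':V\to W'$ of the prescribed form, so the only admissible center is the fundamental point of $\nu^{-1}$, i.e.\ the point to which the last $(-1)$-curve of the tower is contracted. That this point lies on $M\cap F$ when $\nu(C)\in M$ (so that contracting $f^{-1}_{\ell,\ast}(F)$ instead of the last exceptional curve realizes the elementary transformation $\bF_m\rightsquigarrow\bF_{m+1}$ inside the tower, and moves the image of $C$ off the new minimal section) is exactly the configuration claim that carries the weight of the proof. You correctly flag this bookkeeping as the main obstacle and defer it to \cite[Lemmas 4.5--4.6]{KT09}, which is where the paper also sources it, but the specific shortcuts you propose in place of that bookkeeping are the parts that fail.
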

\begin{proof}
We write $V = V_0 \overset{f_1}{\to} V_1 \overset{f_2}{\to} \dots \overset{f_{\ell}}{\to} V_{\ell} = W$ $(\ell \in \bZ _{>0})$ as a composition of blowing-downs $f_i$ at a $(-1)$-curve for $i=1,\dots ,\ell$. 
Moreover, we set $\nu _i := f_i \circ \dots \circ f_1$ for $i=1,\dots ,\ell$. 
By {\cite[Lemma 4.5]{KT09}}, we know that $(W,D_W) = (\bP ^2_{\bC},L)$ or $(\bF _m,M+F)$, where $L$ is a line on $\bP ^2_{\bC}$. 

Assume that $(W,D_W) = (\bP ^2_{\bC},L)$. 
Then we know that $V_{\ell -1} = \bF _1$ and $\nu _{\ell -1,\ast}(C+D)$ consists of the minimal section $M_1$ and the fiber $F_1$, which is $f_{\ell ,\ast}^{-1}(L)$, of the $\bP ^1$-bundle $\bF _1 \to \bP ^1_{\bC}$. 
We note $\ell \ge 2$ because $V \not= \bF _1$. 
Moreover, we notice that the point $\nu _{\ell -1}(C)$ lies on the intersection $M_1 \cap F_1$. 
Indeed, otherwise, we know that $C+D$ contains at least two rational curves with self-intersection number $\ge -1$ by a similar argument to {\cite[Lemma 4.5]{KT09}}, so that we obtain a contradiction. 
Hence, $V_{\ell -2}$ is a weak del Pezzo surface of degree $7$ with only one $(-2)$-curve $f^{-1}_{\ell -1,\ast}(M_1)$. 
Moreover, $V_{\ell -2}$ contains exactly two $(-1)$-curves $f^{-1}_{\ell -1,\ast}(F_1)$ and the other, say $C_{\ell -2}$. 
Then we know that $\nu _{\ell -2,\ast}(C+D)=f^{-1}_{\ell -1,\ast}(M_1)+f^{-1}_{\ell -1,\ast}(F_1)+C_{\ell -2}$ and $\nu _{\ell -2}(C) \in f^{-1}_{\ell -1,\ast}(F_1) \cap C_{\ell -2}$. 
Hence, by replacing $f_{\ell} \circ f_{\ell -1}$ to the contraction $V_{\ell -2} \to \bF _2$ of $f_{\ell -1,\ast}^{-1}(F_1)$, we can assume that $(W,D_W)=(\bF _2,M+F)$ and $\nu (C) \not\in M$. 

Assume that $(W,D_W) = (\bF _m,M+F)$ for some $m \ge 2$ but $\nu (C) \in M$. 
Then $V_{\ell -2}$ contains exactly two $(-1)$-curves $f^{-1}_{\ell ,\ast}(F)$ and the other, say $C_{\ell -1}$. 
By the configurations, we know that $\nu _{\ell -1,\ast}(C+D)=f^{-1}_{\ell ,\ast}(M)+f^{-1}_{\ell ,\ast}(F)+C_{\ell -1}$ and $\nu _{\ell -1}(C) \in f^{-1}_{\ell ,\ast}(F) \cap C_{\ell -1}$. 
Hence, by replacing $f_{\ell}$ to the contraction $V_{\ell -1} \to \bF _{m+1}$ of $f_{\ell ,\ast}^{-1}(F)$, we can assume that $(W,D_W)=(\bF _{m+1},M+F)$ such that $\nu (C) \not\in M$. 
\end{proof}
Hence, we assume that $(W,D_W)$ satisfies the assumption in Lemma \ref{lem(3-2)}. 
By applying an argument of {\cite[Lemma 4.6]{KT09}}, we then obtain the following lemma: 
\begin{lem}\label{lem(3-3)}
With the same notation and assumptions as above, the dual graph of $C+D$ is as follows: 
\begin{align*}
\xygraph{
\circ ([]!{+(0,.3)} {^{\nu _{\ast}^{-1}(M)}}) - []!{+(.7,0)} \circ ([]!{+(0,-.3)} {^{\nu _{\ast}^{-1}(F)}})
- []!{+(.7,0)} \cdots ([]!{+(0,.4)} {\overbrace{\quad \qquad \qquad}^{\ge 1}}) - []!{+(.7,0)} \circ - []!{+(.8,0)} \circ
(- []!{+(0,-.6)} \circ - []!{+(0,-.7)} \vdots ([]!{+(.55,0)} {\left\} \begin{array}{l} \ \\ \ \\ \! ^{\ge 1} \\ \ \end{array} \right.}) - []!{+(0,-.7)} \circ,
- []!{+(.9,0)} \circ - []!{+(.7,0)} \cdots ([]!{+(0,.4)} {\overbrace{\quad \qquad \qquad}^{\ge 0}}) - []!{+(.7,0)} \circ - []!{+(.9,0)} \circ
(- []!{+(0,-.6)} \circ - []!{+(0,-.7)} \vdots ([]!{+(.55,0)} {\left\} \begin{array}{l} \ \\ \ \\ \! ^{\ge 1} \\ \ \end{array} \right.}) - []!{+(0,-.7)} \circ,
-[r] {\cdots \cdots \cdots} -[r] \circ
(- []!{+(0,-.6)} \circ - []!{+(0,-.7)} \vdots ([]!{+(.55,0)} {\left\} \begin{array}{l} \ \\ \ \\ \! ^{\ge 1} \\ \ \end{array} \right.}) - []!{+(0,-.7)} \circ,
- []!{+(.9,0)} \circ - []!{+(.7,0)} \cdots ([]!{+(0,.4)} {\overbrace{\quad \qquad \qquad}^{\ge 0}}) - []!{+(.7,0)} \circ - []!{+(.9,0)} \bullet ([]!{+(0,.3)} {^C})
- []!{+(0,-.6)} \circ - []!{+(0,-.7)} \vdots ([]!{+(.55,0)} {\left\} \begin{array}{l} \ \\ \ \\ \! ^{\ge 0} \\ \ \end{array} \right.}) - []!{+(0,-.7)} \circ
)))
}
\end{align*}
Here, weights of all vertices of the above graph are omitted. 
\end{lem}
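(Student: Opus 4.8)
The plan is to reconstruct the dual graph of $C+D$ by reversing the contraction $\nu$ into a sequence of blow-ups starting from $(W,D_W)=(\bF _m,M+F)$, organize this sequence by the ruling of $\bF _m$, and read off the caterpillar shape from it.

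First I would collect the global constraints. By Lemma \ref{lem(3-1)} (2) the divisor $C+D$ is an SNC-divisor, and since $V\setminus \Supp(C+D)\cong \bC ^2$ is connected and simply connected while every component is rational, the dual graph of $C+D$ is a tree. Because $\pi$ is the \emph{minimal} resolution, each component of $D$ is a smooth rational curve of self-intersection $\le -2$; hence $C$ is the unique $(-1)$-curve occurring in $C+D$. By Lemma \ref{lem(3-2)} we may take $m\ge 2$ and $\nu (C)\notin M$, and the dual graph of $D_W=M+F$ is the single edge $M-F$ with $(M)^2=-m$ and $(F)^2=0$.

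Next I would analyze the blow-ups. Writing $\nu =f_{\ell}\circ\cdots\circ f_1$ as in Lemma \ref{lem(3-2)}, the inverse $\nu ^{-1}$ is a composition of blow-ups whose final member creates $C$ over $\nu (C)\in F\setminus M$. On an SNC tree a single blow-up either attaches a new leaf (when the center is a smooth point of the boundary) or subdivides an edge (when the center is a node), lowering by one the self-intersection of each component through the center. Composing the ruling $\bF _m\to \bP ^1$ with $\nu$ produces a fibration $p:V\to \bP ^1$ for which $\nu _{\ast}^{-1}(M)$ is a section; the exceptional curves created over the fibre $F$, together with $C$, then lie in the degenerate fibre over $p(F)$ and are attached to the section through $\nu _{\ast}^{-1}(F)$, which accounts for the initial edge $\nu _{\ast}^{-1}(M)-\nu _{\ast}^{-1}(F)$ of the displayed graph.

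From here I would extract the comb. Inside that degenerate fibre, the spine is the unique path in the tree from $\nu _{\ast}^{-1}(F)$ to $C$, and a vertical twig is produced precisely when a spine component is blown up off the spine. The requirement that $C$ be the \emph{only} $(-1)$-curve forces every other component down to self-intersection $\le -2$, which dictates where twigs must appear and accounts for the interior branch vertices carrying a twig of length $\ge 1$, the twig of length $\ge 0$ at $C$, and the connecting chains. An induction on $\ell$, using the minimality of $(W,D_W)$ to control the final contractions, then pins the shape down to the caterpillar in the statement. The main obstacle is the global bookkeeping of this last step: showing that the tree is exactly a caterpillar with the stated twig multiplicities, rather than a more ramified tree, and that the section $\nu _{\ast}^{-1}(M)$ is attached only at $\nu _{\ast}^{-1}(F)$ (equivalently, that no point of $M$ is blown up). This is precisely where one invokes the irreducibility of $\Gamma$ together with Morrow's classification of minimal normal compactifications (\cite{Mor73}), exactly as in the proof of \cite[Lemma 4.6]{KT09}; the accompanying self-intersection computations are then routine.
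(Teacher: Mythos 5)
Your proposal is correct and follows essentially the same route as the paper: both reduce the statement to the observation that $\nu^{-1}$ is a sequence of blow-ups at a single point of $F \backslash (M \cap F)$ and its infinitely near points, with $C$ the unique $(-1)$-curve in $C+D$, and then read off the caterpillar shape from standard machinery. The only (inessential) difference is that the paper outsources the final combinatorial step to Miyanishi's Euclidean and EM transformations, whereas you defer it to Morrow's classification and the proof of \cite[Lemma 4.6]{KT09}.
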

\begin{proof}
By the construction of $\nu$, we know that $\nu$ is the sequence of blow-ups at a point on $F \backslash (M \cap F)$ and its infinitely near points. 
In particular, $\nu$ is a composite of Euclidean transformations and EM transformations (see {\cite[p.\ 92]{Miy78}} and {\cite[p.\ 100]{Miy78}}, for these definitions). 
Thus, we obtain this assertion. 
\end{proof}
\section{Birational transformations of minimal compactifications of the affine plane}\label{4}
Let $(X,\Gamma )$ be a minimal compactification of the affine plane $\bC ^2$, let $\pi :V \to X$ be the minimal resolution, let $D$ be the reduced exceptional divisor of $\pi$, and let $C$ be the proper transform of $\Gamma$ by $\pi$. 
Then we obtain a compactification $(V,C+D)$ of the affine plane $\bC ^2$. 
Assume that $X$ has a non-cyclic quotient singular point, i.e., $D$ admits a branching component. 
Hence, $C$ is a $(-1)$-curve by Lemma \ref{lem(3-1)} (3). 
In particular, we obtain the contraction $f:V \to V'$ of $C$. 
Then there exists uniquely a $(-1)$-curve $C'$ on $\Supp (f_{\ast}(D))$ by Lemma \ref{lem(3-1)} (3). 
Put $D' := f_{\ast}(D)-C'$. 
Thus, $(V',C'+D')$ is a compactification of $\bC ^2$. 

Let $D=\sum _iD_i$ be the decomposition of $D$ into irreducible components. 
Since the intersection matrix of $D$ is negative definite, there exists uniquely an effective $\bQ$-divisor $D^{\sharp}=\sum _i \alpha _iD_i$ on $V$ such that $(D_i \cdot K_V+D^{\sharp})=0$ for every irreducible component $D_i$ of $D$. 
Similarly, if the intersection matrix of $D'$ is negative definite, there exists uniquely an effective $\bQ$-divisor ${D'}^{\sharp}=\sum _{i'} \alpha _{i'}'D_{i'}'$ on $V'$ such that $(D_{i'}' \cdot K_{V'}+{D'}^{\sharp})=0$ for every irreducible component $D_{i'}'$ of $D'$. 

In this section, we study the birational morphism $f:(V,C+D) \to (V',C'+D')$; in particular, the condition that the intersection matrix of $D'$ is negative definite. 
From now on, we consider the following three cases separately. 
\subsection{Case(1)}\label{4-1}
In this subsection, with the same notation at the beginning of \S \ref{4}, assume further that $D$ is connected. 
Then $(C \cdot D)=1$ and $(C' \cdot D') \le 2$. 
More precisely, $C$ meets exactly one irreducible component of $D$, say $D_1$, and $C'$ meets at most two irreducible components of $D'$. 
Then we notice $C' = f_{\ast}(D_1)$. 
In particular, $D_1$ is a $(-2)$-curve. 
Let $A$ and $B$ be weighted dual graphs of connected components of $D'-C'$, where $B = \emptyset$ and $d(B) := 1$ if $(C' \cdot D')=1$. 
We may assume that $A$ contains a branching component. 
Then the weighted dual graphs of $C+D$ and $C'+D'$ are as follows: 
\begin{align*}
\xygraph{A -[r] \circ ([]!{+(0,.2)} {^{D_1}}) -[r] \bullet ([]!{+(0,.2)} {^C})}
\quad {\overset{f}{\longrightarrow}} \quad
\xygraph{A -[r] \bullet ([]!{+(0,.2)} {^{C'}})}
&\quad \text{if $(C' \cdot D')=1$}, \\
\xygraph{\bullet ([]!{+(0,.2)} {^C}) -[l] \circ ([]!{+(0,.2)} {^{D_1}}) (-[]!{+(-1,.3)} A,-[]!{+(-1,-.3)} B)}
\quad {\overset{f}{\longrightarrow}} \quad
\xygraph{\bullet ([]!{+(0,.2)} {^{C'}}) (-[]!{+(-1,.3)} A,-[]!{+(-1,-.3)} B)}
&\quad \text{if $(C' \cdot D')=2$}. 
\end{align*}
Let $D_1'$ and $D_2'$ be irreducible components of $D'$ meeting $C'$, where we consider $D_2':=0$ if $(D' \cdot C')=1$. 
Let $\underline{A}$ and $\underline{B}$ be weighted dual graphs of two connected components of $D'-(D_1'+D_2')$ such that $\underline{A}$ is a subgraph of $A$, where we consider $d(\underline{B}):=0$ (resp.\ $d(\underline{B}):=1$) if $B = \emptyset$ (resp.\ $B \not= \emptyset$ and $\underline{B} = \emptyset$). 
Then we shall observe the relationship between $d(A)$, $d(B)$, $d(\underline{A})$ and $d(\underline{B})$. 
We first notice that $C'+D'$ is a boundary of a compactification of $\bC ^2$. 
By Lemma \ref{lem(2-3-1)}, we have: 
\begin{align}\label{(4-1(1))}
-1 = \left| \begin{array}{ccc}
1 & -1 & -1\\
-d(\underline{A}) & d(A) & 0 \\
-d(\underline{B}) & 0 & d(B) \\
\end{array} \right| = d(A)d(B) - d(A)d(\underline{B}) - d(\underline{A})d(B). 
\end{align}
Moreover, as the intersection matrix of $D$ is negative definite, we obtain: 
\begin{align}\label{(4-1(2))}
\left| \begin{array}{ccc}
2 & -1 & -1\\
-d(\underline{A}) & d(A) & 0 \\
-d(\underline{B}) & 0 & d(B) \\
\end{array} \right| {\underset{(\ref{(4-1(1))})}{=}} d(A)d(B)-1 >0. 
\end{align}

By the above considerations, we obtain the following lemma: 
\begin{lem}\label{lem(4-1)}
We have the following assertions: 
\begin{enumerate}
\item The intersection matrix of $D'$ is negative definite. 
\item If $(C \cdot D^{\sharp})>1$, then $(C' \cdot {D'}^{\sharp}) \ge 1$. 
\item If $(C \cdot D^{\sharp}) \le 1$, then $(C' \cdot {D'}^{\sharp})<1$. 
\end{enumerate}
\end{lem}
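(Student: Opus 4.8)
The plan is to establish the three assertions in order, exploiting the determinant relations (\ref{(4-1(1))}) and (\ref{(4-1(2))}) together with the numerical characterization in Lemma \ref{numerical} and the integrality/positivity facts in Lemma \ref{alpha}. For assertion (1), I would argue that the intersection matrix of $D'$ is negative definite by checking that each connected component of $D'$ has negative definite intersection matrix, using Lemma \ref{cont}. The divisor $D$ is negative definite by hypothesis, and $D'$ is obtained from $D$ by contracting $C$ (via $f$) and removing the resulting $(-1)$-curve $C'$; so the components of $D'$ are the graphs whose determinants are $d(A)$, $d(B)$ (and their sub-pieces). A connected reduced divisor of $(-2)$-or-lower curves is negative definite precisely when its determinant is positive, so it suffices to show $d(A)>0$ and $d(B)>0$. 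This should follow from (\ref{(4-1(2))}), which gives $d(A)d(B)>1>0$, combined with the fact that each of $d(A),d(B)$ is a principal minor determinant of the negative definite matrix $I(D)$ and is therefore positive on its own; alternatively, since $A$ and $B$ are full subgraphs of the negative definite $D$, their intersection matrices are negative definite by restriction, forcing $d(A),d(B)>0$ directly.

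For assertions (2) and (3), the key is to relate the quantity $(C \cdot D^{\sharp})$ on $V$ to $(C' \cdot {D'}^{\sharp})$ on $V'$. First I would compute $(C \cdot D^{\sharp})$ in terms of the coefficient of $D_1$ in $D^{\sharp}$: since $C$ meets only $D_1$ and $(C \cdot D_1)=1$, we have $(C \cdot D^{\sharp}) = \alpha_1$, the coefficient of $D_1$. Similarly, on $V'$, since $C'$ meets $D_1'$ (and possibly $D_2'$), we have $(C' \cdot {D'}^{\sharp}) = \alpha_{1}' + \alpha_{2}'$ (with $\alpha_2'=0$ in the case $(C'\cdot D')=1$). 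The plan is then to express these coefficients using the determinant data: the defining equations $(D_i \cdot K_V + D^{\sharp})=0$ can be solved by Cramer's rule, so each $\alpha_i$ is a ratio of determinants of the weighted dual subgraphs, and $(C \cdot D^{\sharp})=\alpha_1$ will come out as a ratio involving $d(\underline{A})$, $d(\underline{B})$, $d(A)$, $d(B)$ and the local self-intersection $-(D_1)^2=2$. Carrying out the analogous computation on $V'$ should produce $(C' \cdot {D'}^{\sharp})$ as a closely related ratio. The comparison between the two ratios, fed through the relation (\ref{(4-1(1))}), is what converts the hypothesis $\alpha_1 > 1$ (resp.\ $\le 1$) into the desired conclusion on $V'$.

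Concretely, I expect that $(C \cdot D^{\sharp}) = \alpha_1 = 1 - \dfrac{d(A)d(B)-1}{\text{(determinant of the $D$-block)}}$ type of expression, while $(C' \cdot {D'}^{\sharp})$ equals $1 - \dfrac{1}{d(A)d(B)}$ type of expression, so that the critical value $1$ on the $V'$-side corresponds exactly to the sign of $d(A)d(B)-1$ and of the discrepancy between $\alpha_1$ and $1$. The inequalities (\ref{(4-1(1))}) and (\ref{(4-1(2))}) then pin down the direction: (\ref{(4-1(2))}) guarantees $d(A)d(B)>1$, so $(C'\cdot {D'}^{\sharp})=1-\tfrac{1}{d(A)d(B)}<1$ unconditionally in case (3), while in case (2) the strict inequality $\alpha_1>1$ must be shown to force $(C'\cdot {D'}^{\sharp})\ge 1$ through the same algebra.

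The main obstacle I anticipate is the bookkeeping in the Cramer's-rule computation of the coefficients $\alpha_1$ and $\alpha_1'+\alpha_2'$, specifically expressing them cleanly in terms of $d(A),d(B),d(\underline{A}),d(\underline{B})$ so that the two expressions can be compared. The branching structure of $A$ (which contains a branching component) means the relevant minors are not simply products, and I must be careful that the identity (\ref{(4-1(1))}) is used correctly to eliminate the cross-terms; getting the exact threshold $1$ to emerge on both sides, rather than an inequality with the wrong direction or an off-by-one error in the integrality argument via Lemma \ref{alpha}, is where the delicate part lies. I would double-check the boundary cases $(C'\cdot D')=1$ (where $B=\emptyset$, $d(B)=1$) and $\underline{B}=\emptyset$ separately, since the conventions $d(\underline B):=0$ or $1$ are designed precisely to make the determinant formulas uniform, and a sign or convention slip there would invalidate the comparison.
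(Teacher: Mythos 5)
Your treatment of assertion (1) matches the paper's: the intersection matrix of $D'$ coincides with that of $D-D_1$ (the contraction of $C$ only alters the component $D_1$, which becomes $C'$ and is removed), so it is a principal submatrix of the negative definite $I(D)$ and hence negative definite; the determinant-positivity detour is unnecessary but harmless. Your overall strategy for (2) and (3) --- write $(C\cdot D^{\sharp})=\alpha_1$ and $(C'\cdot {D'}^{\sharp})=\alpha_1'+\alpha_2'$, solve for these coefficients by Cramer's rule in terms of the determinants $d(A),d(B),d(\underline{A}),d(\underline{B})$, and compare --- is also the paper's strategy.

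However, the comparison itself, which is the entire content of (2) and (3), is left as a guess, and the guess you record is not tenable. You write that $(C'\cdot{D'}^{\sharp})$ ``equals $1-\frac{1}{d(A)d(B)}$ type of expression'' and conclude it is $<1$ ``unconditionally''; if that were literally true, assertion (2) could never have a satisfied conclusion, yet its hypothesis $(C\cdot D^{\sharp})>1$ does occur (e.g.\ in Example \ref{eg(5-2)(1)}, where $(C\cdot D^{\sharp})=\frac{8}{7}$). The missing step is the precise relation between the two numerators. Cramer's rule gives
\begin{align*}
\alpha_1=\frac{a}{d(A)d(B)-1},\qquad \alpha_1'+\alpha_2'=\frac{a_1'd(B)+a_2'd(A)}{d(A)d(B)},
\end{align*}
with $a,a_1',a_2'$ positive integers (the denominator $d(A)d(B)-1$ being the determinant of $-I(D)$ by (\ref{(4-1(1))}) and (\ref{(4-1(2))})), and the crucial point --- obtained by cofactor expansion along the row of $D_1$, using that $D_1$ is a $(-2)$-curve --- is the identity $a=a_1'd(B)+a_2'd(A)$: the two fractions have the \emph{same} integer numerator and denominators differing by exactly $1$. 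Assertions (2) and (3) then follow purely from integrality: $a>d(A)d(B)-1$ iff $a\ge d(A)d(B)$, i.e.\ $\alpha_1>1$ iff $\alpha_1'+\alpha_2'\ge 1$, and contrapositively $\alpha_1\le 1$ iff $\alpha_1'+\alpha_2'<1$. (The integrality that matters here is that of $a$ and $d(A)d(B)$, not that of the coefficients $\alpha_i$ themselves via Lemma \ref{alpha}, which plays no role in this lemma.) Without the identity $a=a_1'd(B)+a_2'd(A)$ your comparison has no anchor, so as written the proof of (2) and (3) is incomplete.
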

\begin{proof}
In (1), this assertion is obvious because the weighted dual graph of $D'$ is a subgraph of the weighted dual graph of $D$. 

In (2) and (3), since the intersection matrix of $D'$ is negative definite by (1), there exists uniquely an effective $\bQ$-divisor ${D'}^{\sharp}=\sum _{i'} \alpha _{i'}D_{i'}'$ on $V'$ such that $(D_{i'}' \cdot K_{V'}+{D'}^{\sharp})=0$ for every irreducible component $D_{i'}'$ of $D'$. 
Let $\alpha _1$ be the coefficient of $D_1$ of $D^{\sharp}$ and let $\alpha _1'$ and $\alpha _2'$ be coefficients of $D_1'$ and $D_2'$ of ${D'}^{\sharp}$, respectively, where we consider $\alpha _2' := 0$ if $(C' \cdot D')=1$. 
When $(C' \cdot D')=2$, we may assume that the weighted dual graphs $A$ and $B$ correspond to connected components of $D'$ containing $D_1'$ and $D_2'$, respectively. 
By the Cramer formula (cf.\ (\ref{B0})), we can write: 
\begin{align*}
\alpha _1 = \frac{a}{d(A)d(B)-1},\ \alpha _1' + \alpha _2' = \frac{a_1'}{d(A)} + \frac{a_2'}{d(B)} = \frac{a_1'd(B)+a_2'd(A)}{d(A)d(B)}
\end{align*}
for some positive integers $a$, $a_1'$ and $a_2'$, where $a_2'=0$ if $(C' \cdot D')=1$. 
Since $D_1$ is a $(-2)$-curve, we further obtain $a=a_1'd(B)+a_2'd(A)$ by using the cofactor expansion of the determinant. 
Meanwhile, we know that $(C \cdot D^{\sharp}) = \alpha _1$ and $(C' \cdot {D'}^{\sharp}) = \alpha _1'+\alpha _2'$. 
Thus, we obtain assertions (2) and (3) because $d(A)d(B)$ and $a$ are positive integers. 
\end{proof}
\subsection{Case(2)}\label{4-2}
In this subsection, with the same notation at the beginning of \S \ref{4}, assume further that $D'$ is connected but $D$ is not. 
Then $(C \cdot D)=2$ and $(C' \cdot D')=1$. 
More precisely, $C$ meets exactly two irreducible components of $D$, say $D_1$ and $D_2$, and $C'$ meets exactly one irreducible component of $D'$, say $D_1'$. 
Here, we may assume that $D_1' = f_{\ast}(D_1)$. 
Then we notice $C' = f_{\ast}(D_2)$. 
In particular, $D_1$ and $D_2$ are a $(-3)$-curve and a $(-2)$-curve, respectively. 
Let $g:V' \to V''$ be a contraction of $C'$. 
Since $D$ admits a branching component, so does $(g \circ f)_{\ast}(D) = g_{\ast}(D')$ by construction of $f$ and $g$. 
Hence, there exists uniquely a $(-1)$-curve $C''$ on $\Supp (g_{\ast}(D'))$ by Lemma \ref{lem(3-1)} (3) and the configuration of $C'+D'$. 
Put $D'' := g_{\ast}(D')-C''$. 
Thus, $(V'',C''+D'')$ is a compactification of $\bC ^2$. 
Let $A$ and $B$ be weighted dual graphs of two connected components of $D''-C''$, where $B = \emptyset$ and $d(B) := 1$ if $(C'' \cdot D'')=1$. 
We may assume that $A$ contains a branching component. 
Then the weighted dual graphs of $C+D$, $C'+D'$ and $C''+D''$ are as follows: 
\begin{align*}
\xygraph{\circ ([]!{+(0,.2)} {^{D_2}}) -[l] \bullet ([]!{+(0,.2)} {^C}) -[l] \circ ([]!{+(0,.2)} {^{D_1}}) ([]!{+(0,-.3)} {^{-3}}) -[l] A}
\quad {\overset{f}{\longrightarrow}} \quad
\xygraph{\bullet ([]!{+(0,.2)} {^{C'}}) -[l] \circ ([]!{+(0,.2)} {^{D_1'}}) -[l] A}
\quad {\overset{g}{\longrightarrow}} \quad
\xygraph{\bullet ([]!{+(0,.2)} {^{C''}}) -[l] A}
&\quad \text{if $(C'' \cdot D'')=1$}, \\
\xygraph{\circ ([]!{+(0,.2)} {^{D_2}}) -[l] \bullet ([]!{+(0,.2)} {^C}) -[l] \circ ([]!{+(0,.2)} {^{D_1}}) ([]!{+(0,-.3)} {^{-3}}) (-[]!{+(-1,.3)} A,-[]!{+(-1,-.3)} B)}
\quad {\overset{f}{\longrightarrow}} \quad
\xygraph{\bullet ([]!{+(0,.2)} {^{C'}}) -[l] \circ ([]!{+(0,.2)} {^{D_1'}}) (-[]!{+(-1,.3)} A,-[]!{+(-1,-.3)} B)}
\quad {\overset{g}{\longrightarrow}} \quad
\xygraph{\bullet ([]!{+(0,.2)} {^{C''}}) (-[]!{+(-1,.3)} A,-[]!{+(-1,-.3)} B)}
&\quad \text{if $(C'' \cdot D'')=2$}. 
\end{align*}
Let $D_1''$ and $D_2''$ be irreducible components of $D''$ meeting $C''$, where we consider $D_2'':=0$ if $(D'' \cdot C'')=1$. 
Let $\underline{A}$ and $\underline{B}$ be weighted dual graphs of two connected components of $D''-(D_1''+D_2'')$ such that $\underline{A}$ is a subgraph of $A$, where we consider $d(\underline{B}):=0$ (resp.\ $d(\underline{B}):=1$) if $B = \emptyset$ (resp.\ $B \not= \emptyset$ and $\underline{B} = \emptyset$). 
Then we shall observe the relationship between $d(A)$, $d(B)$, $d(\underline{A})$ and $d(\underline{B})$. 
We first notice that $C''+D''$ is a boundary of a compactification of $\bC ^2$. 
By Lemma \ref{lem(2-3-1)}, we have the same formula (\ref{(4-1(1))}). 
Moreover, as the intersection matrix of $D$ is negative definite, we obtain: 
\begin{align}\label{(4-2)}
\left| \begin{array}{ccc}
3 & -1 & -1\\
-d(\underline{A}) & d(A) & 0 \\
-d(\underline{B}) & 0 & d(B) \\
\end{array} \right| {\underset{(\ref{(4-1(1))})}{=}} 2d(A)d(B)-1 >0. 
\end{align}

By the above considerations, we obtain the following lemma: 
\begin{lem}\label{lem(4-2)}
With the notation as above, then the following assertions hold: 
\begin{enumerate}
\item The intersection matrix of $D''$ is negative definite. 
\item If $(C \cdot D^{\sharp})>1$, then $(C'' \cdot {D''}^{\sharp}) \ge 1$. 
\item If $(C \cdot D^{\sharp}) \le 1$, then $(C'' \cdot {D''}^{\sharp})<1$. 
\end{enumerate}
\end{lem}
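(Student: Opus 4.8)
The plan is to mirror the proof of Lemma \ref{lem(4-1)}, the only essential new feature being that here the central curve $D_1$ is a $(-3)$-curve rather than a $(-2)$-curve.

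For assertion (1), I would observe that $\Supp (D'')$ is a set of irreducible components of the original divisor $D$. Indeed, the contractions $f$ (of $C$) and $g$ (of $C'=f_{\ast}(D_2)$) alter only the curves $C,D_1,D_2$ and their images, so every component of $D''$ — that is, of $A$, together with $B$ when $(C'' \cdot D'')=2$ — is a component of $D$ distinct from $D_1,D_2$, carried isomorphically with unchanged self-intersection and unchanged mutual intersection numbers. Hence the intersection matrix of $D''$ is a principal submatrix of that of $D$. Since $D$ is the reduced exceptional divisor of the minimal resolution $\pi$, its intersection matrix is negative definite, and every principal submatrix of a negative definite matrix is negative definite; this gives (1).

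For (2) and (3) I would compute both intersection numbers by Cramer's rule. As $D_2$ is a single $(-2)$-curve forming its own connected component of $D$, its coefficient in $D^{\sharp}$ vanishes, so $(C \cdot D^{\sharp})=\alpha _1$, the coefficient of the $(-3)$-curve $D_1$. Working inside the connected component of $D$ containing $D_1$, whose dual graph is $A - D_1 - B$ and whose determinant equals $2d(A)d(B)-1$ by (\ref{(4-2)}), Cramer's rule yields $\alpha _1=\frac{a}{2d(A)d(B)-1}$ for a positive integer $a$. On the other hand, since $D''=A \sqcup B$ is disconnected with $C''$ meeting $D_1'' \in A$ and $D_2'' \in B$, I obtain $(C'' \cdot {D''}^{\sharp})=\frac{a_1''}{d(A)}+\frac{a_2''}{d(B)}=\frac{a_1''d(B)+a_2''d(A)}{d(A)d(B)}$ for positive integers $a_1'',a_2''$ (with $a_2''=0$ and $d(B)=1$ when $(C'' \cdot D'')=1$).

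The crux is the identity relating the two numerators. Expanding the determinant defining $a$ along the column that has been replaced by the vector $\bigl( (K_V \cdot D_i) \bigr)_i$, the contributions of the rows indexed by $A$ and by $B$ reproduce, exactly as in Lemma \ref{lem(4-1)}, the terms $a_1''d(B)$ and $a_2''d(A)$; the new contribution is that of the $D_1$-row, whose cofactor equals $d(A)d(B)$ and whose entry is now $(K_V \cdot D_1)=-2-(D_1)^2=1$ because $D_1$ is a $(-3)$-curve. This produces
\begin{align*}
a = a_1''d(B)+a_2''d(A)+d(A)d(B).
\end{align*}
Setting $P:=d(A)d(B)>0$ and $N:=a_1''d(B)+a_2''d(A) \in \bZ _{\ge 0}$, we get $(C \cdot D^{\sharp})=\frac{N+P}{2P-1}$ and $(C'' \cdot {D''}^{\sharp})=\frac{N}{P}$. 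For (2), the inequality $(C \cdot D^{\sharp})>1$ forces $N+P>2P-1$, hence $N \ge P$ and $(C'' \cdot {D''}^{\sharp}) \ge 1$; for (3), $(C \cdot D^{\sharp}) \le 1$ forces $N+P \le 2P-1$, hence $N<P$ and $(C'' \cdot {D''}^{\sharp})<1$. The step I expect to be the main obstacle is the bookkeeping in this cofactor expansion: isolating the extra summand $d(A)d(B)$ coming from $(K_V \cdot D_1)=1$, verifying the signs, and confirming the reduction of the full component determinant to the $3 \times 3$ form $2d(A)d(B)-1$ recorded in (\ref{(4-2)}).
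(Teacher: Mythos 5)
Your proof is correct and follows essentially the same route as the paper's: assertion (1) via the principal-submatrix observation, and (2)--(3) via Cramer's rule together with the cofactor-expansion identity $a = d(A)d(B) + a_1''d(B) + a_2''d(A)$, whose extra summand $d(A)d(B)$ indeed comes from $(K_V\cdot D_1)=1$ for the $(-3)$-curve $D_1$. Your explicit remark that the coefficient of the isolated $(-2)$-curve $D_2$ in $D^{\sharp}$ vanishes, so that $(C\cdot D^{\sharp})=\alpha_1$, is a small point the paper leaves implicit, but otherwise the arguments coincide.
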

\begin{proof}
In (1), this assertion is obvious because the weighted dual graph of $D'$ is a subgraph of the weighted dual graph of $D$. 

In (2) and (3), since the intersection matrix of $D''$ is negative definite by (1), there exists uniquely an effective $\bQ$-divisor ${D''}^{\sharp}=\sum _{i''} \alpha _{i''}D_{i''}''$ on $V''$ such that $(D_{i''}'' \cdot K_{V''}+{D''}^{\sharp})=0$ for every irreducible component $D_{i''}''$ of $D''$. 
Let $\alpha _1$ be the coefficient of $D_1$ of $D^{\sharp}$ and let $\alpha _1''$ and $\alpha _2''$ be coefficients of $D_1''$ and $D_2''$ of ${D''}^{\sharp}$, respectively, where we consider $\alpha _2'' := 0$ if $(C'' \cdot D'')=1$. 
When $(C'' \cdot D'')=2$, we may assume that the weighted dual graphs $A$ and $B$ correspond to connected components of $D''$ containing $D_1''$ and $D_2''$, respectively. 
By the Cramer formula (cf.\ (\ref{B0})), we can write: 
\begin{align*}
\alpha _1 = \frac{a}{2d(A)d(B)-1},\ \alpha _1'' + \alpha _2'' = \frac{a_1''}{d(A)} + \frac{a_2''}{d(B)} = \frac{a_1''d(B)+a_2''d(A)}{d(A)d(B)}
\end{align*}
for some positive integers $a$, $a_1''$ and $a_2''$, where $a_2''=0$ if $(C'' \cdot D'')=1$. 
Since $D_1$ is a $(-3)$-curve, we further obtain $a=d(A)d(B) + a_1''d(B)+a_2''d(A)$ by using the cofactor expansion of the determinant. 
Meanwhile, we know that $(C \cdot D^{\sharp}) = \alpha _1$ and $(C'' \cdot {D''}^{\sharp}) = \alpha _1''+\alpha _2''$. 
Thus, we obtain assertions (2) and (3) because $d(A)d(B)$ and $a$ are positive integers. 
\end{proof}
At the end of this subsection, we note that the intersection matrix of $D'$ is not always negative definite. 
Indeed, we can construct the example as follows: 
\begin{eg}
Let $M$ and $F$ be the minimal section and a fiber of the $\bP ^1$-bundle $\bF _2 \to \bP ^1_{\bC}$, respectively, and let $\nu :V \to \bF _2$ be the birational morphism such that $\nu ^{\ast}(M+F)_{\rm red.} = C+D$, where $C$ is a $(-1)$-curve and the weighted dual graph of $C+D$ is as follows: 
\begin{align*}
\xygraph{
\circ ([]!{+(0,.3)} {^{\nu _{\ast}^{-1}(M)}}) -[r] \circ ([]!{+(0,.3)} {^{\nu _{\ast}^{-1}(F)}}) -[r] \circ (-[d] \circ ,-[r] \circ -[r] \circ -[r] \circ -[r] \circ -[r] \circ ([]!{+(0,-.3)} {^{-3}}) -[r] \bullet ([]!{+(0,.25)} {^C}) -[r] \circ}
\end{align*}
Namely, $(V,C+D)$ is a compactification of $\bC ^2$. 
Notice that an absolute value of the determinant of the intersection matrix of $D$ is equal to $2$. 
Hence, the intersection matrix of $D$ is negative definite; in particular, we obtain the contraction $\pi :V \to X$ of $D$ and a minimal compactification $(X,\pi _{\ast}(C))$ of $\bC ^2$. 

Let $f:V \to V'$ be a contraction of $C$, let $C'$ be the unique $(-1)$-curve on $\Supp (f_{\ast}(D))$ and let us put $D' := f_{\ast}(D)-C'$. 
Then the weighted dual graph of $C'+D'$ is as follows: 
\begin{align*}
\xygraph{
\circ -[r] \circ -[r] \circ (-[d] \circ ,-[r] \circ -[r] \circ -[r] \circ -[r] \circ -[r] \circ -[r] \bullet ([]!{+(0,.25)} {^{C'}})}
\end{align*}
By the configuration of $D'$, the intersection matrix of $D'$ is not negative define. 
\end{eg}
\subsection{Case(3)}\label{4-3}
In this subsection, with the same notation at the beginning of \S \ref{4}, assume further that $D$ and $D'$ are not connected. 
Then $(C \cdot D)=2$ and $(C' \cdot D')=2$. 
More precisely, $C$ meets exactly two irreducible components of $D$, say $D_1$ and $D_2$, $C'$ meets exactly two irreducible components of $D'$, say $D_1'$ and $D_2'$. 
Here, we may assume that $D_2' = f_{\ast}(D_2)$. 
Then we notice $C' = f_{\ast}(D_1)$. 
In particular, $D_1$ is a $(-2)$-curve and $m := -(D_2)^2 \ge 3$. 
Let $A$ and $B$ be weighted dual graphs of two connected components of $D'-C' - D_2'$, where $d(B) := 1$ if $B = \emptyset$. 
We notice $A \not= \emptyset$. 
Then the weighted dual graphs of $C+D$ and $C'+D'$ are as follows: 
\begin{align*}
\xygraph{A -[r] \circ ([]!{+(0,.2)} {^{D_1}}) -[r] \bullet ([]!{+(0,.2)} {^C}) -[r] \circ ([]!{+(0,.2)} {^{D_2}}) ([]!{+(0,-.3)} {^{-m}}) -[r] B}
\quad {\overset{f}{\longrightarrow}} \quad
\xygraph{A -[r] \bullet ([]!{+(0,.2)} {^{C'}}) -[r] \circ ([]!{+(0,.2)} {^{D_2'}}) ([]!{+(0,-.3)} {^{-(m-1)}}) -[r] B}
\end{align*}

Let $D_1'$ and $D_2'$ be irreducible components of $D'$ meeting $C'$, and let $\underline{A}$ and $\overline{B}$ be weighted dual graphs of two connected components of $D'-(D_1'+D_2')$ such that $\underline{A}$ is a subgraph of $A$, where we consider $d(\underline{A}) := 1$ (resp.\ $d(\overline{B}) := 0$, $d(\overline{B}) := 1$) if $\underline{A} = \emptyset$ (resp.\ $B = \emptyset$, $B \not= \emptyset$ and $\overline{B} = \emptyset$). 
Then we shall observe the relationship between $d(A)$, $d(B)$, $d(\underline{A})$ and $d(\overline{B})$. 
We first notice that $C'+D'$ is a boundary of a compactification of $\bC ^2$. 
By Lemma \ref{lem(2-3-1)}, we have: 
\begin{align}\label{(4-3)(1)}
\begin{split}
-1&= \left| \begin{array}{cccc}
d(A) & -d(\underline{A}) & 0 & 0 \\
-1 & 1 & -1 & 0 \\
0 & -1 & m-1 & -1 \\
0 & 0 & -d(\overline{B}) & d(B) \\
\end{array} \right|  \\
&= (m-2) d(A) d(B) - d(A)d(\overline{B}) - (m-1)d(\underline{A})d(B) + d(\underline{A})d(\overline{B}). 
\end{split}
\end{align}
Moreover, as the intersection matrix of $D$ is negative definite, we obtain: 
\begin{align}\label{(4-3)(0)}
2d(A) - d(\underline{A})>0,\ md(B) - d(\overline{B})>0. 
\end{align}

By the above considerations, we obtain the following lemma: 
\begin{lem}\label{lem(4-3)}
With the notation as above, then the following assertions hold: 
\begin{enumerate}
\item The intersection matrix of $D'$ is negative definite. 
\item If $(C \cdot D^{\sharp})>1$, then $(C' \cdot {D'}^{\sharp}) \ge 1$. 
\item If $(C \cdot D^{\sharp})=1$, then $(C' \cdot {D'}^{\sharp}) \le 1$. 
\item If $(C \cdot D^{\sharp})<1$, then $(C' \cdot {D'}^{\sharp})<1$. 
\end{enumerate}
\end{lem}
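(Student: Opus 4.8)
The plan is to prove Lemma~\ref{lem(4-3)} along exactly the same lines as Lemmas~\ref{lem(4-1)} and \ref{lem(4-2)}: assertion (1) by analysing the two connected components of $D'$ separately, and assertions (2)--(4) by writing $(C \cdot D^{\sharp})$ and $(C' \cdot {D'}^{\sharp})$ as sums of coefficients computed by Cramer's rule and comparing them with the threshold value $1$. Throughout I will use only the two determinant relations (\ref{(4-3)(1)}) and (\ref{(4-3)(0)}) already recorded. Since $D_2'$ is a leaf of the component $D_2'+B$ of $D'$ (it meets $B$ in a single vertex and its only other neighbour $C'$ lies outside $D'$), expanding along $D_2'$ gives $d(D_2'+B) = (m-1)d(B)-d(\overline{B})$, matching the bottom-right block of (\ref{(4-3)(1)}).

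For (1), the component $A$ of $D'$ is a connected subgraph of $D$, so its intersection matrix is a principal submatrix of the negative definite $I(D)$ and is negative definite. The other component $D_2'+B$ is \emph{not} a subgraph of $D$, because $D_2'$ has self-intersection $-(m-1)$ while the corresponding curve $D_2$ in $D$ has self-intersection $-m$; this is the genuinely new point. I would observe that $I(D_2'+B)$ is obtained from the negative definite matrix $I(D_2+B)$ by adding $1$ to the single diagonal entry indexed by $D_2$. By eigenvalue interlacing this rank-one positive perturbation has at most one non-negative eigenvalue, so $D_2'+B$ is negative definite if and only if $d(D_2'+B)>0$. It therefore suffices to prove $d(D_2'+B)>0$.

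To do so, I would rearrange (\ref{(4-3)(1)}) into the identity
\begin{align*}
d(D_2'+B)\,\bigl(d(A)-d(\underline{A})\bigr) = d(A)d(B)-1 .
\end{align*}
As $A \neq \emptyset$ and every component of $D$ has self-intersection $\le -2$, we have $d(A)\ge 2$ and $d(B)\ge 1$, so the right-hand side is $\ge 1>0$. Suppose $d(A)-d(\underline{A})\le 0$; then the identity forces $d(A)-d(\underline{A})<0$ and $d(D_2'+B)<0$. Substituting $d(D_2'+B)=(d(A)d(B)-1)/(d(A)-d(\underline{A}))$ into the inequality $d(D_2'+B)>-d(B)$ coming from $md(B)-d(\overline{B})>0$ in (\ref{(4-3)(0)}) yields $(2d(A)-d(\underline{A}))d(B)<1$, which contradicts $2d(A)-d(\underline{A})\ge 1$ (again from (\ref{(4-3)(0)})) together with $d(B)\ge 1$. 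Hence $d(A)-d(\underline{A})>0$, so $d(D_2'+B)>0$, and (1) follows. For (2)--(4), since $C$ meets $D_1,D_2$ and $C'$ meets $D_1',D_2'$, I would write $(C \cdot D^{\sharp})=\alpha_{D_1}+\alpha_{D_2}$ and $(C' \cdot {D'}^{\sharp})=\alpha'_{D_1'}+\alpha'_{D_2'}$, and compute each coefficient by Cramer's rule over the relevant component ($A+D_1$ and $D_2+B$ on $V$; $A$ and $D_2'+B$ on $V'$), exactly as in Lemmas~\ref{lem(4-1)} and \ref{lem(4-2)}. Using that $D_1$ is a $(-2)$-curve, $D_2$ a $(-m)$-curve and $D_2'$ a $(-(m-1))$-curve, the cofactor expansions relate the numerators on $V$ and $V'$, and clearing denominators reduces each of (2), (3), (4) to an inequality in $d(A),d(B),d(\underline{A}),d(\overline{B})$ that follows from (\ref{(4-3)(1)}) and (\ref{(4-3)(0)}). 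The appearance of three sub-cases (rather than the two of Cases~(1) and (2)) is precisely because both $C$ and $C'$ now meet two boundary components, so the comparison near the value $1$ can move in either direction by a controlled amount.

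The main obstacle is assertion (1). Because $D'$ is no longer a subgraph of $D$, negative definiteness is not automatic, and one must rule out configurations for which $d(A)\le d(\underline{A})$ and hence $d(D_2'+B)$ would be negative. The subtle point is that such configurations are excluded not by either determinant relation alone, but only by combining the compactification identity (\ref{(4-3)(1)}) with \emph{both} negative-definiteness inequalities of (\ref{(4-3)(0)}) simultaneously, as in the argument displayed above; everything else is the same bookkeeping as in the two previous cases.
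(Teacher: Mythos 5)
Your reduction of assertion (1) to the single inequality $d(D_2'+B)=(m-1)d(B)-d(\overline{B})>0$ is sound (the interlacing argument for the rank-one diagonal perturbation is a clean way to see that only the determinant needs checking), and your derivation of a contradiction from $d(A)-d(\underline{A})\le 0$ using (\ref{(4-3)(1)}) together with \emph{both} inequalities of (\ref{(4-3)(0)}) is correct as far as it goes. The gap is the step ``$A\neq\emptyset$ and all self-intersections are $\le -2$, hence $d(A)\ge 2$'': this implication is false in general (an $E_8$-configuration of $(-2)$-curves is negative definite with determinant $1$), and your argument collapses precisely when $d(A)d(B)=1$, since then the right-hand side of your identity is $0$ and nothing forces $d(D_2'+B)>0$. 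The paper closes exactly this loophole by invoking the structure theorem (Lemma \ref{lem(3-3)}): $A$ is an \emph{admissible twig}, i.e.\ a linear chain, whence $d(A)\ge 2$ whenever $A\neq\emptyset$ (and also $d(A)-d(\underline{A})\ge 1$, which you instead recover from the determinant relations). You must import that structural input; it cannot be replaced by the purely numerical hypotheses you list.

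For (2)--(4) you give only a plan, and the plan misses the one genuinely delicate point. Cramer's rule and cofactor expansion do yield $a_1=a_1'$ and $a_2=d(B)+a_2'$, and ``clearing denominators'' then gives the identity (\ref{(4-3)(5)}), namely $d(A)\{(m-1)d(B)-d(\overline{B})\}(1-\alpha_1'-\alpha_2')=d(A)\{md(B)-d(\overline{B})\}(1-\alpha_1-\alpha_2)+\alpha_1$, which proves (3) and (4) at once. But it does \emph{not} prove (2): when $1-\alpha_1-\alpha_2<0$ the extra term $+\alpha_1>0$ pushes the right-hand side in the wrong direction, so no conclusion about the sign of $1-\alpha_1'-\alpha_2'$ follows from this identity. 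The paper proves (2) by the contrapositive combined with an integrality trick: if $(C'\cdot {D'}^{\sharp})<1$, then $d(A)\{(m-1)d(B)-d(\overline{B})\}(1-\alpha_1'-\alpha_2')$ is a \emph{positive integer} and hence $\ge 1$ (this is (\ref{(4-3)(4)})), and only this strict quantitative lower bound makes the subsequent estimate close up to give $(C\cdot D^{\sharp})<1$. Without isolating that step, assertion (2) is not established.
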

\begin{proof}
In (1), suppose on the contrary that the intersection matrix of $D'$ is not negative definite. 
Since the weighted dual graph of the connected component of $D'$ with $D_1'$ is a subgraph of the weighted dual graph of $D$, we infer from $d(A) > 0$ that $(m-1)d(B)-d(\overline{B}) \le 0$. 
Then we obtain $d(B) \ge 1$ by virtue of ({\ref{(4-3)(0)}}). 
On the other hand, we know that $A$ is an admissible twig by the assumption and Lemma \ref{lem(3-3)}. 
In particular, $d(A) \ge 1$ and $d(A)-d(\underline{A}) \ge 1$. 
Since the formula ({\ref{(4-3)(1)}}) implies: 
\begin{align*}
-1= \{ d(A)-d(\underline{A})\}\{ (m-1)d(B)-d(\overline{B})\} - d(A)d(B), 
\end{align*}
we thus obtain $(m-1)d(B)-d(\overline{B})=0$ and $d(A)=d(B)=1$. 
Here, $d(A)=1$ implies $A=\emptyset$ because $A$ is an admissible twig. 
However, $D'$ is then connected. 
This is a contradiction to the assumption. 
The assertion (1) is thus proved. 

In (2), since the intersection matrix of $D'$ is negative definite by (1), we see: 
\begin{align}\label{(4-3)(2)}
d(A)\{ (m-1)d(B) - d(\overline{B})\} >0. 
\end{align}
Moreover, there exists uniquely an effective $\bQ$-divisor ${D'}^{\sharp}=\sum _{i'} \alpha _{i'}D_{i'}'$ on $V'$ such that $(D_{i'}' \cdot K_{V'}+{D'}^{\sharp})=0$ for every irreducible component $D_{i'}'$ of $D'$. 
Let $\alpha _1$ and $\alpha _2$ be coefficients of $D_1$ and $D_2$ of $D^{\sharp}$, respectively, and let $\alpha _1'$ and $\alpha _2'$ be coefficients of $D_1'$ and $D_2'$ of ${D'}^{\sharp}$, respectively. 
We may assume that the weighted dual graphs $A$ and $B$ correspond to connected components of $D'$ containing $D_1'$ and $D_2'$, respectively. 
By the Cramer formula (cf.\ (\ref{B0})), we can write: 
\begin{align*}
\alpha _1 = \frac{a_1}{2d(A)-d(\underline{A})},\ \alpha _2 = \frac{a_2}{md(B)-d(\overline{B})},\ 
\alpha _1' = \frac{a_1'}{d(A)},\ \alpha _2' = \frac{a_2'}{(m-1)d(B)-d(\overline{B})}
\end{align*}
for some positive integers $a_1$, $a_2$, $a_1'$ and $a_2'$. 
Since $D_1$ is a $(-2)$-curve, we further obtain $a_1 = a_1'$ by using the cofactor expansion of the determinant. 
Moreover, we know $a_2 = d(B)+a_2'$ by using the cofactor expansion of the determinant again. 
Hence, we see: 
\begin{align}\label{(4-3)(3)}
\alpha _1 = \frac{a_1'}{2d(A)-d(\underline{A})},\ \alpha _2 = \frac{d(B)+a_2'}{md(B)-d(\overline{B})},\ 
\alpha _1' = \frac{a_1'}{d(A)},\ \alpha _2' = \frac{a_2'}{(m-1)d(B)-d(\overline{B})}
\end{align}
Meanwhile, we note that $(C \cdot D^{\sharp}) = \alpha _1+\alpha _2$ and $(C'' \cdot {D''}^{\sharp}) = \alpha _1''+\alpha _2''$. 

Now, in order to show (2), we assume that $(C' \cdot {D'}^{\sharp})<1$. 
In other words, we have: 
\begin{align*}
1-\alpha _1'-\alpha _2' = 1 - \frac{a_1'}{d(A)} -\frac{a_2'}{(m-1)d(B)-d(\overline{B})} > 0. 
\end{align*}
More strictly, we obtain: 
\begin{align}\label{(4-3)(4)}
1-\alpha _1'-\alpha _2' \ge \frac{1}{d(A) \{ (m-1)d(B)-d(\overline{B})\}}
\end{align}
because $d(A) \{ (m-1)d(B)-d(\overline{B})\} (1-\alpha _1'-\alpha _2')$ is a positive integer. 
Hence, we have: 
\begin{align*}
&\{ 2d(A)-d(\underline{A})\} \{ md(B)-d(\overline{B})\} (1-\alpha _1-\alpha _2) \\
&\quad{\underset{(\ref{(4-3)(3)})}{=}} \{ 2d(A)-d(\underline{A})\} \{ md(B)-d(\overline{B})\} \\
&\quad\qquad- \{ md(B)-d(\overline{B})\}a_1' - \{ 2d(A)-d(\underline{A})\} \{ d(B)+a_2'\} \\
&\quad= \{ 2d(A)-d(\underline{A})\} \{ md(B)-d(\overline{B})\} \\
&\quad\qquad- \{ md(B)-d(\overline{B})\} a_1' - \{ 2d(A)-d(\underline{A})\}d(B) - \{ 2d(A)-d(\underline{A})\} a_2' \\
&\quad{\underset{(\ref{(4-3)(3)})}{=}} \{ 2d(A)-d(\underline{A})\} \{ md(B)-d(\overline{B})\} - \{ 2d(A)-d(\underline{A})\}d(B) \\
&\quad\qquad- \{ md(B)-d(\overline{B})\}d(A)\alpha _1' - \{ 2d(A)-d(\underline{A})\} \{ (m-1)d(B)-d(\overline{B})\} \alpha _2' \\
&\quad= d(A)\{ md(B)-d(\overline{B})\}(1-\alpha _1'-\alpha _2') \\
&\quad\qquad+\{(m-2) d(A) d(B) - d(A)d(\overline{B}) - (m-1)d(\underline{A})d(B) + d(\underline{A})d(\overline{B})\} (1-\alpha _2') \\
&\quad{\underset{(\ref{(4-3)(1)}),\,(\ref{(4-3)(4)})}{\ge}} \frac{md(B)-d(\overline{B})}{(m-1)d(B)-d(\overline{B})} -(1-\alpha _2') \\
&\quad= \frac{d(B)}{(m-1)d(B)-d(\overline{B})} +\alpha _2' \\
&\quad>0. 
\end{align*}
This implies that $(C \cdot {D}^{\sharp}) = \alpha _1+ \alpha _2<1$. 

In (3) and (4), with the same notation as in (3), we have: 
\begin{align}\label{(4-3)(5)}
\begin{split}
&d(A) \{ (m-1)d(B)-d(\overline{B})\} (1-\alpha _1'-\alpha _2') \\
&\quad{\underset{(\ref{(4-3)(3)})}{=}} d(A) \{ (m-1)d(B)-d(\overline{B})\} - \{ (m-1)d(B)-d(\overline{B})\} a_1' - d(A)a_2' \\
&\quad= d(A) \{ (m-1)d(B)-d(\overline{B})\} \\
&\quad\qquad- \{ (m-1)d(B)-d(\overline{B})\} a_1' - d(A)\{ d(B)+a_2'\} + d(A)d(B) \\
&\quad{\underset{(\ref{(4-3)(3)})}{=}} d(A) \{ (m-1)d(B)-d(\overline{B})\}  + d(A)d(B)\\
&\quad\qquad- \{ (m-1)d(B)-d(\overline{B})\} \{ 2d(A)-d(\underline{A})\} \alpha _1- d(A) \{ md(B)-d(\overline{B})\} \alpha _2\\
&\quad= d(A) \{ md(B)-d(\overline{B})\} (1-\alpha _1-\alpha _2)\\
&\quad\qquad-\{(m-2) d(A) d(B) - d(A)d(\overline{B}) - (m-1)d(\underline{A})d(B) + d(\underline{A})d(\overline{B})\} \alpha _1 \\
&\quad{\underset{(\ref{(4-3)(1)})}{=}} d(A) \{ md(B)-d(\overline{B})\} (1-\alpha _1-\alpha _2) + \alpha _1\\
&\quad\ge d(A) \{ md(B)-d(\overline{B})\} (1-\alpha _1-\alpha _2). 
\end{split}
\end{align}

Assume that $(C \cdot D^{\sharp})=\alpha _1 + \alpha _2=1$. 
By virtue of (\ref{(4-3)(2)}) and (\ref{(4-3)(5)}), we then obtain $1-\alpha _1'-\alpha _2' \ge 0$. 
This implies that $(C' \cdot {D'}^{\sharp}) \le 1$. 
Assertion (3) is thus proved. 

Assume that $(C \cdot D^{\sharp})=\alpha _1 + \alpha _2<1$. 
By virtue of (\ref{(4-3)(2)}) and (\ref{(4-3)(5)}), we then obtain $1-\alpha _1'-\alpha _2' > 0$. 
This implies that $(C' \cdot {D'}^{\sharp}) < 1$. 
Assertion (4) is thus proved. 
\end{proof}
\section{Proof of Theorem \ref{main}}\label{5}
In this section, we will prove Theorem \ref{main}. 
Notice that this theorem is a consequence of the following: 
\begin{thm}\label{main(2)}
Let $(X,\Gamma )$ be a minimal compactification of the affine plane $\bC ^2$. 
Then we have the following:  
\begin{enumerate}
\item If the canonical divisor $K_X$ is numerically trivial, then $X$ has a singular point, which is not rational. 
\item If the canonical divisor $K_X$ is numerically ample, then $X$ has a singular point, which is not rational. 
\end{enumerate}
\end{thm}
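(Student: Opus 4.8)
The strategy is to apply Artin's criterion (Lemma \ref{rational sing}) to the connected component $E$ of the reduced exceptional divisor $D$ that carries a branching component. First I would pin down the setup. Since $K_X$ is nef we have $\Sing(X) \neq \emptyset$ (otherwise $X = \bP^2$ by {\cite{RV60}} and $K_{\bP^2}$ is not nef), and $V$ is not a Hirzebruch surface: if $V = \bF_m$ then $D$ would be a single $(-m)$-curve and $X$ would carry only a log terminal singularity, whence $K_X$ would fail to be nef by Theorem \ref{KT} (1). Hence $C$ is a $(-1)$-curve by Lemma \ref{lem(3-1)} (3). Moreover $D$ must contain a branching component: if every component of $D$ were a chain then all singularities of $X$ would be log canonical and Theorem \ref{KT} (1) would again contradict nefness. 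Thus Lemma \ref{lem(3-3)} applies and furnishes the unique connected component $E$ of $D$ with a branching component. Writing $E^{\sharp}$ for the part of $D^{\sharp}$ supported on $E$, the defining relations of $D^{\sharp}$ give $(E_i \cdot (K_V + E^{\sharp})) = (E_i \cdot (K_V + D^{\sharp})) = 0$ for every component $E_i$ of $E$ (components of the other part of $D$ being disjoint from $E$), so that $E^{\sharp} \cdot (K_V + E^{\sharp}) = 0$. Consequently, \emph{once $E^{\sharp}$ is known to be an integral divisor}, $Z := E^{\sharp}$ is an effective $\bZ$-divisor with $\Supp(Z) = \Supp(E)$ and, by adjunction, $p_a(Z) = 1 + \tfrac{1}{2}\bigl(Z \cdot (K_V + Z)\bigr) = 1 > 0$; Lemma \ref{rational sing} then shows that $\pi(E)$ is an irrational singular point of $X$. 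The whole theorem therefore reduces to the integrality of $E^{\sharp}$.

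For assertion (1), assume $K_X \equiv 0$, so $(C \cdot D^{\sharp}) = 1$ by Lemma \ref{numerical} (2). I would establish the integrality of $E^{\sharp}$ by an induction seeded at the component met by $C$. If $D$ is connected, $C$ meets a single component $E_1$ transversally and its coefficient is exactly $(C \cdot D^{\sharp}) = 1 \in \bZ$; if $D$ is disconnected, $C$ bridges $E$ and the other component, and an integral seed on $E$ is extracted from $(C \cdot D^{\sharp}) = 1$ together with the coprimality of the determinants of the two boundary sides provided by Lemmas \ref{lem(2-3-1)} and \ref{lem(2-3-2)}. From a seed, integrality propagates along every linear stretch of $E$ by repeated use of Lemma \ref{alpha} (2). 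At a branching vertex the naive propagation stalls, and there I would feed in the determinant identities of \S\ref{2-3} together with the Cramer-rule expressions for the coefficients $\alpha_i$ already assembled in the proofs of Lemmas \ref{lem(4-1)}--\ref{lem(4-3)}, which force the relevant numerators to be divisible by the corresponding determinants. This shows that $E^{\sharp}$ is integral and completes assertion (1).

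For assertion (2), assume $K_X$ is numerically ample, so $(C \cdot D^{\sharp}) > 1$ by Lemma \ref{numerical} (3). Here I would reduce to the case just treated. Iterating the contraction of the unique boundary $(-1)$-curve yields a sequence of minimal compactifications whose invariants $(C \cdot D^{\sharp})$ are governed by Lemmas \ref{lem(4-1)}, \ref{lem(4-2)} and \ref{lem(4-3)}: the value stays $\geq 1$ once it starts $> 1$ and can never jump directly from $> 1$ to $< 1$, so, as the process must eventually reach a numerical del Pezzo stage with value $< 1$, it attains the value exactly $1$ at some intermediate stage $\widetilde{f} \colon V \to \widetilde{V}$. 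Then $(\widetilde{V}, \widetilde{C} + \widetilde{D})$ corresponds, by contraction of $\widetilde{D}$, to a minimal compactification $(\widetilde{X}, \widetilde{\Gamma})$ with $K_{\widetilde{X}} \equiv 0$, and assertion (1) provides an integral $\widetilde{Z}$ with $\Supp(\widetilde{Z}) = \Supp(\widetilde{E})$ and $p_a(\widetilde{Z}) = 1$ on the branching component $\widetilde{E}$ of $\widetilde{D}$. The total transform $\widetilde{f}^{\ast}(\widetilde{Z})$ satisfies $p_a(\widetilde{f}^{\ast}(\widetilde{Z})) = p_a(\widetilde{Z}) = 1$ by the projection formula, since $(\widetilde{f}^{\ast}\widetilde{Z})^2 = (\widetilde{Z})^2$ and $(\widetilde{f}^{\ast}\widetilde{Z} \cdot K_V) = (\widetilde{Z} \cdot K_{\widetilde{V}})$; enlarging it over the $\widetilde{f}$-exceptional curves inside the connected configuration $E$ by means of the additivity of $p_a$, I obtain an effective $\bZ$-divisor $Z$ with $\Supp(Z) = \Supp(E)$ and $p_a(Z) \geq 1$. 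Lemma \ref{rational sing} again yields an irrational singular point of $X$.

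The step I expect to be the main obstacle is the integrality of $E^{\sharp}$ at the branching vertices in assertion (1): Lemma \ref{alpha} (2) only transports integrality across linear stretches, so at a genuine branch one is forced to exploit the global arithmetic of the compactification of $\bC^2$, combining the relation $d(\,\cdot\,) = -1$ and the coprimality statement of \S\ref{2-3} with the Cramer-rule coefficients of \S\ref{4}. A secondary technical point, in assertion (2), is to verify that the contraction sequence really passes through the value $(C \cdot D^{\sharp}) = 1$ and that the pulled-back cycle can be arranged to have support exactly $\Supp(E)$ while keeping $p_a = 1$.
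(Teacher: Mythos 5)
Your overall architecture coincides with the paper's: reduce everything to producing an integral cycle $Z$ with $\Supp(Z)=\Supp(E)$ and $p_a(Z)=1$, prove integrality of $E^{\sharp}$ when $K_X\equiv 0$, and in the ample case descend to a $K\equiv 0$ stage via the contraction calculus of \S\ref{4}. Two steps, however, would fail as written. First, in assertion (1) your induction is seeded only at the component met by $C$ and propagated inward; Lemma \ref{alpha} (2) plus determinant coprimality (Lemma \ref{lem(2-3-2)}) does carry you through the intermediate branch vertices $D_0^{(1)},\dots,D_0^{(r)}$, but it stalls at the \emph{last} branch vertex $D_0^{(r+1)}$, which carries two twigs (the one ending at $\nu_\ast^{-1}(M)$ and the one coming from the blow-up at $\nu(C)$) about which the $C$-side data says nothing, and Lemma \ref{lem(2-3-2)} does not apply to that pair since their union does not contract to a single $(-1)$-curve. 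The paper's Claim \ref{claim(5-1)} supplies the missing seed geometrically: since $K_V+D^{\sharp}\equiv 0$ one may intersect $D^{\sharp}$ with the proper transforms of a general fiber and of a section through $\nu(C)$ of $\bF_m\to\bP^1$, getting the integer values $2$ and $m+1$ at the ends of those twigs; this integrality at the top is also what makes the right-hand sides of the Cramer systems for the $A_i$ integral, so your coprimality step silently depends on it.

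Second, the cycle construction in assertion (2) is broken. The total transform $\widetilde{f}^{\ast}(\widetilde{Z})$ contains every $\widetilde{f}$-exceptional curve lying over $\Supp(\widetilde{Z})$, and since $(\widetilde{Z}\cdot\widetilde{C})=1$ the point $\widetilde{f}(C)$ lies on $\Supp(\widetilde{Z})$; hence $C$ itself appears in $\widetilde{f}^{\ast}(\widetilde{Z})$ with positive multiplicity (this is visible in Example \ref{eg(5-2)(1)}). No ``enlargement'' of this divisor can therefore have support equal to $\Supp(E)$, and subtracting the $C$-part destroys the equality $p_a=1$ you are relying on. The paper instead takes the \emph{proper} transform: $Z:=\widetilde{f}^{-1}_{\ast}(\widetilde{Z})+(E-\widetilde{f}^{-1}_{\ast}(\widetilde{Z})_{\mathrm{red}})$, and computes $(Z\cdot K_V+Z)$ directly as a sum of three terms, $0+2(\widetilde{Z}\cdot\widetilde{C})+(-2)=0$, the $+2$ from the single transversal attachment exactly cancelling the $-2$ contributed by the reduced connected tree of remaining components; your additivity-of-$p_a$ bookkeeping does not reproduce this cancellation. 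The reduction to the $K\equiv 0$ stage itself (monotonicity of $(C\cdot D^{\sharp})$ under the contractions of \S\ref{4}, forcing the value to pass through $1$) is correct and is exactly Lemma \ref{lem(5-2)}.
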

In what follows, we shall prove Theorem \ref{main(2)}. 
We thus prepare some notations. 
Let $(X,\Gamma )$ be a minimal compactification of $\bC ^2$, let $\pi :V \to X$ be the minimal resolution, let $D$ be the reduced exceptional divisor of $\pi$, and $C$ be the proper transform of $\Gamma$ by $\pi$, i.e., $C := \pi _{\ast}^{-1}(\Gamma) $. 
Then $(V,C+D)$ is a compactification of $\bC ^2$. 
Let $D=\sum _iD_i$ be the decomposition of $D$ into irreducible components. 
Since the intersection matrix of $D$ is negative definite, there exists uniquely an effective $\bQ$-divisor $D^{\sharp}=\sum _i \alpha _iD_i$ on $V$ such that $(D_i \cdot K_V+D^{\sharp})=0$ for every irreducible component $D_i$ of $D$. 

From now on, we consider two subsections separately. 
More precisely, we will show Theorem \ref{main(2)} (1) and (2) in \S \S \ref{5-1} and \S \S \ref{5-2}, respectively. 
\subsection{Proof of Theorem \ref{main(2)} (1)}\label{5-1}
With the same notation as above, assume further that $K_X$ is numerically trivial. 
Then $K_V+D^{\sharp} \equiv 0$. 
Noting that $X$ has exactly one singular point $x$ worse than log canonical singularities by {\cite[Theorem 1.1]{KT09}}, we consider the following claim: 
\begin{claim}\label{claim(0)}
$D^{\sharp}$ is a $\bZ$-divisor. 
Moreover, $\Supp (D^{\sharp})$ coincides with the exceptional set of the minimal resolution at $x$. 
\end{claim}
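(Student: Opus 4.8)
The plan is to establish the two assertions of the claim separately, deriving both from the single numerical identity $K_V+D^{\sharp}\equiv 0$ together with the structure of the boundary $C+D$. The guiding idea is that $K_X\equiv 0$ makes $-K_V$ numerically equal to $D^{\sharp}$, and that the boundary components form a very rigid basis of the N\'eron--Severi group.

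First I would prove integrality by recognizing the classes of the components of the \emph{full} boundary $C+D$ as a $\bZ$-basis of $\mathrm{NS}(V)=\mathrm{Pic}(V)$. Since $V$ is a smooth rational surface, $\mathrm{NS}(V)$ is torsion-free and unimodular, and numerical equivalence agrees with the $\mathrm{NS}$-class. As $V$ is obtained from $\bF_m$ by blow-ups centered on the boundary (Lemma \ref{lem(3-3)}), the components of $C+D$ generate $\mathrm{NS}(V)$; by Lemma \ref{lem(2-3-1)} their intersection matrix has determinant $\pm 1$, so $[C],[D_1],\dots,[D_n]$ is in fact a $\bZ$-basis. Now $K_V+D^{\sharp}\equiv 0$ gives $[-K_V]=[D^{\sharp}]=\sum_i\alpha_i[D_i]$, while the expansion of the integral class $[-K_V]$ in this basis has integer coordinates and vanishing $[C]$-coordinate. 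Comparing the two expansions, uniqueness of the basis representation forces every $\alpha_i\in\bZ$; thus $D^{\sharp}$ is a $\bZ$-divisor.

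Second I would settle the support by combining integrality with positivity. By Zariski's lemma each $\alpha_i\ge 0$, so now $\alpha_i\in\bZ_{\ge 0}$; by Lemma \ref{alpha}(1) the coefficients on each connected component of $D$ are either all zero or all positive, so $\Supp(D^{\sharp})$ is a union of connected components of $D$. Since $C$ is a $(-1)$-curve (Lemma \ref{lem(3-1)}(3)) that meets each connected component of $D$ in exactly one curve transversally (Lemma \ref{lem(3-1)}(2) and the case analysis of Section \ref{4}), and since $(D^{\sharp}\cdot C)=1$ by Lemma \ref{numerical}(2), the sum of the nonnegative integers $\alpha_i$ taken over the curves $D_i$ meeting $C$ equals $1$. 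Hence exactly one connected component of $D$ lies in the support. Because $x$ is worse than log canonical, its exceptional set is non-canonical and therefore carries strictly positive coefficients, so it must be this component; this identifies $\Supp(D^{\sharp})$ with the exceptional set of the minimal resolution at $x$.

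The hard part is the integrality: a priori $D^{\sharp}$ is only a $\bQ$-divisor whose class in $\mathrm{NS}(V)\otimes\bQ$ happens to be integral, and that alone does \emph{not} force integer coefficients (the coefficients are $I(D)^{-1}$ applied to an integer vector, hence a priori only rational). The decisive step is to upgrade ``integral class'' to ``integral coefficients'' using the unimodularity of the boundary from Lemma \ref{lem(2-3-1)}; once this is available, the support statement follows at once from the arithmetic constraint that $(D^{\sharp}\cdot C)=1$ be a sum of nonnegative integers. (As a byproduct, $K_V+D^{\sharp}\equiv 0$ then yields $p_a(D^{\sharp})=1$ by adjunction, which feeds into the subsequent application of Lemma \ref{rational sing}.)
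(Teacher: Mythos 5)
Your proof is correct, but it takes a genuinely different route from the paper's. The paper proves Claim \ref{claim(0)} by an explicit induction along the dual graph of Lemma \ref{lem(3-3)}: it first computes the coefficients on the far component $D^{(r+1)}$ directly ($\alpha^{(r+1)}_{1,s_{r+1}}=2$ via a general fiber of $\bF_m\to\bP^1_{\bC}$ and $\alpha^{(r+1)}_{2,t_{r+1}}=m+1$ via a section through $\nu(C)$), then propagates integrality and positivity down the graph with Lemma \ref{alpha}, handling each branch point by Cramer's rule together with the coprimality of the determinants $d(A_i)$, $d(B_i)$ of complementary subgraphs (Lemma \ref{lem(2-3-2)}, via Claim \ref{claim(5-2)}). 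You instead obtain integrality in one stroke from lattice theory: the components of $C+D$ are $\rho(V)$ in number and have unimodular intersection matrix by Lemma \ref{lem(2-3-1)}, hence form a $\bZ$-basis of the unimodular lattice $\mathrm{NS}(V)$, so the integral class $-K_V\equiv D^{\sharp}$ must have integer coordinates in that basis. Your support argument (effectivity by Zariski's lemma, the all-or-nothing dichotomy on connected components from Lemma \ref{alpha}(1), the constraint that $(D^{\sharp}\cdot C)=1$ is a sum of nonnegative integers, and the observation that a connected component with vanishing discrepancies would consist of $(-2)$-curves and hence be Du Val) is also sound, and is a streamlined version of the paper's Claim \ref{claim(5-3)}. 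What the paper's longer computation buys is the explicit values of the coefficients (used in Examples \ref{eg(5-1)(1)} and \ref{eg(5-1)(2)} and relevant to the classification problem of Section \ref{6}); what yours buys is brevity and a conceptual explanation of why integrality is automatic. One presentational nit: the ``vanishing $[C]$-coordinate'' of $[-K_V]$ is a consequence of comparing with the expansion $\sum_i\alpha_i[D_i]$, not an a priori fact, but this does not affect the validity of the argument.
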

If Claim \ref{claim(0)} is true, then we can prove Theorem \ref{main(2)} (1) as follows: 
\begin{proof}[Proof of ``Claim \ref{claim(0)} $\Rightarrow$ Theorem \ref{main(2)} (1)'']
Let $Z$ be the connected component of $D^{\sharp}$ such that $\Supp (Z)$ coincides with the exceptional set of the minimal resolution at $x$. 
Since we assume that Claim \ref{claim(0)} is true, every coefficient of $Z$ is a positive integer. 
Thus, we have $p_a(Z) = \frac{1}{2}(Z \cdot Z + K_V) +1 = \frac{1}{2}(Z \cdot D^{\sharp} + K_V)+1 = 1>0$. 
By Lemma \ref{rational sing}, this implies that $x \in X$ is an irrational singular point. 
This completes the proof. 
\end{proof}
Hence, we will show Claim \ref{claim(0)} in what follows. 

Since $X$ has exactly one singular point $x$ worse than log canonical singularities, the weighted dual graph $C+D$ is that as in Lemma \ref{lem(3-3)}. 
In other words, we can write $D = \sum _{i=0}^{r+1}D^{(i)}$ $(\exists \, r \in \bZ _{\ge 0})$, $D^{(0)} := \sum _{j=1}^{s_0}D_{1,j}^{(0)} + \sum _{\ell =1}^{t_0}D_{2,\ell}^{(0)}$ and $D^{(i)} := D_0^{(i)} + \sum _{j=1}^{s_i}D_{1,j}^{(i)} + \sum _{\ell =1}^{t_i}D_{2,\ell}^{(i)}$ $(1 \le i \le r+1)$ such that the weighted dual graph $C+D$ is the following: 
\begin{align*}
\xygraph{
\circ ([]!{+(0,.3)} {^{D_{1,s_{r+1}}^{(r+1)}}}) - []!{+(.7,0)} \cdots - []!{+(.7,0)} \circ ([]!{+(0,.3)} {^{D_{1,1}^{(r+1)}}}) - []!{+(.8,0)} \circ ([]!{+(0,.3)} {^{D_0^{(r+1)}}})
(- []!{+(0,-.6)} \circ ([]!{+(.5,0)} {^{D_{2,1}^{(r+1)}}}) - []!{+(0,-.7)} \vdots - []!{+(0,-.7)} \circ ([]!{+(.5,0)} {^{D_{2,t_{r+1}}^{(r+1)}}}),
- []!{+(.9,0)} \circ ([]!{+(0,.3)} {^{D_{1,s_r}^{(r)}}}) - []!{+(.7,0)} \cdots - []!{+(.7,0)} \circ ([]!{+(0,.3)} {^{D_{1,1}^{(r)}}}) - []!{+(.9,0)} \circ ([]!{+(0,.3)} {^{D_0^{(r)}}})
(- []!{+(0,-.6)} \circ ([]!{+(.4,0)} {^{D_{2,1}^{(r)}}}) - []!{+(0,-.7)} \vdots - []!{+(0,-.7)} \circ ([]!{+(.4,0)} {^{D_{2,t_{r}}^{(r)}}}),
-[r] {\cdots \cdots \cdots} -[r] \circ ([]!{+(0,.3)} {^{D_0^{(1)}}})
(- []!{+(0,-.6)} \circ ([]!{+(.4,0)} {^{D_{2,1}^{(1)}}}) - []!{+(0,-.7)} \vdots - []!{+(0,-.7)} \circ ([]!{+(.4,0)} {^{D_{2,t_{1}}^{(1)}}}),
- []!{+(.9,0)} \circ ([]!{+(0,.3)} {^{D_{1,s_0}^{(0)}}}) - []!{+(.7,0)} \cdots - []!{+(.7,0)} \circ ([]!{+(0,.3)} {^{D_{1,1}^{(0)}}}) - []!{+(.9,0)} \bullet ([]!{+(0,.25)} {^C})
- []!{+(0,-.6)} \circ ([]!{+(.4,0)} {^{D_{2,1}^{(0)}}}) - []!{+(0,-.7)} \vdots - []!{+(0,-.7)} \circ ([]!{+(.4,0)} {^{D_{2,t_0}^{(0)}}})
)))
}
\end{align*}
Here, the above graph satisfies the following conditions: 
\begin{itemize}
\item $s_0,t_0 \ge 0$, where $s_0>0$ if $r=0$. 
\item $s_i \ge 0$, $t_i >0$ for $i=1,\dots ,r$. 
\item $s_{r+1},t_{r+1}>0$. 
\end{itemize}
From now on, for $i=1,\dots ,r$, we set $D_{1,1}^{(i)} := D_0^{(i+1)}$ when $s_i=0$. 
Moreover, for $i=1,\dots ,r+1$, let $\alpha ^{(i)}$ be the coefficient of the irreducible component $D^{(i)}$ of $D^{\sharp}$. 
Similarly, for $i=0,1,\dots ,r+1$, $j = 1,\dots, s_i$ and $\ell = 1,\dots ,t_i$, let $\alpha _{j,\ell}^{(i)}$ be the coefficient of the irreducible component $D_{j,\ell}^{(i)}$ of $D^{\sharp}$, and let $-n_{j,\ell}^{(i)}$ be the self-intersection number of $D_{j,\ell}^{(i)}$. 
\begin{claim}\label{claim(5-1)}
We have $\alpha _0^{(r+1)},\alpha _{1,1}^{(r+1)},\dots ,\alpha _{1,s_{r+1}}^{(r+1)},\alpha _{2,1}^{(r+1)},\dots ,\alpha _{2,t_{r+1}}^{(r+1)} \in \bZ _{>0}$. 
\end{claim}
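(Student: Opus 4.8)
The plan is to turn the numerical condition $K_V + D^{\sharp} \equiv 0$ into a linear system for the coefficients and to solve it on the terminal block $D^{(r+1)}$. Since each $D_i$ is a smooth rational curve with $(D_i)^2 = -n_i$ and $n_i \ge 2$ (the resolution is minimal), adjunction gives $(D_i \cdot K_V) = n_i - 2$, so the defining relations $(D_i \cdot K_V + D^{\sharp}) = 0$ read $(D_i \cdot D^{\sharp}) = 2 - n_i$ for every component. Written out on the tree of $D^{(r+1)}$ these become a recursion relating the coefficient at a vertex to those of its neighbors; I will exploit that $D^{(r+1)}$ is a ``fork'', namely the branching component $D_0^{(r+1)}$ together with two pendant chains (the top chain terminating in the leaf $\nu_{\ast}^{-1}(M) = D_{1,s_{r+1}}^{(r+1)}$ and the bottom chain terminating in $D_{2,t_{r+1}}^{(r+1)}$), attached to the rest of $D$ through the single edge at $D_0^{(r+1)}$.

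First I would establish positivity. Because $D^{\sharp}$ is effective, every coefficient is $\ge 0$. Since $x$ is worse than log canonical, the connected component of $D$ carrying the branching components has a coefficient exceeding $1$, so $D^{\sharp}$ is non-zero there; then Lemma~\ref{alpha}(1) propagates strict positivity along the connected tree, so all coefficients of $D^{(r+1)}$ are $>0$.

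The substantive part is integrality. Along a chain the relation $(D_i \cdot D^{\sharp}) = 2 - n_i$ transports integrality freely from a leaf inward (each degree-two relation expresses one neighbor integrally in terms of the two already known), so it suffices to pin down the two leaf coefficients of $D^{(r+1)}$ and then push inward to $D_0^{(r+1)}$. Here the geometry of Lemma~\ref{lem(3-3)} is decisive: $V$ is obtained from $\bF_m$ by blowing up points of $F \setminus (M \cap F)$ and infinitely near points, so $\nu_{\ast}^{-1}(M)$ is untouched and $-K_V$ is represented by an integral divisor supported on $C + D$ whose coefficient along $\nu_{\ast}^{-1}(M)$ equals $2$ (coming from $-K_{\bF_m} \equiv 2M + (m+2)F$). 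Comparing $D^{\sharp}$ with this integral representative reduces the integrality of the coefficients of $D^{(r+1)}$ to a divisibility statement for the determinants of the two pendant chains, which I will close using $d(C+D) = -1$ (Lemma~\ref{lem(2-3-1)}) together with the coprimality of the determinants of adjacent subgraphs (Lemma~\ref{lem(2-3-2)}), expressing the coefficients by Cramer's rule exactly as in the computations of Section~\ref{4}. With integrality of the leaves in hand, Lemma~\ref{alpha}(2) applied at $D_0^{(r+1)}$ together with the chain relations delivers integrality of every coefficient of $D^{(r+1)}$.

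The main obstacle is precisely this integrality at the free ends of the fork: the chain relations only transport integrality inward, whereas fixing the leaf values themselves requires the global arithmetic of the compactification. This is also why $D^{(r+1)}$ is the correct base case: it is the unique block meeting the section $\nu_{\ast}^{-1}(M)$ that the blow-ups leave alone, which fixes one leaf coefficient outright, after which the remaining blocks can be treated by induction down the spine, feeding the integrality just proved into Lemma~\ref{alpha}(2) at each successive branching component.
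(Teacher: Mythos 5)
There is a genuine gap, and it sits exactly where you locate the difficulty: pinning down the leaf coefficients. Your inward propagation along the chains and through the branching component $D_0^{(r+1)}$ is fine (it is Lemma~\ref{alpha}), and your positivity argument works. But the relation at $D_0^{(r+1)}$ involves the sum $\alpha_{1,1}^{(r+1)}+\alpha_{2,1}^{(r+1)}+(\text{spine term})$, so integrality of the $\alpha_{2,\ell}^{(r+1)}$ cannot be extracted from the branch vertex: the second twig must be pinned down at \emph{its} leaf independently, and for that leaf your proposal offers only ``a divisibility statement for the determinants of the two pendant chains, closed by coprimality and Cramer's rule.'' That machinery is not available here: Lemma~\ref{lem(2-3-2)} requires the two subgraphs to be joined through a $(-1)$-curve meeting each once, with the union contractible to a single $(-1)$-curve; the two twigs of $D^{(r+1)}$ are joined through the branching component $D_0^{(r+1)}$, which is neither a $(-1)$-curve nor of valency two, and the paper's coprimality setup (Claim~\ref{claim(5-2)}) is accordingly built only for the blocks $D^{(0)},\dots,D^{(r)}$, not for $D^{(r+1)}$. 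What the paper actually does for this block is pair $D^{\sharp}\equiv -K_V$ against two auxiliary curves \emph{not} contained in $D$: a general fiber of $\bF_m\to\bP^1$, which meets $D$ only in $\nu_{\ast}^{-1}(M)$ and gives $\alpha_{1,s_{r+1}}^{(r+1)}=(F\cdot D^{\sharp})=(F\cdot -K_V)=2$, and a section of self-intersection $m$ through $\nu(C)$, whose proper transform meets $D$ only in $D_{2,t_{r+1}}^{(r+1)}$ and gives $\alpha_{2,t_{r+1}}^{(r+1)}=m+1$. The second curve is the missing ingredient in your argument.

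A related soft spot: you assert that the coefficient $2$ of $\nu_{\ast}^{-1}(M)$ in an integral representative of $-K_V$ supported on $C+D$ transfers to $\alpha_{1,s_{r+1}}^{(r+1)}=2$, but numerical equivalence of two divisors supported on $C+D$ does not by itself identify their coefficients. The transfer does hold, because $d(C+D)=-1\neq 0$ makes the components of $C+D$ numerically independent, so the representation of any class as a $\bQ$-combination of them is unique; carried to its conclusion this observation would actually force $D^{\sharp}$ to \emph{equal} the integral divisor $\nu^{\ast}(2M+(m+2)F)-\sum_i\mathcal{E}_i$ and would settle integrality of every coefficient at once, with no residual divisibility statement to close. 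As written, your proposal stops short of this and instead defers to a coprimality argument that does not apply to the $D^{(r+1)}$ block, so the integrality of $\alpha_{2,1}^{(r+1)},\dots,\alpha_{2,t_{r+1}}^{(r+1)}$ is not established.
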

\begin{proof}
Let $\nu :V \to W$ be the same as in the assumption of Lemma \ref{lem(3-2)}. 
Then we note $W \simeq \bF _m$ for some $m \ge 2$. 
We may assume that $D_{1,s_{r+1}}^{(r+1)}$ is a proper transform of the minimal section on $W \simeq \bF _m$ and $D_{2,t_{r+1}}^{(r+1)}$ is a proper transform of the exceptional curve of the blowing-up at $\nu (C) \in W$. 
In what follows, we use $-K_V \equiv D^{\sharp}$, which is obtained by $K_X \equiv 0$. 

Letting $F$ be a proper transform of a general fiber of the $\bP ^1$-bundle $W \simeq \bF _m \to \bP ^1_{\bC}$ by $\nu$, 
we have $\alpha _{1,s_{r+1}}^{(r+1)}=2$ since $(F \cdot -K_V)=2$ and $(F \cdot D^{\sharp})=(F \cdot \alpha _{1,s_{r+1}}^{(r+1)}D_{1,s_{r+1}}^{(r+1)})=\alpha _{1,s_{r+1}}^{(r+1)}$. 
Moreover, by using Lemma \ref{alpha} (1) and (2) repeatably, we also see that $\alpha _{1,s_{r+1}-1}^{(r+1)}, \dots ,\alpha _{1,1}^{(r+1)},\alpha _0^{(r+1)}$ are positive integers. 

Letting $\Gamma$ be a proper transform of a section, which passes through $\nu (C)$, with self-intersection number $m$ of the $\bP ^1$-bundle $W \simeq \bF _m \to \bP ^1_{\bC}$ by $\nu$, we have $\alpha _{2,t_{r+1}}^{(r+1)}=m+1$ since $(\Gamma \cdot -K_V)=m+1$ and $(\Gamma \cdot D^{\sharp})=(\Gamma \cdot \alpha _{2,t_{r+1}}^{(r+1)}D_{2,t_{r+1}}^{(r+1)})=\alpha _{2,t_{r+1}}^{(r+1)}$. 
Moreover, by using Lemma \ref{alpha} (1) and (2) repeatably, we also see that $\alpha _{2,t_{r+1}-1}^{(r+1)}, \dots ,\alpha _{2,1}^{(r+1)}$ are positive integers. 
\end{proof}
For $i =0,\dots ,r$, let $A_i$ and $B_i$ be the weighted dual graphs defined by $D^{(i+1)}+ \dots + D^{(r)} + \sum _{j=1}^{s_i}D_{1,j}^{(i)}$ and $\sum _{\ell =1}^{t_i}D_{2,\ell}^{(i)}$, respectively. 
Here, $D^{(i+1)}+ \dots + D^{(r)} := 0$ if $i=r$; in other words, $A_r$ is then the weighted dual graph defined by $\sum _{j=1}^{s_r}D_{1,j}^{(r)}$. 
\begin{claim}\label{claim(5-2)}
For $i =0,\dots ,r$, the following assertions hold: 
\begin{enumerate}
\item $d(A_i)d(B_i) \not= 0$. 
\item $d(A_i)$ and $d(B_i)$ are prime to each other. 
In particular, $\left( \frac{1}{d(A_i)}\bZ \right) \cap \left( \frac{1}{d(B_i)}\bZ \right) = \bZ$ as the subset of $\bQ$. 
\item $\alpha _{1,1}^{(0)} \in \frac{1}{d(A_0)}\bZ$ and $\alpha _{1,2}^{(0)} \in \frac{1}{d(B_0)}\bZ$ as subsets of $\bQ$. 
Moreover, for $i=1,\dots ,r$, if $\alpha _0 ^{(i)} \in \bZ$, then, $\alpha _{1,1}^{(i)} \in \frac{1}{d(A_i)}\bZ$ and $\alpha _{1,2}^{(i)} \in \frac{1}{d(B_i)}\bZ$ as subsets of $\bQ$. 
\end{enumerate}
\end{claim}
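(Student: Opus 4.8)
The plan is to prove the three assertions in order, since (2) depends on (1) and (3) depends on both. For assertion (1), the quantities $d(A_i)$ and $d(B_i)$ are determinants of intersection matrices of \emph{connected} negative-definite divisors (they are subgraphs of the exceptional divisor $D$, whose intersection matrix is negative definite by hypothesis). For such a graph the determinant $d(\cdot)$ of $-I(\cdot)$ is strictly positive, hence nonzero; the only subtlety is the degenerate case where $A_i$ or $B_i$ is empty, but the conventions set in \S\ref{4} (namely $d(\emptyset):=1$ or $:=0$ as appropriate) should be invoked, and in fact the configuration in Lemma \ref{lem(3-3)} forces $t_i>0$ for $i=1,\dots,r$ so that $B_i\neq\emptyset$, while $A_i$ contains the chain descending from the branching component. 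So I would first record positivity from negative definiteness, then check the boundary cases against the running conventions.

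For assertion (2), the natural tool is Lemma \ref{lem(2-3-2)}: I would verify that $A_i$ and $B_i$ are precisely two connected components attached to a common $(-1)$-curve $C$ (or to the image of $C$ after the relevant contractions), with $C$ meeting each of them in exactly one point, and that $C+A_i+B_i$ contracts to a single $(-1)$-curve because $C+D$ is the boundary of a compactification of $\bC^2$. This is exactly the hypothesis of Lemma \ref{lem(2-3-2)}, which yields that $d(A_i)$ and $d(B_i)$ are coprime. The ``In particular'' clause is the elementary number-theoretic fact that if $\gcd(p,q)=1$ then $\frac{1}{p}\bZ \cap \frac{1}{q}\bZ = \bZ$, which follows since a common denominator for a rational with denominators dividing both $p$ and $q$ must have denominator dividing $\gcd(p,q)=1$.

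For assertion (3), I would apply the results of \S\ref{4}, specifically the Cramer-rule computations underlying Lemmas \ref{lem(4-1)}--\ref{lem(4-3)}, to express the coefficients $\alpha_{1,1}^{(i)}$ and $\alpha_{1,2}^{(i)}$ as rationals whose denominators are exactly $d(A_i)$ and $d(B_i)$. Concretely, solving the linear system $(D_k\cdot K_V+D^\sharp)=0$ by Cramer's rule gives each $\alpha$ as a ratio of subdeterminants, and the denominator for the coefficient of a component adjacent to the branch point is the determinant of the appropriate component's graph. The base case $i=0$ uses that the branching component sits at the junction of $A_0$ and $B_0$; the inductive step for $i=1,\dots,r$ uses the hypothesis $\alpha_0^{(i)}\in\bZ$ together with Lemma \ref{alpha}(2) to clear the integral contributions and isolate the fractional part into $\frac{1}{d(A_i)}\bZ$ and $\frac{1}{d(B_i)}\bZ$ respectively.

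\textbf{Anticipated main obstacle.} The hardest part will be assertion (3), in particular bookkeeping the Cramer-rule denominators so as to identify them cleanly with $d(A_i)$ and $d(B_i)$ rather than with some larger subdeterminant. The branching component complicates the linear algebra: the coefficient $\alpha_{1,1}^{(i)}$ depends on the entire web of equations, and extracting that its denominator divides $d(A_i)$ requires a careful cofactor expansion along the edge separating $A_i$ from the rest of the graph. I expect that the integrality hypothesis $\alpha_0^{(i)}\in\bZ$ is exactly what decouples the two twigs $A_i$ and $B_i$ at the branch vertex, so that each coefficient's fractional behavior is governed solely by its own twig's determinant; verifying this decoupling rigorously via the block structure of the intersection matrix is where the real work lies.
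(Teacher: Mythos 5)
Your proposal is correct, and for assertions (2) and (3) it follows essentially the same route as the paper: (2) comes from Lemma \ref{lem(2-3-2)} once one records that $C+A_0+B_0$ (resp., for $i\ge 1$, the image of $D_0^{(i)}+A_i+B_i$ after contracting $C+D^{(0)}+\cdots+D^{(i-1)}$) contracts to a single $(-1)$-curve; (3) is exactly the Cramer-rule decoupling you describe, where the subsystem of equations $(D_k\cdot K_V+D^{\sharp})=0$ indexed by the components of $A_i$ (resp.\ $B_i$) has coefficient matrix $-I(A_i)$ (resp.\ $-I(B_i)$) and integer right-hand side once the hypothesis $\alpha_0^{(i)}\in\bZ$ \emph{and} the integrality $\alpha_0^{(r+1)}\in\bZ$ from Claim \ref{claim(5-1)} (the latter you should invoke explicitly, since $A_i$ is also attached to $D_0^{(r+1)}$ at its far end) are used to move the adjacent vertices' contributions across. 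For assertion (1) you take a genuinely different and simpler route: $A_i$ and $B_i$ are dual graphs of subdivisors of the negative definite divisor $D$, so their intersection matrices are negative definite and $d(A_i),d(B_i)>0$ outright (with $d(\emptyset)=1$ in the degenerate cases); the paper instead shows $d(B_i)>1$ by induction on the chain length and rules out $d(A_i)=0$ by contradiction with the determinant identity (\ref{det}). Your version buys brevity and is perfectly valid; the paper's yields the slightly stronger $d(B_i)>1$, which the claim does not need. Two small points to tighten: for $i\ge 1$ the common $(-1)$-curve in Lemma \ref{lem(2-3-2)} is the image of the branching component $D_0^{(i)}$, not ``the image of $C$'' (which is a point), and one should note that the self-intersection numbers of the components of $A_i$ and $B_i$ are unaffected by that contraction so the determinants may be computed downstairs; also, Lemma \ref{alpha} (2) is not the relevant tool in (3) --- the decoupling is pure Cramer's rule on the block subsystem.
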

\begin{proof}
Before the proof, we note the following facts (i) and (ii): 

(i): Assume $i=0$. 
Then $C+D^{(0)}+ \dots +D^{(r)}$ can be contracted to a single $(-1)$-curve passing through the direct image $D_0^{(r+1)}$ such that each $D^{(r+1)}_{1,j}$ and $D^{(r+1)}_{2,\ell}$ have the same self-intersection number of these direct images by the above contraction, respectively. 

(ii): Assume $i>0$. 
Then $C+D^{(0)}+ \dots +D^{(i-1)}$ can be contracted to the divisor corresponding to the following weighted dual graph: 
\begin{align*}
\xygraph{
\circ ([]!{+(0,.3)} {^{D_{1,s_r}^{(r)}}}) -[r] \cdots -[r] \circ ([]!{+(0,.3)} {^{D_0^{(r)}}})
(-[d] \vdots -[d] \circ ([]!{+(.4,0)} {^{D_{2,t_r}^{(r)}}}),
-[r] {\cdots \cdots \cdots} -[r] \circ ([]!{+(0,.3)} {^{D_0^{(i+1)}}})
(-[d] \vdots -[d] \circ ([]!{+(.5,0)} {^{D_{2,t_{i+1}}^{(i+1)}}}),
-[r] \cdots -[r] \bullet ([]!{+(0,.3)} {^{D_0^{(i)}}})
- []!{+(0,-.6)} \circ ([]!{+(.4,0)} {^{D_{2,1}^{(i)}}}) - []!{+(0,-.7)} \vdots - []!{+(0,-.7)} \circ ([]!{+(.4,0)} {^{D_{2,t_i}^{(i)}}})
))
}
\end{align*}
Notice that each $D^{(i)}_{1,j}$ and $D^{(i)}_{2,\ell}$ have the same self-intersection number of these direct images by the above contraction, respectively. 
Moreover, the divisor corresponding to the above weighted dual graph can be contracted to a single $(-1)$-curve passing through the direct image $D_0^{(r+1)}$. 

In (1), we note that $\sum _{\ell =1}^{t_i}D_{2,\ell}^{(i)}$ is a rational chain with self-intersection number $\le -2$. 
Hence, we obtain $d(B_i) >1$ by the induction on $t_i$. 
Now, suppose on the contrary $d(A_i)=0$. 
Then we obtain $-a \cdot d(B_i)=1$ for some $a \in \bZ$ by facts (i) and (ii) at the beginning (see also (\ref{det})). 
In particular, we have $d(B_i)= \pm 1$ because $a$ is an integer; however, it contradicts $d(B_i)>1$. 
Thus, $d(A_i) \not= 0$. 

In (2), it follows from Lemma \ref{lem(2-3-2)} combined with facts (i) and (ii) at the beginning. 

In (3), we shall first show $\alpha _{2,1}^{(0)} \in \frac{1}{d(B_0)}\bZ$. 
By virtue of the equations $\{ (D_{2,\ell}^{(0)} \cdot D^{\sharp})=(D_{2,\ell}^{(0)} \cdot -K_V)\} _{1 \le \ell \le t_0}$, we obtain a linear simultaneous equation, whose left-hand-side corresponds to the intersection matrix of $B_0$. 
In other words, we can write: 
\begin{align}\label{B0}
\left[ \begin{array}{ccccc}
n_{2,1}^{(0)} & -1 & & & \\
-1 & n_{2,2}^{(0)} & -1 & & \\
 & -1 & \ddots & \ddots & \\
 & & \ddots & \ddots & -1 \\
 & &  & -1 & n_{2,t_0}^{(0)} 
\end{array} \right]
\left[ \begin{array}{c}
\alpha _{2,1}^{(0)} \\ \alpha _{2,2}^{(0)} \\ \vdots \\ \vdots \\ \alpha _{2,t_0}^{(0)}
\end{array} \right]
=
\left[ \begin{array}{c}
-(D_{2,1}^{(0)} \cdot K_V) \\ -(D_{2,2}^{(0)} \cdot K_V) \\ \vdots \\ \vdots \\ -(D_{2,t_0}^{(0)} \cdot K_V)
\end{array} \right]
\end{align}
Note $d(B_0) \not= 0$ by (1). 
Hence, we know that $\alpha _{2,1}^{(0)} \in \frac{1}{d(B_0)}\bZ$ by the Cramer formula. 

Next, we shall show $\alpha _{1,1}^{(0)} \in \frac{1}{d(A_0)}\bZ$. 
As in the proof of $\alpha _{2,1}^{(0)} \in \frac{1}{d(B_0)}\bZ$, we consider the equations $\{ (D_{1,j_i}^{(i)} \cdot D^{\sharp})=(D_{1,j_i}^{(i)} \cdot -K_V)\} _{0 \le i \le r,\, 1 \le j_i \le s_i}$ and $\{ (D_{2,\ell _i}^{(i)} \cdot D^{\sharp})=(D_{2,\ell _i}^{(i)} \cdot -K_V)\} _{1 \le i \le r,\, 1 \le \ell _i \le t_i}$. 
Then we obtain a linear simultaneous equation similar to (\ref{B0}), whose left-hand-side corresponds to the intersection matrix of $A_0$. 
Here, we note that any component of the right-hand-side of this linear simultaneous equation is an integer because of: 
\begin{align*}
&(D_{1,s_r}^{(r)} \cdot D_{1,s_r-1}^{(r)})\alpha _{1,s_r-1}^{(r)} + (D_{1,s_r}^{(r)} \cdot D_{1,s_r}^{(r)})\alpha _{1,s_r}^{(r)} + (D_{1,s_r}^{(r)} \cdot D_0^{(r+1)})\alpha _0^{(r+1)} = (D_{1,s_r}^{(r)} \cdot -K_V) \\
\iff &(D_{1,s_r}^{(r)} \cdot D_{1,s_r-1}^{(r)})\alpha _{1,s_r-1}^{(r)} + (D_{1,s_r}^{(r)} \cdot D_{1,s_r}^{(r)})\alpha _{1,s_r}^{(r)} = 
(D_{1,s_r}^{(r)} \cdot -K_V) - (D_{1,s_r}^{(r)} \cdot D_0^{(r+1)})\alpha _0^{(r+1)}
\end{align*}
and $\alpha _0^{(r+1)} \in \bZ$ by Claim \ref{claim(5-1)}. 
Hence, we know that $\alpha _{1,1}^{(0)} \in \frac{1}{d(A_0)}\bZ$ by the Cramer formula. 

The remaining follows from a similar argument as above. 
\end{proof}
By using Claim \ref{claim(5-2)}, we shall prove Claims \ref{claim(5-3)} and \ref{claim(5-4)}: 
\begin{claim}\label{claim(5-3)}
We have $\alpha _{1,1}^{(0)},\dots ,\alpha _{1,s_0}^{(0)},\alpha _0^{(1)} \in \bZ _{>0}$. 
Moreover, if $t_0>0$, then we have $\alpha _{2,\ell}^{(0)}=0$ for every $\ell = 1,\dots ,t_0$. 
\end{claim}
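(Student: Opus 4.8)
The plan is to start from the numerical identity forced by the hypothesis of \S\S\ref{5-1}, namely $K_V + D^{\sharp} \equiv 0$ (equivalently $-K_V \equiv D^{\sharp}$), and read off information at the $(-1)$-curve $C$. Since $C$ is a $(-1)$-curve, adjunction gives $(C \cdot K_V) = -1$, whence $(C \cdot D^{\sharp}) = 1$ (this is exactly the characterization in Lemma \ref{numerical} (2)). As $C$ meets $\Supp(D)$ only in $D_{1,1}^{(0)}$ and, when $t_0>0$, in $D_{2,1}^{(0)}$, expanding $(C\cdot D^{\sharp})$ yields the single scalar relation $\alpha_{1,1}^{(0)} + \alpha_{2,1}^{(0)} = 1$, with $\alpha_{2,1}^{(0)}=0$ understood when $t_0=0$. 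This relation is the backbone of the whole argument.

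The next step is to pin down $\alpha_{1,1}^{(0)}$ precisely. For \emph{integrality}, I would combine the relation above with Claim \ref{claim(5-2)}: by part (3), $\alpha_{1,1}^{(0)} \in \frac{1}{d(A_0)}\bZ$ and $\alpha_{2,1}^{(0)} \in \frac{1}{d(B_0)}\bZ$, and since $\alpha_{2,1}^{(0)} = 1 - \alpha_{1,1}^{(0)}$ also lies in $\frac{1}{d(A_0)}\bZ$, the coprimality statement in part (2) gives $\alpha_{2,1}^{(0)} \in \frac{1}{d(A_0)}\bZ \cap \frac{1}{d(B_0)}\bZ = \bZ$, hence $\alpha_{1,1}^{(0)} \in \bZ$ as well. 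For \emph{positivity}, I would invoke Claim \ref{claim(5-1)}, which gives $\alpha_0^{(r+1)}>0$, and then propagate positivity down the connected spine $D_0^{(r+1)}, \dots, D_0^{(1)}, D_{1,s_0}^{(0)}, \dots, D_{1,1}^{(0)}$ by repeated application of Lemma \ref{alpha} (1); this simultaneously yields $\alpha_{1,1}^{(0)}, \dots, \alpha_{1,s_0}^{(0)}, \alpha_0^{(1)} > 0$. Being a positive integer that is $\le 1$ by the backbone relation, $\alpha_{1,1}^{(0)}$ must equal $1$, and therefore $\alpha_{2,1}^{(0)} = 0$ whenever $t_0>0$.

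It then remains to collapse the twig and to upgrade the chain coefficients to integers. Within $D$ each $D_{2,\ell}^{(0)}$ meets only $D_{2,\ell\pm 1}^{(0)}$ (its other neighbour $C$ is not a component of $D$), so the defining equation $(D_{2,\ell}^{(0)} \cdot K_V + D^{\sharp})=0$ combined with adjunction reads $\alpha_{2,\ell+1}^{(0)} = n_{2,\ell}^{(0)}\alpha_{2,\ell}^{(0)} - \alpha_{2,\ell-1}^{(0)} - (n_{2,\ell}^{(0)}-2)$. Feeding in $\alpha_{2,1}^{(0)}=0$ gives $\alpha_{2,2}^{(0)} = 2 - n_{2,1}^{(0)} \le 0$, so effectivity of $D^{\sharp}$ forces $\alpha_{2,2}^{(0)}=0$; iterating down the twig yields $\alpha_{2,\ell}^{(0)}=0$ for every $\ell$. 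Finally, integrality of the remaining chain coefficients propagates from $\alpha_{1,1}^{(0)} \in \bZ$ by repeated use of Lemma \ref{alpha} (2): at each $D_{1,j}^{(0)}$ the only $D$-neighbours are the adjacent chain members, so integrality of the centre together with that of one neighbour gives it for the next, ending with $\alpha_0^{(1)} \in \bZ$. With the positivity already secured, this delivers $\alpha_{1,1}^{(0)}, \dots, \alpha_{1,s_0}^{(0)}, \alpha_0^{(1)} \in \bZ_{>0}$.

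The main obstacle is the exact determination $\alpha_{1,1}^{(0)}=1$: positivity (Lemma \ref{alpha} (1)) and the relation $(C\cdot D^{\sharp})=1$ only confine $\alpha_{1,1}^{(0)}$ to the interval $(0,1]$, and one genuinely needs the arithmetic input of Claim \ref{claim(5-2)} — the coprimality of $d(A_0)$ and $d(B_0)$ — to upgrade it to an integer and thereby to $1$. Once that is in place, the twig vanishing and the chain integrality are routine inductions; the only care needed is for the degenerate cases $s_0=0$ or $t_0=0$, which are read off directly from the backbone relation (and for $r=0$ the integrality of $\alpha_0^{(1)}=\alpha_0^{(r+1)}$ is already furnished by Claim \ref{claim(5-1)}).
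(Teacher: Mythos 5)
Your proof is correct and follows essentially the same route as the paper: the backbone relation $\alpha_{1,1}^{(0)}+\alpha_{2,1}^{(0)}=(C\cdot D^{\sharp})=1$ coming from $K_V+D^{\sharp}\equiv 0$, integrality via the coprimality of $d(A_0)$ and $d(B_0)$ from Claim \ref{claim(5-2)}, and propagation along the chain and the twig via Lemma \ref{alpha}. The one point where you diverge is in ruling out the alternative $(\alpha_{1,1}^{(0)},\alpha_{2,1}^{(0)})=(0,1)$: the paper observes that $\sum_{\ell}D_{2,\ell}^{(0)}$ is a connected component of $D$ which contracts to a cyclic quotient (hence log terminal) singular point, so its $D^{\sharp}$-coefficients are $<1$; you instead propagate positivity from $\alpha_0^{(r+1)}>0$ (Claim \ref{claim(5-1)}) down the spine to $D_{1,1}^{(0)}$ by repeated use of Lemma \ref{alpha} (1). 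Both are valid, and your variant has the small advantage of not invoking the singularity type of the twig, at the cost of making the positivity propagation (which the paper also needs, but only afterwards) do double duty.
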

\begin{proof}
At first, we prove $\alpha _{2,1}^{(0)}=0$ (if $t_0>0$) and $\alpha _{1,1}^{(0)}=1$. 
Notice $1=(C \cdot -K_V) = (C \cdot D^{\sharp})$ because $C$ is a $(-1)$-curve and $K_V+D^{\sharp} \equiv 0$. 
Hence, if $t_0=0$, then we obtain $1=(C \cdot D^{\sharp}) = \alpha _{1,1}^{(0)}$. 
From now on, we assume $t_0>0$. 
Then $\alpha _{1,1}^{(0)}+\alpha _{2,1}^{(0)}=1$. 
Meanwhile, since $\alpha _{1,1}^{(0)} \in \frac{1}{d(A_0)}\bZ$ and $\alpha _{2,1}^{(0)} \in \frac{1}{d(B_0)}\bZ$ as subsets of $\bQ$ by Claim \ref{claim(5-2)} (3), 
we have $\alpha _{2,1}^{(0)} \in \left( \frac{1}{d(A_0)}\bZ \right) \cap \left( \frac{1}{d(B_0)}\bZ \right) = \bZ$ by using Claim \ref{claim(5-2)} (2). 
Moreover, we know $\alpha _{2,1}^{(0)} \in \{ 0,1\}$ because $\alpha _{1,1}^{(0)}$ and $\alpha _{2,1}^{(0)}$ are non-negative. 
Here, we note $\alpha _{2,1}^{(0)}=0$. 
Indeed, $\sum _{\ell =1}^{t_0}D_{2,\ell}^{(0)}$ is a connected component of $D$ and can be contracted to a cyclic quotient singular point. 
Hence, we obtain $\alpha _{2,1}^{(0)}=0$. 
Namely, $\alpha _{1,1}^{(0)}=1$. 

In what follows, we shall show $\alpha _{2,2}^{(0)} = \dots = \alpha _{2,t_0}^{(0)} = 0$ (if $t_0>0$) and $\alpha _{1,2}^{(0)},\dots ,\alpha _{1,{s_0}}^{(0)},\alpha _0^{(1)} \in \bZ _{>0}$. 
However, in this case, we obtain this assertion by using Lemma \ref{alpha} repeatedly. 
\end{proof}
\begin{claim}\label{claim(5-4)}
For $i =1,\dots ,r$, we have $\alpha _{1,1}^{(i)},\dots ,\alpha _{1,s_i}^{(i)},\alpha _{2,1}^{(i)},\dots ,\alpha _{2,t_i}^{(i)},\alpha _0^{(i+1)} \in \bZ _{>0}$. 
\end{claim}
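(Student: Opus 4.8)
The plan is to prove Claim \ref{claim(5-4)} by induction on $i$, anchoring each step at the branching component $D_0^{(i)}$ and reducing everything to the two propagation rules of Lemma \ref{alpha} together with the coprimality of $d(A_i)$ and $d(B_i)$ recorded in Claim \ref{claim(5-2)}. Before starting I would dispose of positivity once and for all. Every coefficient of $D^{\sharp}$ is $\ge 0$ since $D^{\sharp}$ is effective, and $\alpha_0^{(1)}>0$ by Claim \ref{claim(5-3)}; as the connected component of $D$ carrying all the branching vertices is connected, repeated application of Lemma \ref{alpha} (1) along its edges shows that every coefficient attached to that component is strictly positive. In particular all the $\alpha_{1,j}^{(i)}$, $\alpha_{2,\ell}^{(i)}$ and $\alpha_0^{(i+1)}$ appearing in the statement are $>0$, so it remains only to prove that they are integers.

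For integrality I would induct on $i=1,\dots,r$ with hypothesis $H(i)$: \emph{$\alpha_0^{(i)}\in\bZ$ and $\alpha_{1,s_{i-1}}^{(i-1)}\in\bZ$}, i.e.\ the coefficient of $D_0^{(i)}$ and that of its spine-neighbour in the previous block are integral. The base case $i=1$ is exactly the content of Claim \ref{claim(5-3)}, which gives $\alpha_0^{(1)},\alpha_{1,s_0}^{(0)}\in\bZ_{>0}$. For the inductive step I would first read off the defining equation $(D_0^{(i)}\cdot K_V+D^{\sharp})=0$ at the branching component. Since $C+D$ is SNC (Lemma \ref{lem(3-1)} (2)) and $D_0^{(i)}$ meets exactly the three components $D_{1,s_{i-1}}^{(i-1)}$, $D_{1,1}^{(i)}$, $D_{2,1}^{(i)}$, each transversally, writing $n_0^{(i)}:=-(D_0^{(i)})^2$ this equation becomes
\[
\alpha_{1,s_{i-1}}^{(i-1)}+\alpha_{1,1}^{(i)}+\alpha_{2,1}^{(i)}=n_0^{(i)}\bigl(\alpha_0^{(i)}-1\bigr)+2 .
\]
By $H(i)$ the left-most summand and the whole right-hand side lie in $\bZ$, whence $\alpha_{1,1}^{(i)}+\alpha_{2,1}^{(i)}\in\bZ$.

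Next I would split this sum into its two summands. Because $\alpha_0^{(i)}\in\bZ$, Claim \ref{claim(5-2)} (3) gives $\alpha_{1,1}^{(i)}\in\frac{1}{d(A_i)}\bZ$ and $\alpha_{2,1}^{(i)}\in\frac{1}{d(B_i)}\bZ$, while Claim \ref{claim(5-2)} (2) guarantees $\gcd(d(A_i),d(B_i))=1$. Writing $\alpha_{1,1}^{(i)}=a/d(A_i)$ and $\alpha_{2,1}^{(i)}=b/d(B_i)$, integrality of the sum says $d(A_i)d(B_i)\mid a\,d(B_i)+b\,d(A_i)$; reducing modulo $d(A_i)$ and using coprimality forces $d(A_i)\mid a$, and symmetrically $d(B_i)\mid b$, so both $\alpha_{1,1}^{(i)}$ and $\alpha_{2,1}^{(i)}$ are integers. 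With these two integral seeds I would then run Lemma \ref{alpha} (2) (the valence-two case, $r=2$ there, taking each interior component of the chain as the central vertex $D_0$) along the chain $D_{1,1}^{(i)},\dots,D_{1,s_i}^{(i)}$ and along the twig $D_{2,1}^{(i)},\dots,D_{2,t_i}^{(i)}$, obtaining integrality of every $\alpha_{1,j}^{(i)}$, every $\alpha_{2,\ell}^{(i)}$, and finally of $\alpha_0^{(i+1)}$ (the next spine-neighbour of $D_{1,s_i}^{(i)}$). This yields $H(i+1)$ and closes the induction; combined with the positivity established at the outset, all the listed coefficients are positive integers.

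The one delicate point, and the step I expect to carry the weight of the proof, is the integral splitting of $\alpha_{1,1}^{(i)}+\alpha_{2,1}^{(i)}$: it genuinely requires both the denominator information from the Cramer-rule computation (Claim \ref{claim(5-2)} (3)) and the coprimality of the determinants (Claim \ref{claim(5-2)} (2)), neither of which alone suffices. A secondary nuisance is the bookkeeping of degenerate branches — when $s_i=0$ the symbol $\alpha_{1,1}^{(i)}$ must be read as $\alpha_0^{(i+1)}$, and when $s_{i-1}=0$ the down-neighbour of $D_0^{(i)}$ is $D_0^{(i-1)}$ — but in each such case the convention $D_{1,1}^{(i)}:=D_0^{(i+1)}$ keeps $D_0^{(i)}$ of valence three, so the displayed equation and the argument above go through verbatim.
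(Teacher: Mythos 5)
Your proof is correct and follows essentially the same route as the paper: integrality of $\alpha_{1,1}^{(i)}+\alpha_{2,1}^{(i)}$ is extracted from the defining equation at the branching component $D_0^{(i)}$, the sum is split into two integers via Claim \ref{claim(5-2)} (2) and (3), and the result is propagated along the chains by Lemma \ref{alpha}, with positivity coming from Lemma \ref{alpha} (1). The paper only writes out the case $i=1$ and declares the rest ``similar'', whereas you make the induction hypothesis and the degenerate-branch conventions explicit; this is a presentational difference, not a different argument.
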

\begin{proof}
At first, we shall treat the case $i=1$. 
By virtue of $(D_0^{(1)} \cdot -K_V) = (D_0^{(1)} \cdot D^{\sharp})$ combined with Claim \ref{claim(5-3)}, we have $\alpha _0 ^{(1)} \in \bZ _{>0}$ and $\alpha _{1,1}^{(1)}+\alpha _{2,1}^{(1)} \in \bZ$. 
Meanwhile, since $\alpha _{1,1}^{(1)} \in \frac{1}{d(A_1)}\bZ$ and $\alpha _{2,1}^{(1)} \in \frac{1}{d(B_1)}\bZ$ as subsets of $\bQ$ by Claim \ref{claim(5-2)} (3), we have $\alpha _{1,1}^{(1)}, \alpha _{2,1}^{(1)} \in \left( \frac{1}{d(A_1)}\bZ \right) \cap \left( \frac{1}{d(B_1)}\bZ \right) = \bZ$ by Claim \ref{claim(5-2)} (2). 
Here, we note that $\alpha _{1,1}^{(1)}$ and $\alpha _{2,1}^{(1)}$ are positive integers by Lemma \ref{alpha} (1). 
Hence, we know that $\alpha _{1,j}^{(1)}$, $\alpha _{2,\ell}^{(1)}$ and $\alpha _0^{(2)}$ are positive integers by using Lemma \ref{alpha} repeatedly. 

The remaining follows from the similar argument as above. 
\end{proof}
Claim \ref{claim(0)} follows from Claims \ref{claim(5-1)}, \ref{claim(5-3)} and \ref{claim(5-4)}. 
The proof of Theorem \ref{main(2)} (1) is thus completed. 

At the end of this subsection, we provide some examples of minimal compactifications of $\bC ^2$ with the numerically trivial canonical divisors: 
\begin{eg}\label{eg(5-1)(1)}
We can construct a compactification $(V,C+D)$ of $\bC ^2$ such that $V$ is a smooth projective surface and the weighted dual graph of $C+D$ is as follows: 
\begin{align*}
\xygraph{
\circ ([]!{+(0,-.3)} {^{-m}}) ([]!{+(0,.3)} {^{D_{1,2}^{(1)}}}) -[r] \circ ([]!{+(0,.3)} {^{D_{1,1}^{(1)}}}) -[r] \circ ([]!{+(0,.3)} {^{D_0^{(1)}}}) (-[d] \circ ([]!{+(.4,0)} {^{D_{2,1}^{(1)}}}),-[r] \circ ([]!{+(0,.3)} {^{D_{1,2m+1}^{(0)}}}) -[r] {\cdots} ([]!{+(0,-.4)} {\underbrace{\quad \qquad \qquad \qquad}_{(2m+1)\text{-vertices}}}) -[r] \circ ([]!{+(0,.3)} {^{D_{1,1}^{(0)}}}) -[r] \bullet ([]!{+(0,.25)} {^C})))} \quad (m \ge 3)
\end{align*}
Indeed, there exists a birational morphism $\nu :V \to \bF _m$ such that $(\bF _m,\nu _{\ast}(C+D))$ is a minimal normal compactification of $\bC ^2$ as in Lemma \ref{lem(3-2)}. 
By a straightforward calculation, we know that the determinant of the intersection matrix of $D$ is equal to $2(m-2)$, so that the intersection matrix of $D$ is negative definite. 
Hence, by Lemma \ref{cont}, there exists a contraction $\pi :V \to X$ of $D$ such that $(X,\pi (C))$ is a minimal compactification of $\bC ^2$. 
Let $D^{\sharp}$ be the $\bQ$-divisor on $V$ such that $K_V + D^{\sharp} \equiv \pi ^{\ast}(K_X)$. 
Then coefficients of $D^{\sharp}$ are determined as follows: 
\begin{align*}
\alpha _{1,i}^{(0)} = i\ (i=1,\dots ,2m+1),\ \alpha _0^{(1)} = 2(m+1),\ \alpha _{1,1}^{(1)} = m+2,\ \alpha _{1,2}^{(1)} = 2,\ \alpha _{2,1}^{(1)} = m+1. 
\end{align*}
Hence, by Lemma \ref{numerical}, $K_X$ is numerically trivial. 
In particular, $\pi (D) \in X$ is an irrational singular point by Theorem \ref{main(2)} (1). 
\end{eg}
\begin{eg}\label{eg(5-1)(2)}
We can construct a compactification $(V,C+D)$ of $\bC ^2$ such that $V$ is a smooth projective surface and the weighted dual graph of $C+D$ is as follows: 
\begin{align*}
\xygraph{
\circ ([]!{+(0,.3)} {^{D_{1,2}^{(1)}}}) -[r] \circ ([]!{+(0,.3)} {^{D_{1,1}^{(1)}}}) -[r] \circ ([]!{+(0,.3)} {^{D_0^{(1)}}}) (-[d] \circ ([]!{+(.3,0)} {^{D_{2,1}^{(1)}}}),-[r] \circ ([]!{+(0,.3)} {^{D_{1,5}^{(0)}}}) -[r] {\cdots} ([]!{+(0,-.4)} {\underbrace{\quad \qquad \qquad \qquad}_{4\text{-vertices}}}) -[r] \circ ([]!{+(0,.3)} {^{D_{1,2}^{(0)}}}) -[r] \circ ([]!{+(0,-.3)} {^{-m}}) ([]!{+(0,.3)} {^{D_{1,1}^{(0)}}}) -[r] \bullet ([]!{+(0,.25)} {^C}) -[r] \circ ([]!{+(0,.3)} {^{D_{2,1}^{(0)}}}) -[r] {\cdots} ([]!{+(0,-.4)} {\underbrace{\quad \qquad \qquad \qquad}_{(m-2)\text{-vertices}}}) -[r] \circ ([]!{+(0,.3)} {^{D_{2,m-2}^{(0)}}}))} \quad (m \ge 3)
\end{align*}
Indeed, there exists a birational morphism $\nu :V \to \bF _2$ such that $(\bF _2,\nu _{\ast}(C+D))$ is a minimal normal compactification of $\bC ^2$ as in Lemma \ref{lem(3-2)}. 
By a straightforward calculation, we know that the determinant of the intersection matrix of $D$ is equal to $(m-2)(m-1)$, so that the intersection matrix of $D$ is negative definite. 
Hence, by Lemma \ref{cont}, there exists a contraction $\pi :V \to X$ of $D$ such that $(X,\pi (C))$ is a minimal compactification of $\bC ^2$. 
Let $D^{\sharp}$ be the $\bQ$-divisor on $V$ such that $K_V + D^{\sharp} \equiv \pi ^{\ast}(K_X)$. 
Then coefficients of $D^{\sharp}$ are determined as follows: 
\begin{align*}
\alpha _{1,i}^{(0)} = i\ (i=1,2,3,4,5),\ &\alpha _{2,j}^{(0)} = 0\ (j=1,\dots ,m-2),\\
\alpha _0^{(1)} = 6,\ \alpha _{1,1}^{(1)} = 4,\ &\alpha _{1,2}^{(1)} = 2,\ \ \alpha _{2,1}^{(1)} = 3. 
\end{align*}
Hence, by Lemma \ref{numerical}, $K_X$ is numerically trivial. 
In particular, $\Sing (X)$ consists of a cyclic quotient singular point and an irrational singular point by Theorem \ref{main(2)} (1). 
\end{eg}
\subsection{Proof of Theorem \ref{main(2)} (2)}\label{5-2}
Let the notation be the same at the beginning \S \ref{5}, and assume further that the canonical divisor $K_X$ is numerically ample. 
By Lemmas \ref{numerical}, \ref{lem(4-1)}, \ref{lem(4-2)} and \ref{lem(4-3)}, we then obtain the following lemma: 
\begin{lem}\label{lem(5-2)}
Let the notation be the same at the beginning \S \ref{5}, and assume further that $K_X$ is numerically ample. 
Then there exists a birational morphism $\widetilde{f}:V \to \widetilde{V}$ and there exists a unique $(-1)$-curve $\widetilde{C}$ on $\widetilde{f}_{\ast}(D)$ such that the following properties on the divisor $\widetilde{D} := \widetilde{f}_{\ast}(D)-\widetilde{C}$ hold: 
\begin{itemize}
\item $V \backslash \Supp (C+D) \simeq \widetilde{V} \backslash \Supp (\widetilde{C}+\widetilde{D})$. 
\item The intersection matrix of $\widetilde{D}$ is negative definite. 
\item Let $\widetilde{\pi}:\widetilde{V} \to \widetilde{X}$ be the contraction of $\widetilde{D}$ and let us put $\widetilde{\Gamma} := \widetilde{\pi}_{\ast}(\widetilde{C})$. 
Then $(\widetilde{X},\widetilde{\Gamma})$ be a minimal compactification of $\bC^2$ such that $K_{\widetilde{X}}$ is numerically trivial. 
\end{itemize}
\end{lem}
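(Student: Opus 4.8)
The plan is to realise $\widetilde{f}$ as a composition of the elementary birational transformations analysed in Section \ref{4}, stopping precisely when the canonical divisor first becomes numerically trivial. First I would record the dictionary furnished by Lemma \ref{numerical}: since $\rho(V)>2$, the curve $C$ is a $(-1)$-curve by Lemma \ref{lem(3-1)} (3), so the hypothesis that $K_X$ is numerically ample is equivalent to $(C \cdot D^{\sharp})>1$, while the desired conclusion $K_{\widetilde{X}} \equiv 0$ is equivalent to $(\widetilde{C} \cdot \widetilde{D}^{\sharp})=1$. Thus it suffices to produce, by a chain of boundary contractions issuing from $(V,C+D)$, a compactification whose $(-1)$-curve $\widetilde{C}$ satisfies $(\widetilde{C} \cdot \widetilde{D}^{\sharp})=1$. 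I would also note at the outset that $D$ admits a branching component: if every connected component of $D$ were a chain, every singular point of $X$ would be a cyclic quotient singularity, hence log terminal, and Theorem \ref{KT} would force $K_X$ to fail to be nef, contradicting numerical ampleness.

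Next I would set up the iteration. Given any compactification $(V',C'+D')$ of $\bC^2$ in which $C'$ is a $(-1)$-curve and $D'$ is negative definite with a branching component, the graph $C'+D'$ is a tree and, by Lemma \ref{lem(3-1)} (1), $D'$ has at most two connected components; the three possibilities---$D'$ connected (Case (1)); $D'$ disconnected with connected image under the contraction of $C'$ (Case (2)); $D'$ and its image both disconnected (Case (3))---are therefore exhaustive and coincide with the cases of Section \ref{4}. Contracting $C'$ (and, in Case (2), also the resulting $(-1)$-curve) yields a compactification of $\bC^2$ of strictly smaller Picard rank whose complement of the boundary is unchanged, since only boundary curves are contracted. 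The new negative divisor is again negative definite by Lemmas \ref{lem(4-1)} (1), \ref{lem(4-2)} (1) and \ref{lem(4-3)} (1); the branching component survives because in each case it lies inside the subgraph $A$ that is carried over unchanged; and by Lemmas \ref{lem(4-1)} (2), \ref{lem(4-2)} (2) and \ref{lem(4-3)} (2) the condition $(C' \cdot {D'}^{\sharp})>1$ is inherited by the transform as an inequality $\ge 1$.

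I would then conclude by a minimality (descent) argument. Let $\mathcal{S}$ denote the set of compactifications of $\bC^2$ obtained from $(V,C+D)$ by finitely many such transformations and satisfying $(C' \cdot {D'}^{\sharp}) \ge 1$; it is nonempty, as it contains $(V,C+D)$ itself. Choose $(\widetilde{V},\widetilde{C}+\widetilde{D}) \in \mathcal{S}$ of minimal Picard rank and let $\widetilde{f}:V \to \widetilde{V}$ be the associated composite birational morphism, so that $\widetilde{f}_{\ast}(D)=\widetilde{C}+\widetilde{D}$. Since $\widetilde{D}$ carries a branching component we have $\rho(\widetilde{V})>2$, whence $\widetilde{C}$ is the unique $(-1)$-curve on $\widetilde{f}_{\ast}(D)$ by Lemma \ref{lem(3-1)} (3). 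If $(\widetilde{C} \cdot \widetilde{D}^{\sharp})$ were $>1$, the construction of the preceding paragraph would yield an element of $\mathcal{S}$ of strictly smaller Picard rank, contradicting minimality; hence $(\widetilde{C} \cdot \widetilde{D}^{\sharp})=1$. Contracting the negative definite divisor $\widetilde{D}$ via $\widetilde{\pi}:\widetilde{V}\to\widetilde{X}$ (Lemma \ref{cont}) and setting $\widetilde{\Gamma}:=\widetilde{\pi}_{\ast}(\widetilde{C})$ then gives a minimal compactification of $\bC^2$ with $K_{\widetilde{X}} \equiv 0$ by Lemma \ref{numerical} (2), and the first two bullet points hold by construction.

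The step I expect to be most delicate is verifying that the hypotheses of the Section \ref{4} lemmas genuinely persist along the entire chain: one must check that a branching component is never destroyed (so that $\rho>2$ and the transform remains a $(-1)$-curve, keeping a transformation applicable) and that the correct one of the three cases applies at every stage. These are precisely the structural facts encoded in the dual graphs of Lemma \ref{lem(3-3)} and of Section \ref{4}, and propagating them through each contraction is the bookkeeping core of the proof; once they are secured, the monotonicity inequalities together with the Picard-rank descent conclude the argument.
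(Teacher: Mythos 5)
Your proposal follows exactly the route the paper intends: Lemma \ref{lem(5-2)} is stated in the paper with no written proof beyond the citation of Lemmas \ref{numerical}, \ref{lem(4-1)}, \ref{lem(4-2)} and \ref{lem(4-3)}, and your iteration of the Case (1)--(3) transformations together with a descent on the Picard rank (equivalently, on the number of boundary components) is precisely how those lemmas are meant to be assembled. The argument is correct, with one justification that needs repair: the claim that ``the branching component survives because it lies inside the subgraph $A$ that is carried over unchanged'' is not literally true, since the component $D_1$ removed at a given step may be adjacent to a branching component of $D$, which then has valency $2$ in the new divisor $D'$ (it remains a branching point of $C'+D'$ but not of $D'$ alone). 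The correct way to secure persistence is the same Theorem \ref{KT} argument you already use for the base case, applied at every stage: the inherited inequality $(C'\cdot {D'}^{\sharp})\ge 1$ shows $X'$ is not a numerical del Pezzo surface, so by Theorem \ref{KT} it has a singular point worse than log canonical, whose exceptional locus cannot be a chain; hence $D'$ has a branching component, $\rho(V')>2$, and the next transformation is applicable whenever the value is still $>1$. With that substitution your descent closes exactly as written.
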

By using Lemma \ref{lem(5-2)}, we can show Theorem \ref{main(2)} (2) as follows: 
\begin{proof}[Proof of Theorem \ref{main(2)} (2)]
Let $\widetilde{D}$ and $(\widetilde{X},\widetilde{\Gamma})$ be the same as in Lemma \ref{lem(5-2)}. 
Let $\widetilde{D}=\sum _j\widetilde{D}_j$ be the decomposition of $\widetilde{D}$ into irreducible components. 
Since the intersection matrix of $\widetilde{D}$ is negative definite, there exists uniquely an effective $\bQ$-divisor $\widetilde{D}^{\sharp}=\sum _j\widetilde{\alpha}_j\widetilde{D}_j$ on $\widetilde{V}$ such that $(\widetilde{D}_j \cdot K_{\widetilde{V}}+\widetilde{D}^{\sharp})=0$ for every irreducible component $\widetilde{D}_j$ of $\widetilde{D}$. 
Moreover, letting $\widetilde{Z}$ be the connected component of $\widetilde{D}^{\sharp}$ corresponding to a singular point worse than log canonical singularities, any coefficient of $\widetilde{Z}$ is a positive integer by Claim \ref{claim(0)}. 
Now, letting $E$ be the connected component of $D$ corresponding to a singular point worse than log canonical singularities, we shall put:
\begin{align}\label{Z}
Z := \widetilde{f}^{-1}_{\ast}(\widetilde{Z}) + (E - \widetilde{f}^{-1}_{\ast}(\widetilde{Z})_{\rm red.}). 
\end{align}
Then we have: 
\begin{align*}
(Z \cdot K_V+Z) &= (\widetilde{f}^{-1}_{\ast}(\widetilde{Z}) \cdot K_V + \widetilde{f}^{-1}_{\ast}(\widetilde{Z})) + 2 (\widetilde{f}^{-1}_{\ast}(\widetilde{Z}) \cdot E - \widetilde{f}^{-1}_{\ast}(\widetilde{Z})_{\rm red.})\\
&\qquad+ (E - \widetilde{f}^{-1}_{\ast}(\widetilde{Z})_{\rm red.} \cdot K_V + E - \widetilde{f}^{-1}_{\ast}(\widetilde{Z})_{\rm red.}). 
\end{align*}
Here, by construction of $\widetilde{f}$ and $\widetilde{Z}$ we notice: 
\begin{align*}
(\widetilde{f}^{-1}_{\ast}(\widetilde{Z}) \cdot K_V + \widetilde{f}^{-1}_{\ast}(\widetilde{Z})) = (\widetilde{Z} \cdot K_{\widetilde{V}} + \widetilde{Z}) = 0, \\
(\widetilde{f}^{-1}_{\ast}(\widetilde{Z}) \cdot E - \widetilde{f}^{-1}_{\ast}(\widetilde{Z})_{\rm red.}) = (\widetilde{Z} \cdot \widetilde{C})=1.
\end{align*}
Moreover, since $E - \widetilde{f}^{-1}_{\ast}(\widetilde{Z})_{\rm red.}$ is reduced and its dual graph is a tree, we know: 
\begin{align*}
&(E - \widetilde{f}^{-1}_{\ast}(\widetilde{Z})_{\rm red.} \cdot K_V + E - \widetilde{f}^{-1}_{\ast}(\widetilde{Z})_{\rm red.}) \\
&\quad= 
\sharp \{ D_i\,|\, D_i\text{ is a branching component of }E - \widetilde{f}^{-1}_{\ast}(\widetilde{Z})_{\rm red.}\} \\
&\qquad -\sharp \{ D_i\,|\, D_i\text{ is a terminal component of }E - \widetilde{f}^{-1}_{\ast}(\widetilde{Z})_{\rm red.}\} \\
&\quad=-2. 
\end{align*}
In summary, we have $p_a(Z) = \frac{1}{2}(Z \cdot K_V+Z)+1 = 1>0$. 
This implies that $\pi (E) \in X$ is a singular point, which is not rational, by Lemma \ref{rational sing}. 
\end{proof}
In the above proof, we present some examples for the reader's convenience: 
\begin{eg}\label{eg(5-2)(1)}
We can construct a compactification $(V,C+D)$ of $\bC ^2$ such that $V$ is a smooth projective surface and the weighted dual graph of $C+D$ is as follows: 
\begin{align*}
\xygraph{
\circ ([]!{+(0,-.3)} {^{-3}}) ([]!{+(0,.2)} {^{D_1}}) -[r] \circ ([]!{+(0,.2)} {^{D_2}}) -[r] \circ ([]!{+(0,.2)} {^{D_4}}) (-[d] \circ ([]!{+(.3,0)} {^{D_3}}),-[r] \circ ([]!{+(0,.2)} {^{D_5}}) -[r] {\cdots} ([]!{+(0,-.4)} {\underbrace{\quad \qquad \qquad \qquad}_{6\text{-vertices}}}) -[r] \circ ([]!{+(0,.2)} {^{D_{10}}}) -[r] \circ ([]!{+(0,-.3)} {^{-4}}) ([]!{+(0,.2)} {^{D_{11}}}) -[r] \circ ([]!{+(0,.2)} {^{D_{14}}}) (-[d] \circ ([]!{+(.4,0)} {^{D_{13}}}) -[d] \circ ([]!{+(.4,0)} {^{D_{12}}}),-[r] \circ ([]!{+(0,.2)} {^{D_{15}}}) -[r] \circ ([]!{+(0,.2)} {^{D_{16}}}) -[r] \bullet ([]!{+(0,.2)} {^C}))))}
\end{align*}
Here, in the above graph, each $D_i$ is an irreducible component of $D$. 
Indeed, there exists a birational morphism $\nu :V \to \bF _3$ such that $(\bF _3,\nu _{\ast}(C+D))$ is a minimal normal compactification of $\bC ^2$ as in Lemma \ref{lem(3-2)}. 
By a straightforward calculation, we know that the determinant of the intersection matrix of $D$ is equal to $21$. 
Hence, there exists a contraction $\pi :V \to X$ such that $(X,\Gamma )$ is a minimal compactification of $\bC ^2$ by Lemma \ref{cont}, where $\Gamma := \pi (C)$. 
Let $D^{\sharp} = \sum _{i=1}^{16}\alpha _iD_i$ be the $\bQ$-divisor on $V$ such that $K_V + D^{\sharp} \equiv \pi ^{\ast}(K_X)$. 
Then coefficients of $D^{\sharp}$ are determined as follows: 
\begin{align*}
\alpha _1 = \frac{20}{7},\ 
&\alpha _2 = \frac{53}{7},\ 
\alpha _3 = \frac{43}{7},\ 
\alpha _i = \frac{126-10i}{7}\ (i=4,\dots ,11),\ \\
&\alpha _{12} = \alpha _{16} = \frac{8}{7},\ \alpha _{13} = \alpha _{15} = \frac{16}{7},\ \alpha _{14} = \frac{24}{7}. 
\end{align*}
Hence, $(X,\Gamma )$ satisfies the assumption of \S \S \ref{5-2} by Lemma \ref{numerical} because of $(C \cdot D^{\sharp}) = \alpha _{16} = \frac{8}{7}>1$. 
Let $\widetilde{f}:V \to \widetilde{V}$ be a sequence of contractions of $(-1)$-curves and subsequently (smoothly) contractible curves in $\Supp (D)$, starting with the contraction of $C$, such that $V \backslash \Supp (C+D) \simeq \widetilde{V} \backslash \Supp (\widetilde{f}_{\ast}(D))$ and the weighted dual graph of $\widetilde{f}_{\ast}(D)$ is as follows: 
\begin{align*}
\xygraph{
\circ ([]!{+(0,-.3)} {^{-3}}) ([]!{+(0,.3)} {^{\widetilde{D}_1}}) -[r] \circ ([]!{+(0,.3)} {^{\widetilde{D}_2}}) -[r] \circ ([]!{+(0,.3)} {^{\widetilde{D}_4}}) (-[d] \circ ([]!{+(.4,0)} {^{\widetilde{D}_3}}),-[r] \circ ([]!{+(0,.3)} {^{\widetilde{D}_5}}) -[r] {\cdots} ([]!{+(0,-.4)} {\underbrace{\quad \qquad \qquad \qquad}_{6\text{-vertices}}}) -[r] \circ ([]!{+(0,.3)} {^{\widetilde{D}_{10}}}) -[r] \circ ([]!{+(0,.3)} {^{\widetilde{D}_{11}}}) -[r] \bullet([]!{+(0,.3)} {^{\widetilde{C}}})))}
\end{align*}
Here, in the above graph, $\widetilde{C}$ is a unique $(-1)$-curve on $\Supp (\widetilde{f}_{\ast}(D))$ and each $\widetilde{D}_j$ is an irreducible component of $\widetilde{D} := \widetilde{f}_{\ast}(D) - \widetilde{C}$. 
Since $(\widetilde{V},\widetilde{C}+\widetilde{D})$ is the same as $(V,C+D)$ in Example \ref{eg(5-1)(1)} for $m=3$, we know that $(\widetilde{V},\widetilde{C}+\widetilde{D})$ satisfies the assumption for $(V,C+D)$ in \S \S \ref{5-1}. 
In particular, there exists a contraction $\widetilde{\pi}:\widetilde{V} \to \widetilde{X}$ of $\widetilde{D}$ such that $K_{\widetilde{X}} \equiv 0$. 
Let $\widetilde{D}^{\sharp} = \sum _{j=1}^{11}{\widetilde{\alpha}}_j\widetilde{D}_j$ be the $\bQ$-divisor on $\widetilde{V}$ such that $K_{\widetilde{V}} + \widetilde{D}^{\sharp} \equiv \widetilde{\pi}^{\ast}(K_{\widetilde{X}})$. 
By Example \ref{eg(5-1)(1)}, we then have $\widetilde{\alpha}_1 = 2$, $\widetilde{\alpha}_2 = 5$, $\widetilde{\alpha}_3 = 4$ and $\widetilde{\alpha}_j = 12-j$ $(j=4,\dots ,11)$. 
Now, we shall set the $\bZ$-divisor $Z = \sum _{i=1}^{16}x_iD_i$ on $V$ as ({\ref{Z}}). 
Noting that $D_j = \widetilde{f}_{\ast}^{-1}(\widetilde{D} _j )$ for $j=1,\dots ,11$, we have: 
\begin{align*}
x_1 = 2,\ x_2 = 5,\ x_3 = 4,\ x_j = 12-j\ (j=4,\dots ,11),\ x_{12} = \dots =x_{16} = 1. 
\end{align*}
Then we see that $\Supp (Z) = \Supp (D)$ and $p_a(Z) = 1$ by a straightforward calculation. 
\end{eg}
\begin{eg}\label{eg(5-2)(2)}
We can construct a compactification $(V,C+D)$ of $\bC ^2$ such that $V$ is a smooth projective surface and the weighted dual graph of $C+D$ is as follows: 
\begin{align*}
\xygraph{
\circ ([]!{+(0,.2)} {^{D_1}}) -[r] \circ ([]!{+(0,.2)} {^{D_2}}) -[r] \circ ([]!{+(0,.2)} {^{D_4}}) (-[d] \circ ([]!{+(.3,0)} {^{D_3}}),-[r] \circ ([]!{+(0,.2)} {^{D_5}}) -[r] {\cdots} ([]!{+(0,-.4)} {\underbrace{\quad \qquad \qquad \qquad}_{4\text{-vertices}}}) -[r] \circ ([]!{+(0,.2)} {^{D_8}}) -[r] \circ ([]!{+(0,-.3)} {^{-4}}) ([]!{+(0,.2)} {^{D_9}}) -[r] \circ ([]!{+(0,.2)} {^{D_{12}}}) -[r] \bullet ([]!{+(0,.2)} {^C}) -[r] \circ ([]!{+(0,-.3)} {^{-3}}) ([]!{+(0,.2)} {^{D_{11}}}) -[r] \circ ([]!{+(0,.2)} {^{D_{10}}}))}
\end{align*}
Here, in the above graph, each $D_i$ is an irreducible component of $D$. 
Indeed, there exists a birational morphism $\nu :V \to \bF _2$ such that $(\bF _2,\nu _{\ast}(C+D))$ is a minimal normal compactification of $\bC ^2$ as in Lemma \ref{lem(3-2)}. 
By a straightforward calculation, we know that the determinant of the intersection matrix of $D$ is equal to $3 \cdot 5 = 15$. 
Hence, there exists a contraction $\pi :V \to X$ such that $(X,\Gamma )$ is a minimal compactification of $\bC ^2$ by Lemma \ref{cont}, where $\Gamma := \pi (C)$. 
Let $D^{\sharp} = \sum _{i=1}^{12}\alpha _iD_i$ be the $\bQ$-divisor on $V$ such that $K_V + D^{\sharp} \equiv \pi ^{\ast}(K_X)$. 
Then coefficients of $D^{\sharp}$ are determined as follows: 
\begin{align*}
\alpha _1 = \frac{8}{3},\ 
\alpha _2 = \frac{16}{3},\ 
\alpha _3 = 4,\ 
\alpha _i = \frac{40-4i}{3}\ (i=4,\dots ,9),\ 
\alpha _{10} = \frac{1}{5},\ 
\alpha _{11} = \frac{2}{5},\ 
\alpha _{12} = \frac{2}{3}. 
\end{align*}
Hence, $(X,\Gamma )$ satisfies the assumption of \S \S \ref{5-2} by Lemma \ref{numerical} because of $(C \cdot D^{\sharp}) = \alpha _{12} + \alpha _{11} = \frac{2}{3} + \frac{2}{5} = \frac{16}{15}>1$. 
Let $\widetilde{f}:V \to \widetilde{V}$ be a sequence of contractions of $(-1)$-curves and subsequently (smoothly) contractible curves in $\Supp (D)$, starting with the contraction of $C$, such that $V \backslash \Supp (C+D) \simeq \widetilde{V} \backslash \Supp (\widetilde{f}_{\ast}(D))$ and the weighted dual graph of $\widetilde{f}_{\ast}(D)$ is as follows: 
\begin{align*}
\xygraph{
\circ ([]!{+(0,.3)} {^{\widetilde{D}_1}}) -[r] \circ ([]!{+(0,.3)} {^{\widetilde{D}_2}}) -[r] \circ ([]!{+(0,.3)} {^{\widetilde{D}_4}}) (-[d] \circ ([]!{+(.3,0)} {^{\widetilde{D}_3}}),-[r] \circ ([]!{+(0,.3)} {^{\widetilde{D}_5}}) -[r] {\cdots} ([]!{+(0,-.4)} {\underbrace{\quad \qquad \qquad \qquad}_{4\text{-vertices}}}) -[r] \circ ([]!{+(0,.3)} {^{\widetilde{D}_8}}) -[r] \circ ([]!{+(0,-.3)} {^{-3}}) ([]!{+(0,.3)} {^{\widetilde{D}_9}}) -[r] \bullet ([]!{+(0,.3)} {^{\widetilde{C}}}) -[r] \circ ([]!{+(0,.3)} {^{\widetilde{D}_{10}}}))}
\end{align*}
Here, in the above graph, $\widetilde{C}$ is a unique $(-1)$-curve on $\Supp (\widetilde{f}_{\ast}(D))$ and each $\widetilde{D}_j$ is an irreducible component of $\widetilde{D} := \widetilde{f}_{\ast}(D) - \widetilde{C}$. 
Since $(\widetilde{V},\widetilde{C}+\widetilde{D})$ is the same as $(V,C+D)$ in Example \ref{eg(5-1)(2)} for $m=3$, we know that $(\widetilde{V},\widetilde{C}+\widetilde{D})$ satisfies the assumption for $(V,C+D)$ in \S \S \ref{5-1}. 
In particular, there exists a contraction $\widetilde{\pi}:\widetilde{V} \to \widetilde{X}$ of $\widetilde{D}$ such that $K_{\widetilde{X}} \equiv 0$. 
Let $\widetilde{D}^{\sharp} = \sum _{j=1}^{10}{\widetilde{\alpha}}_j\widetilde{D}_j$ be the $\bQ$-divisor on $\widetilde{V}$ such that $K_{\widetilde{V}} + \widetilde{D}^{\sharp} \equiv \widetilde{\pi}^{\ast}(K_{\widetilde{X}})$. 
By Example \ref{eg(5-1)(2)}, we then have $\widetilde{\alpha}_1 = 2$, $\widetilde{\alpha}_2 = 4$, $\widetilde{\alpha}_3 = 3$ and $\widetilde{\alpha}_j = 10-j$ $(j=4,\dots ,9)$. 
Now, we shall set the $\bZ$-divisor $Z = \sum _{i=1}^{12}x_iD_i$ on $V$ as ({\ref{Z}}). 
Noting that $D_j = \widetilde{f}_{\ast}^{-1}(\widetilde{D} _j )$ for $j=1,\dots ,10$, we have: 
\begin{align*}
x_1 = 2,\ x_2 = 4,\ x_3 = 3,\ x_j = 10-j\ (j=4,\dots ,9),\ x_{10} = x_{11} = 0,\ x_{12} = 1. 
\end{align*}
Then we see that $\Supp (Z) = \Supp (E)$ and $p_a(Z) = 1$ by a straightforward calculation, where $E$ is the connected component of $D$ with a branching component. 
\end{eg}
\section{Further problems}\label{6}
By arguments in \S \ref{4} and \S \ref{5}, we infer that minimal compactifications of $\bC ^2$ with the numerically trivial canonical divisors are restrictive. 
Hence, we establish the following problem: 
\begin{prob}
Classify the minimal compactifications of the affine plane $\bC ^2$ with numerically trivial canonical divisors. 
\end{prob}
We also present the following problem: 
\begin{prob}
Let $S$ be a $\bQ$-homology plane with logarithmic Kodaira dimension $-\infty$ such that there exists a minimal compactification $(X,\Gamma )$ of $S$ (i.e., $X$ is a normal analytic surface and $\Gamma$ is an irreducible curve on $X$ such that $X \backslash \Gamma$ is biholomorphic to $S$). 
Assume that every singular point of $X$ is rational. 
Then when is the canonical divisor $K_X$ nef (especially, numerically trivial) ?
\end{prob}
Theorem \ref{main} implies that there is no minimal compactification $(X,\Gamma )$ of $S$ such that $X$ has at worse rational singularities and $K_X$ is nef, if $S=\bC ^2$. 
Meanwhile, we find the following example. 
\begin{eg}\label{eg(6-1)}
Let $S$ be the projective plane with an irreducible conic removed. 
It is well known that $S$ is a $\bQ$-homology plane with logarithmic Kodaira dimension $-\infty$. 
Letting $Q$ be a irreducible conic on $\bP ^2_{\bC}$, we consider a compactification $(\bP ^2_{\bC},Q)$ of $S$. 
Let $\nu :V \to \bP ^2_{\bC}$ be a the birational morphism such that $\nu ^{\ast}(Q)_{\rm red.} = C+D$, where $C$ is a $(-1)$-curve and the weighted dual graph of $C+D$ is as follows: 
\begin{align*}
\xygraph{
\circ ([]!{+(0,.2)} {^{D_2}}) -[r] \circ ([]!{+(0,.2)} {^{D_3}}) -[r] \circ ([]!{+(0,.2)} {^{D_4}}) -[r] \circ ([]!{+(0,.2)} {^{D_5}}) -[r] \circ ([]!{+(0,.2)} {^{D_6}}) -[r] \circ ([]!{+(0,.2)} {^{D_7}}) (-[d] \circ ([]!{+(.3,0)} {^{D_1}}),-[r] \circ ([]!{+(0,-.3)} {^{-3}}) ([]!{+(0,.2)} {^{D_8}}) -[r] \circ ([]!{+(0,.2)} {^{D_{10}}}) (-[d] \circ ([]!{+(.3,0)} {^{D_9}}),-[r] \circ ([]!{+(0,.2)} {^{D_{11}}}) -[r] \bullet ([]!{+(0,.2)} {^C})))}
\end{align*}
Here, in the above graph, each $D_i$ is an irreducible component of $D$, and $\nu _{\ast}^{-1}(Q) = D_1$. 
By a straightforward calculation, we know that the determinant of the intersection matrix of $D$ is equal to $16$. 
Hence, by Lemma, \ref{cont} there exists a contraction $\pi :V \to X$ of $D$ such that $(X,\pi _{\ast}(C))$ is a minimal compactification of $S$. 
Moreover, for any $\bZ$-divisor $Z = \sum _{i=1}^{11}x_iD_i$ with $\Supp (Z) = \Supp (D)$, we see $p_a(Z) \le 0$ (see Appendix \ref{7}). 
This implies that $\Sing (X)$ consists of only one singular point, which is rational, by Lemma \ref{rational sing}. 
Meanwhile, since the intersection matrix of $D$ is negative definite, there exists uniquely an effective $\bQ$-divisor $D^{\sharp}=\sum _{i=1}^{11} \alpha _iD_i$ on $V$ such that $(D_i \cdot K_V+D^{\sharp})=0$ for every irreducible component $D_i$ of $D$. 
Then we have: 
\begin{align*}
\alpha _1 = \frac{3}{2},\ 
\alpha _i = \frac{i-1}{2}\ (i=2,\dots ,7),\ 
\alpha _8 =2,\ 
\alpha _9 = \alpha _{11} =1,\ 
\alpha _{10} = 2. 
\end{align*}
Hence, $(C \cdot D^{\sharp}) = \alpha _{11} = 1$. 
Thus, $K_X$ is nef; more precisely, numerically trivial (cf.\ Lemma \ref{numerical}). 
In particular, $X$ is not a numerical del Pezzo surface. 
\end{eg}
\appendix
\section{A supplement to Example \ref{eg(6-1)}}\label{7}
Let $V$ be a smooth projective surface. 
Assume that there exists a reduced divisor $D = \sum _{i=1}^{11}D_i$ on $V$ such that any $D_i$ is a smooth rational curve and the weighted dual graph of $D$ is as follows: 
\begin{align*}
\xygraph{
\circ ([]!{+(0,.2)} {^{D_2}}) -[r] \circ ([]!{+(0,.2)} {^{D_3}}) -[r] \circ ([]!{+(0,.2)} {^{D_4}}) -[r] \circ ([]!{+(0,.2)} {^{D_5}}) -[r] \circ ([]!{+(0,.2)} {^{D_6}}) -[r] \circ ([]!{+(0,.2)} {^{D_7}}) (-[d] \circ ([]!{+(.3,0)} {^{D_1}}),-[r] \circ ([]!{+(0,-.3)} {^{-3}}) ([]!{+(0,.2)} {^{D_8}}) -[r] \circ ([]!{+(0,.2)} {^{D_{10}}}) (-[d] \circ ([]!{+(.3,0)} {^{D_9}}),-[r] \circ ([]!{+(0,.2)} {^{D_{11}}}) ))}
\end{align*}
Let $Z = \sum _{i=1}^{11}x_iD_i$ be an effective $\bZ$-divisor on $V$ such that $\Supp (Z) = \Supp (D)$. 
The purpose of this appendix is to prove that $p_a(Z)$ is a non-positive integer by direct calculation. 

For simplicity, we write: 
\begin{align*}
a := -\frac{1}{2}(2x_1&-x_7)^2,\ b := -x_2^2 -\sum _{i=2}^6(x_i-x_{i+1})^2,\ c := -\frac{1}{2}(x_7-2x_8)^2,\ \\
&d := -(x_8-x_{10})^2 -\frac{1}{2}(2x_9-x_{10})^2 -\frac{1}{2}(2x_{11}-x_{10})^2. 
\end{align*}
By the definitions, we note as follows: 
\begin{align*}
(Z \cdot K_V+Z) = a+b+c+d+x_8,\ a \le 0,\ b \le -x_2^2,\ c\le 0,\ d \le 0. 
\end{align*}

In order to show $p_a(Z) \le 0$, we shall prove $(Z \cdot K_V+Z) \le -2$. 
Suppose that $x_8 \ge 3$. 
Since: 
\begin{align*}
b+c + x_8 = -\sum _{i=2}^6\frac{i}{i-1}\left(x_i-\frac{i-1}{i}x_{i+1}\right)^2 -\frac{1}{6}(2x_7-3x_8)^2 + \frac{1}{2}x_8(2-x_8) \le \frac{1}{2}x_8(2-x_8)
\end{align*}
and $x_8 \ge 3$, we know $(Z \cdot K_V+Z) \le \frac{1}{2}x_8(2-x_8) \le -\frac{3}{2}$. 
This implies that $(Z \cdot K_V+Z) \le -2$ because $(Z \cdot K_V+Z)$ is an integer. 
Hence, we assume $x_8 \le 2$ in what follows. 
Suppose that $x_2 \ge 2$. 
Then we have $(Z \cdot K_V+Z) \le b + x_8 \le -x_2^2 + x_8 \le -2$. 
Hence, we further assume $x_2 = 1$. 
Then we notice: 
\begin{align*}
b = -x_2^2 - \sum _{i=2}^6(x_i-x_{i+1})^2 \le -1^2 -(x_7-1) = -x_7. 
\end{align*}
Now, we consider the two cases $x_8=2$ and $x_8=1$, separately. 

Assume that $x_8=2$. 
If $x_8=2$ and $x_7 \ge 4$, then we have $(Z \cdot K_V+Z) \le b +x_8 \le -4 + 2 = -2$. 
In what follows, we consider the case $x_7 \le 3$. 
Then we have $(Z \cdot K_V+Z) \le b+c +x_8 \le -x_7+c+x_8 \le -\frac{3}{2}$. 
This implies that $(Z \cdot K_V+Z) \le -2$ because $(Z \cdot K_V+Z)$ is an integer. 

Finally, assume that $x_8=1$. Then we note $d \le -1$. 
If $x_8=1$ and $x_7 \ge 2$, then we have $(Z \cdot K_V+Z) \le b+d +x_8 \le -2 -1 +1 = -2$. 
If $x_8=1$ and $x_7=1$, then we have $(Z \cdot K_V+Z) = a+b+c+d+x_8 \le -\frac{1}{2}(2x_1-1)^2 -x_7 -\frac{1}{2}(1-2)^2 -1 +x_8 \le -\frac{1}{2} -1 -\frac{1}{2} -1 + 1 = -2$. 

Hence, we thus obtain $p_a(Z) \le 0$. 

\end{document}